\newtheorem{theorem}{Theorem}
\newtheorem{lemma}{Lemma}
\newtheorem{proposition}{Proposition}
\theoremstyle{definition}
\newtheorem{example}{Example}
\newtheorem{remark}{Remark}
\newtheorem*{remark*}{Remark}
\newtheorem*{defn}{Definition}
\newcommand{\eqdef}{\stackrel{\scriptscriptstyle\rm def}{=}}
\def\bN{\mathbb{N}}
\def\bR{\mathbb{R}}
\def\cM{\EuScript{M}}
\def\cL{\EuScript{L}}
\def\ccL{\widehat\cL}
\DeclareMathSymbol{\varnothing}{\mathord}{AMSb}{"3F}
\renewcommand{\emptyset}{\varnothing}
\author{Katrin Gelfert} \address{Max-Planck-Institut f\"ur Physik komplexer
  Systeme, N\"othnitzer Str. 38, D-01187 Dresden \& Institut f\"ur Physik, TU
  Chemnitz, D-09107 Chemnitz, Germany}
\email{gelfert@pks.mpg.de}
\urladdr{http://www.pks.mpg.de/~gelfert}
\author{{Micha\l} Rams} \address{Institute of Mathematics, Polish Academy of Sciences, ul. \'{S}niadeckich 8, 00-956 Warszawa, Poland}
\email{m.rams@impan.gov.pl}
\urladdr{http://www.impan.gov.pl/~rams}
\begin{document}

\title[]{The Lyapunov spectrum of some parabolic systems}

\begin{abstract}
We study the Hausdorff dimension spectrum for Lyapunov exponents for a class of interval maps which includes several non-hyper\-bolic situations. We also analyze the level sets of points with given lower and upper Lyapunov exponents and, in particular,  with zero lower Lyapunov exponent. We prove that the level set of points with zero exponent has full Hausdorff dimension, but carries no topological entropy.
\end{abstract}

\begin{thanks}
{This research of K.\,G. was supported by the grant EU FP6 ToK SPADE2 and by the Deutsche Forschungsgemeinschaft.
The research of M.\,R. was supported by grants EU FP6 ToK SPADE2, EU FP6 RTN CODY and MNiSW grant 'Chaos, fraktale i dynamika konforemna'.}
\end{thanks}

\keywords{Lyapunov exponents, multifractal spectra, Hausdorff dimension, nonuniformly hyperbolic systems}
\subjclass[2000]{Primary: %
37E05, % Maps of the interval (piecewise continuous, continuous, smooth)
37D25, %Nonuniformly hyperbolic systems (Lyapunov exponents, Pesin theory, etc.)
%37D35, % Thermodynamic formalism, variational principles, equilibrium states 
%28D20, % Entropy and other invariants
37C45, % Dimension theory of dynamical systems
28D99 % Measure-theoretic ergodic theory
}
\maketitle

%--------------------------------------------------------------------------------------------------------
\section{Introduction}
%--------------------------------------------------------------------------------------------------------

Our goal here is to present results on the Lyapunov spectrum of interval maps with parabolic periodic points.
We are going to work in the following setting.

Let $f\colon I\to I$ be a map on some interval $I\subset\bR$ for which there is a partition $I=I_1\cup\ldots\cup I_\ell$ into sub-intervals such that $f|I_i$ is monotone and continuously differentiable for every $i$. Let $\Lambda\subset I$ be a compact $f$-invariant set such that $f|\Lambda$ is topologically conjugate to a topologically mixing subshift of finite type. 
%Let $f\colon I\to I$ be a piecewise monotone and piecewise continuously differentiable map on some interval $I\subset\bR$ and let $\Lambda\subset I$ be a compact $f$-invariant set such that $f|\Lambda$ is topologically conjugate to a topologically mixing subshift of finite type. 
Assume that $f|\Lambda$ satisfies the tempered distortion property (see Definition \ref{lem1} for the definition).
Let $(\Lambda_m)_m$ be an increasing family of compact $f$-invariant sets having the property that $f|\Lambda_m$ has bounded distortion and is uniformly expanding and topologically conjugate to subshifts of finite type, and that $\Lambda_m$ converges to $\Lambda$ in the Hausdorff topology.
%The main examples we have in mind are maps with parabolic points, in particular expansive Markov systems~\cite{GelRam:07}, parabolic iterated function systems~\cite{Urb:96}, and the Manneville-Pomeau maps~\cite{PomMan}.

Our goal is to study the spectrum of Lyapunov exponents of such systems.
Given $x\in\Lambda$ we denote by $\underline\chi(x)$ and $\overline\chi(x)$ the \emph{lower} and \emph{upper Lyapunov exponent} at $x$, respectively, 
\[
\underline\chi(x)\eqdef\liminf_{n\to\infty}\frac{1}{n}\log \lvert(f^n)'(x)\rvert\quad
\overline\chi(x)\eqdef\limsup_{n\to\infty}\frac{1}{n}\log \lvert(f^n)'(x)\rvert,
\] 
and if both values coincide then we call the common value the \emph{Lyapunov exponent} at $x$ and denote it by $\chi(x)$.
For given numbers $0\le \alpha\le\beta$ we consider the following level sets
\[
\cL(\alpha,\beta) \eqdef
 \{x\in \Lambda\colon \underline\chi(x)=\alpha, \, \overline\chi(x)=\beta\}.
\]
If $\alpha<\beta$ then $\cL(\alpha,\beta)$ is contained in the set of so-called \emph{irregular points}
\[
\cL_{\rm irr}\eqdef\left\{ x\in\Lambda\colon \underline\chi(x)<\overline\chi(x)\right\}.
\]
It follows from the Birkhoff ergodic theorem that then we have $\mu(\cL_{\rm irr})=0$ for any $f$-invariant probability measure $\mu$ supported on $\Lambda$. 
We denote by $\cL(\alpha)\eqdef \cL(\alpha,\alpha)$ the set of \emph{regular points} with exponent $\alpha$. 
Similarly, given $0\le\alpha\le\beta$, $\beta>0$ we will study
\[
\widehat\cL(\alpha,\beta) \eqdef
 \{x\in \Lambda\colon 0<\underline\chi(x)\leq\beta, \, \overline\chi(x)\geq\alpha\}.
\]
Recall that the continuous function $\log\,\lvert f'\rvert\colon\Lambda\to\bR$ is said to be \emph{cohomologous} to a constant if there exist a continuous function $\psi\colon\Lambda\to\bR$ and $c\in\bR$ such that $\log\,\lvert f'\rvert=\psi-\psi\circ f+c$ on $\Lambda$, which immediately implies that $\cL(c)=\Lambda$.
From the following considerations we will exclude this trivial case.

We want to determine the complexity of these sets in terms of their Hausdorff dimension $\dim_{\rm H}$. 
The multifractal analysis of dynamical systems, including  level sets of  more general local quantities than the Lyapunov exponents, are so far well understood only in the uniformly hyperbolic  case (see~\cite{Pes:96} for main results and further references). 
Nevertheless, we can mention several results beyond the hyperbolic setting. 
Nakaishi~\cite{Nak:00} studied Manneville-Pomeau-like maps and derived the Hausdorff dimension of the level sets $\cL(\alpha)$ for Lyapunov exponents $\alpha$ in the interior of the spectrum. 
Similar results for a different map was obtained by Kesseb\"ohmer and Stratmann~\cite{KesStr}.
% Hanus, Mauldin, and Urba\'nski~\cite{HanMauUrb:02}
%Pollicott and Weiss~\cite{PolWei:99} also address this problem by studying induced transformations, however they are not able to analyze the Lyapunov exponents of the original map. 
%\footnote{do we want to speak about inducing? --- no, actually I (we) don't}

In many approaches to a multifractal analysis  of such level sets one  
characterizes their dimension (or their entropy) in terms of a conditional variational principle of dimension (or entropies) of measures.
We prefer instead a description which involves  the Legendre-Fenchel transform of the pressure function.
To start with our general scheme, it would be desirable to obtain in the above setting a formula for the dimension spectrum of the Birkhoff averages of a \emph{general} continuous (or H\"older continuous) potential $\varphi\colon\Lambda\to\bR$, that is,  to prove for suitable values $\beta$ for example that
\[
\dim_{\rm H}\left\{ x\colon \lim_{n\to\infty}\frac{1}{n} \Big(\varphi(x)+\ldots+\varphi(f^{n-1}(x))\Big) = \beta\right\}
=\frac{1}{\beta} \sup_{d\in\bR}\left(d\beta - P(d\varphi)\right),
\] 
generalizing nowadays classical results (see, e.g.~\cite{BarSau:01}, where, however, the only considered values are in the \emph{interior} of the interval of all the possible  averages $\beta$). 
In the present paper we will investigate the particular case of the potential $\varphi = -\log\lvert f'\rvert$.
Let us denote
\begin{equation}\label{def:Fa}
F(\alpha) \eqdef
\frac{1}{\alpha} \inf_{d\in\bR} \left(P(-d\log\lvert f'\rvert)+\alpha d \right) 
\end{equation}
and let
\begin{equation}\label{def:Fb}
F(0) \eqdef \lim_{\alpha\to 0+} F(\alpha) 
= d_0,
\end{equation}
where 
\[
d_0\eqdef \inf\{d\colon P(-d\log\lvert f'\rvert )=0\}
\]
(see Section~\ref{sec:press} for fundamental properties of $F$).

The following is our first main result.

\begin{theorem} \label{thm:1}
Under the conditions above, for all $0\le \alpha\le \beta$, $\beta>0$, for
which $\ccL(\alpha, \beta)$, $\cL(\alpha, \beta)$ are nonempty we have
\[
\dim_{\rm H} \ccL(\alpha, \beta) = \max_{\alpha\le q\le \beta} F(q)
\quad \text{ and }\quad
\dim_{\rm H} \cL(\alpha, \beta) = \min_{\alpha\le q\le \beta} F(q).
\]
\end{theorem}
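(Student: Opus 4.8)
Recall from Section~\ref{sec:press} that $F$ is non-increasing, so $\max_{\alpha\le q\le\beta}F(q)=F(\alpha)$ and $\min_{\alpha\le q\le\beta}F(q)=F(\beta)$; that $qF(q)=\inf_{d\in\bR}\bigl(P(-d\log\lvert f'\rvert)+qd\bigr)$; and that, writing $N_n(q,\epsilon)$ for the number of level-$n$ cylinders $C$ with $\bigl\lvert\tfrac1n\log\lvert(f^n)'(x)\rvert-q\bigr\rvert<\epsilon$ for all $x\in C$, the definition of topological pressure gives
\[
\limsup_{n\to\infty}\tfrac1n\log N_n(q,\epsilon)\le qF(q)+o_\epsilon(1).
\]
Two elementary observations are used throughout. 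First, since $\log\lvert f'\rvert$ is bounded on the compact set $\Lambda$, the increments of $n\mapsto\tfrac1n\log\lvert(f^n)'(x)\rvert$ tend to $0$, so the set of accumulation points of this sequence is the full interval $[\underline\chi(x),\overline\chi(x)]$; thus every $x\in\cL(\alpha,\beta)$ has $\tfrac1n\log\lvert(f^n)'(x)\rvert$ within $\epsilon$ of any prescribed $q\in[\alpha,\beta]$ for infinitely many $n$, and every $x\in\ccL(\alpha,\beta)$ has it within $\epsilon$ of $[\alpha,\beta]$ for infinitely many $n$. Second, tempered distortion makes $\tfrac1n\log\lvert(f^n)'\rvert$ vary by only $o(1)$ on each level-$n$ cylinder, so a level-$n$ cylinder on which this average is $\approx q>0$ has diameter $\approx e^{-nq}$.

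For the upper bounds I cover at well-chosen scales. To bound $\dim_{\rm H}\cL(\alpha,\beta)$: for each $N$ it is covered, by the first observation (with $q=\beta$), by the level-$n$ cylinders with $n\ge N$ on which the average is within $2\epsilon$ of $\beta$; since $\beta>0$ these have diameter $\approx e^{-n\beta}\to0$, and by the counting bound the sum of $(\text{diameter})^s$ over all of them tends to $0$ as $N\to\infty$ once $s>F(\beta)$, so $\dim_{\rm H}\cL(\alpha,\beta)\le F(\beta)=\min_{[\alpha,\beta]}F$. Covering $\ccL(\alpha,\beta)$ at all scales where the average lies within $2\epsilon$ of $[\alpha,\beta]$ gives, by the same estimate and monotonicity of $F$, $\dim_{\rm H}\ccL(\alpha,\beta)\le F(\alpha-2\epsilon)\to F(\alpha)$ as $\epsilon\to0$; when $\alpha=0$ one first writes $\ccL(0,\beta)=\bigcup_{k\ge1}\{x:1/k\le\underline\chi(x)\le\beta\}$, covers the $k$-th piece at scales where the average is $\approx\underline\chi(x)\in[1/k,\beta]$, obtaining the bound $F(1/k)$, and lets $k\to\infty$ using continuity of $F$ at $0$.

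For the lower bound on $\ccL(\alpha,\beta)$: since $\cL(q)\subset\ccL(\alpha,\beta)$ whenever $q\in[\alpha,\beta]$ and $q>0$, and $\cL(q)\supset\{x\in\Lambda_m:\chi(x)=q\}$, whose Hausdorff dimension inside the uniformly hyperbolic conformal repeller $\Lambda_m$ equals the analogue $F_m(q)$ of $F$ formed from the pressure of $f|\Lambda_m$ (classical multifractal formalism), and $F_m(q)\to F(q)$ as $m\to\infty$, one gets $\dim_{\rm H}\cL(q)\ge F(q)$; the supremum over such $q$, with continuity of $F$, yields $\dim_{\rm H}\ccL(\alpha,\beta)\ge F(\alpha)$. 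The remaining inequality $\dim_{\rm H}\cL(\alpha,\beta)\ge F(\beta)$ is the heart of the proof, and I would obtain it by a concatenation construction. Fix $\epsilon>0$; for $m$ large choose an ergodic $\nu^+$ on $\Lambda_m$ with $\chi(\nu^+)\approx\beta$ and $h(\nu^+)/\chi(\nu^+)\approx F(\beta)$, and, if $\alpha>0$, an ergodic $\nu^-$ on $\Lambda_m$ with $\chi(\nu^-)\approx\alpha$ and $h(\nu^-)/\chi(\nu^-)\ge F(\beta)$ (possible since $F(\alpha)\ge F(\beta)$); if $\alpha=0$, use instead long orbit segments staying near the parabolic point, whose associated cylinder has diameter and accumulated expansion only subexponential in the block length. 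Build a Cantor set $Z\subset\Lambda$ by concatenating long $\nu^+$-generic blocks with occasional much longer ``steering'' blocks --- $\nu^-$-generic, respectively near-parabolic --- joined by bounded connecting words from topological mixing, with block lengths growing so fast that each block dwarfs everything preceding it. Then the steering blocks drag $\tfrac1n\log\lvert(f^n)'\rvert$ arbitrarily close to $\alpha$ along a subsequence while the $\nu^+$-blocks keep it $\approx\beta$, so $\underline\chi=\alpha$ and $\overline\chi=\beta$ on $Z$ and no average leaves $[\alpha-o(1),\beta+o(1)]$. Equipping $Z$ with the measure distributing mass according to $\nu^\pm$ inside the generic blocks (and by a single branch on the near-parabolic ones), one checks that at every scale $\log(\text{mass})/\log(\text{diameter})$ is, up to $o(1)$, a weighted average of $h(\nu^+)/\chi(\nu^+)\approx F(\beta)$ and $h(\nu^-)/\chi(\nu^-)\ge F(\beta)$ (the near-parabolic blocks contributing negligibly to numerator and denominator), hence $\ge F(\beta)-\epsilon$, and it equals $F(\beta)+o(1)$ at the ends of the dominating $\nu^+$-blocks; since Hausdorff dimension is controlled by the \emph{lower} local dimension, the mass distribution principle gives $\dim_{\rm H}Z\ge F(\beta)-\epsilon$, and $\epsilon\to0$ concludes.

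The principal obstacle is this last construction. One must choose the schedule of block lengths so that the running average of $\log\lvert f'\rvert$ genuinely realises $\liminf=\alpha$ and $\limsup=\beta$ and nothing outside $[\alpha,\beta]$, while the bounded connecting words and the initial transient contribute only $o(n)$, and one must control the lower local dimension of the constructed measure \emph{uniformly over all scales}, including those interior to the steering blocks, not just at block boundaries. The case $\alpha=0$ is the most delicate: it hinges on the precise fact that the cylinder of points remaining near the parabolic point for $\ell$ steps has both diameter and accumulated expansion subexponential in $\ell$, so that inserting long such blocks collapses the running average to $0$ without damaging the local dimension. All of this rests on the tempered distortion hypothesis, but assembling it into a uniform estimate is where the genuine work lies.
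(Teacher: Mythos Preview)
Your opening claim that $F$ is non-increasing is false in the generality of the paper. Section~\ref{sec:press} only asserts that $F$ is \emph{concave}; the non-increasing behaviour is special to the parabolic case (it is used in the paper only in the proof of Theorem~\ref{thm:2}, under the hypothesis $\cL(0)\neq\emptyset$). In the uniformly hyperbolic case sketched in Figure~1, $F$ has an interior maximum at $\dim_{\rm H}\Lambda$ and decreases on both sides. Consequently your identifications $\max_{[\alpha,\beta]}F=F(\alpha)$ and $\min_{[\alpha,\beta]}F=F(\beta)$ are unjustified, and the argument as written does not prove the theorem.

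This error is not merely cosmetic. Your covering bound for $\cL(\alpha,\beta)$ is carried out only at $q=\beta$, yielding $\dim_{\rm H}\cL(\alpha,\beta)\le F(\beta)$; when $F(\alpha)<F(\beta)$ this is strictly weaker than the required $\min_{[\alpha,\beta]}F=F(\alpha)$. The fix is easy: your ``accumulation-set'' observation gives infinitely many $n$ with average near \emph{any} $q\in[\alpha,\beta]$, so run the same cover at every such $q$ and take the infimum. Likewise, your bound for $\ccL$ should read $\max_{[\alpha,\beta]}F$ rather than $F(\alpha)$. More seriously, your lower-bound construction for $\cL(\alpha,\beta)$ requires a measure $\nu^-$ with $\chi(\nu^-)\approx\alpha$ and $h(\nu^-)/\chi(\nu^-)\ge F(\beta)$; when $F(\alpha)<F(\beta)$ no such measure exists, since $F(\alpha)$ is by definition the supremum of $h(\nu)/\chi(\nu)$ over measures with $\chi(\nu)=\alpha$. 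The correct target is $\min\{F(\alpha),F(\beta)\}$, which by concavity equals $\min_{[\alpha,\beta]}F$; then one can always choose $\nu^+$, $\nu^-$ with local dimension at least this value.

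Once this is repaired, your overall strategy is close to the paper's. The paper obtains the upper bounds (Proposition~\ref{prop:dimH}) via conformal measures $\nu_d$ rather than your direct cylinder-counting cover; your route is more elementary and avoids invoking the existence of conformal measures on the possibly non-hyperbolic $\Lambda$, which is a genuine simplification. For the lower bounds the paper formalises exactly your concatenation idea as the ``w-measure'' of Section~\ref{sec:wme}: a measure built by distributing mass according to a sequence of Gibbs equilibrium states $\mu_i$ on the hyperbolic approximants $\Lambda_{n_i}$, with block lengths $m_i$ growing fast. Proposition~\ref{prop:lim} gives precisely the uniform control of $H_m(x)/L_m(x)$ at all scales that you flag as ``the principal obstacle'', so your instinct about where the work lies is correct; the paper's inequalities \eqref{ki2}, \eqref{res1}, \eqref{res2} are what make this rigorous. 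One point you do not address: the case $\alpha=\alpha^-$ or $\beta=\alpha^+$ (boundary of the spectrum) requires that the approximating equilibrium states have exponents only \emph{converging} to $\alpha$, $\beta$, since no measure on any $\Lambda_m$ need realise the extremal exponent exactly; the paper handles this through Lemma~\ref{lem:apm} and Lemma~\ref{lem:Fcon}.
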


The above formulas extend what is known in the hyperbolic setting in several aspects. 
First of all, it applies to several non-hyperbolic situations. 
Second, we are able to cover the boundary points of the Lyapunov spectrum (see~\cite{TakVer:03,PfiSul:07} for related results in the case of the topological entropy of level sets).
Finally, we give a description of the dimension of level sets containing irregular points with zero lower  Lyapunov exponent.
It generalizes results of Barreira and Schmeling \cite{BarSch:00}.

Of particular interest is the set $\cL(0)$. If $f|\Lambda$ satisfies the specification property, then  the entropy spectrum of Birkhoff averages of general continuous potentials have been studied in~\cite{TakVer:03} using a different approach, see also~\cite{PfiSul:07}. 
For such a system, the vanishing of the entropy $h_{\rm top}(f|\cL(0))$ as stated below  follows in fact from~\cite[Theorem 3.5]{TakVer:03} and the Ruelle inequality. 
(Here we note that $\cL(0)$ need not to be compact, and we are using the notion of topological entropy on non-compact sets introduced by Bowen, see Section~\ref{sec:ent}).%\footnote{sorry -- I changed it back -- we did so far not yet speak about entropy, which, in fact, is only a side-result. ...}
On the other hand, in terms of Hausdorff dimension $\cL(0)$ is a rather large set.

\begin{theorem} \label{thm:2}
If $\cL(0)$ is nonempty then we have
\[
\dim_{\rm H} \cL(0) =
 \dim_{\rm H} \Lambda \geq F(0)
\quad\text{ and }\quad
h_{\rm top}(f|\cL(0))=0.
\]
\end{theorem}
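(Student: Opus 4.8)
The plan is to prove the three assertions in turn: first the lower bound $\dim_{\rm H}\cL(0)\geq\dim_{\rm H}\Lambda$ (the reverse inequality being trivial since $\cL(0)\subset\Lambda$), then the vanishing of the topological entropy, and finally the observation that $\dim_{\rm H}\Lambda\geq F(0)$.

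For the dimension lower bound I would build a ``$w$-measure'' (Moran-type, or push-down of a Bernoulli-like measure on the symbolic side) supported on a large subset of $\cL(0)$ and apply the mass distribution principle. The idea is the standard one for constructing irregular sets with full dimension: approximate $\Lambda$ by the uniformly expanding pieces $\Lambda_m$, choose for each $m$ an equilibrium-type measure $\mu_m$ on $\Lambda_m$ whose dimension $\dim_{\rm H}\mu_m$ is close to $\dim_{\rm H}\Lambda_m$ (here one uses that $\dim_{\rm H}\Lambda_m\to\dim_{\rm H}\Lambda$, which follows from $\Lambda_m\to\Lambda$ together with the pressure/variational properties collected in Section \ref{sec:press}), and then concatenate longer and longer orbit blocks coming from the $\mu_m$'s along a very sparse sequence of scales. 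Between these blocks one inserts excursions near the parabolic periodic point(s) of growing length so as to force the Birkhoff average $\frac1n\log\lvert(f^n)'(x)\rvert$ to return infinitely often arbitrarily close to $0$; since the parabolic orbit has zero Lyapunov exponent, such excursions cost no dimension asymptotically (tempered distortion is exactly what makes the distortion along these long excursions subexponential, so the local dimension of the constructed measure is not damaged). Choosing the block lengths increasing fast enough, every point in the support has $\underline\chi(x)=0$; choosing the interspersed expanding blocks dominant enough, the local dimension of $w$ at such points is at least $\sup_m\dim_{\rm H}\Lambda_m - \varepsilon = \dim_{\rm H}\Lambda-\varepsilon$. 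The mass distribution principle then gives $\dim_{\rm H}\cL(0)\geq\dim_{\rm H}\Lambda-\varepsilon$ for every $\varepsilon>0$.

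For the entropy statement I would argue that $h_{\rm top}(f|\cL(0))=0$ using the Bowen definition of topological entropy on noncompact sets, together with the Ruelle inequality $h_\mu(f)\leq\int\chi\,d\mu$ (valid for the relevant invariant measures on $\Lambda$ by the tempered distortion hypothesis). Any point $x\in\cL(0)$ has $\underline\chi(x)=0$; so along a subsequence $n_k$ the empirical measures $\frac1{n_k}\sum_{j=0}^{n_k-1}\delta_{f^j(x)}$ have all their weak-$*$ limit points $\nu$ satisfying $\int\log\lvert f'\rvert\,d\nu=0$, hence by Ruelle $h_\nu(f)=0$. Bowen's theory gives that the topological entropy of the set of points whose empirical measures accumulate only on measures of entropy $0$ is itself $0$; more precisely one covers $\cL(0)$ by the sets of generic points for measures in the (weak-$*$ compact, convex) set $\{\nu: \int\log\lvert f'\rvert\,d\nu=0\}$, applies the variational-type upper bound $h_{\rm top}(f|G_\nu)\leq h_\nu(f)$ for the set $G_\nu$ of $\nu$-generic points, and concludes. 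Alternatively, as remarked in the text, if $f|\Lambda$ has specification one may quote \cite[Theorem 3.5]{TakVer:03} directly; but I would prefer the self-contained Ruelle-inequality argument to avoid assuming specification.

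Finally, $\dim_{\rm H}\Lambda\geq F(0)$ is essentially by definition of $F(0)=d_0$ together with Bowen's formula: for each $m$, $\dim_{\rm H}\Lambda_m$ is the unique root of $P_{\Lambda_m}(-d\log\lvert f'\rvert)=0$, and letting $m\to\infty$ the pressure functions $P_{\Lambda_m}$ increase to $P_\Lambda$ (again from Section \ref{sec:press}), so their roots converge up to $d_0=F(0)$; since $\dim_{\rm H}\Lambda\geq\sup_m\dim_{\rm H}\Lambda_m$ this gives the claim. The main obstacle is the dimension lower bound: one must carefully balance three competing scales --- the lengths of the expanding blocks (which carry the dimension), the lengths of the parabolic excursions (which kill the lower exponent), and the transition scales (which must be sparse enough that the distortion picked up there, controlled only by the \emph{tempered} distortion property rather than bounded distortion, is negligible in the exponential count). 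Getting the local dimension estimate of the constructed measure uniform along \emph{all} scales $r\to 0$, not merely along a subsequence, is the delicate point, and is where the tempered distortion hypothesis does the real work.
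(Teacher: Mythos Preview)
Your dimension argument has a real gap. The $w$-measure you describe --- concatenating $\mu_m$-typical blocks (with $\dim_{\rm H}\mu_m$ close to $\dim_{\rm H}\Lambda_m$) interspersed with parabolic excursions --- yields points with $\underline\chi(x)=0$ but not $\overline\chi(x)=0$: if the expanding blocks are ``dominant enough'' to carry the dimension, they force $\overline\chi(x)>0$ along the times at which they end, so the support lies in $\{\underline\chi=0\}$, not in $\cL(0)=\{\chi=0\}$. Even if this is repaired (letting the parabolic excursions dominate in length while checking that the ratio $H_m/L_m$ survives), the construction uses only measures supported on the $\Lambda_m$, so the best it can give is $\dim_{\rm H}\cL(0)\ge\sup_m\dim_{\rm H}\Lambda_m=F(0)$: the roots of $P_m(\varphi_d)=0$ increase to $d_0=F(0)$, not to $\dim_{\rm H}\Lambda$. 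Your claimed identity $\sup_m\dim_{\rm H}\Lambda_m=\dim_{\rm H}\Lambda$ is precisely what is \emph{not} established here --- the paper asserts only $\dim_{\rm H}\Lambda\ge F(0)$ (Lemma~\ref{lem:dim} and the statement of Theorem~\ref{thm:2}), with equality noted as a separate observation for the two specific example classes. (As a side remark, the general hypotheses do not assume a parabolic periodic point exists, only that $\cL(0)\neq\emptyset$.)

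The paper's route is different and sidesteps this obstacle: instead of pushing the lower bound on $\dim_{\rm H}\cL(0)$ all the way to $\dim_{\rm H}\Lambda$, it bounds the dimension of the \emph{complement}. Writing $\Lambda\setminus\cL(0)=\ccL(0,\alpha^+)\cup\bigcup_{n}\ccL(2^{-n}\alpha^+)$ and applying the conformal-measure upper bounds (Propositions~\ref{prop:dimH} and~\ref{prop:ga}, using that $F$ is non-increasing when $\cL(0)\ne\emptyset$) gives $\dim_{\rm H}(\Lambda\setminus\cL(0))\le F(0)$. Combined with the $w$-measure lower bound $\dim_{\rm H}\cL(0)\ge F(0)$ (Proposition~\ref{prop:ha}), one obtains $\dim_{\rm H}\Lambda=\max\{\dim_{\rm H}\cL(0),\dim_{\rm H}(\Lambda\setminus\cL(0))\}=\dim_{\rm H}\cL(0)$ directly. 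For the entropy, your Ruelle-plus-Bowen approach is a plausible alternative, but the paper gives a more elementary self-contained covering argument (Proposition~\ref{pro:ent}): since every $x\in\cL(0)$ eventually satisfies $\lvert\Delta_n(x)\rvert\ge(1+\varepsilon)^{-n}$, at most $O((1+\varepsilon)^n)$ level-$n$ cylinders are needed to cover the relevant set, whence $h_{\rm top}(f|\cL(0))\le\log(1+\varepsilon)$ for every $\varepsilon>0$.
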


We now sketch the exposition of our paper.  In Section~\ref{sec:pre} we review several concepts and results from ergodic theory. 
In Section~\ref{sec:hypsub} we analyze the main properties of the hyperbolic sub-systems which we are going to consider. Upper bounds for the dimension are studied in Section~\ref{sec:upp}. 
Section~\ref{sec:low} is devoted to the analysis of lower dimension bounds: for the set of regular points with an exponent from the interior of the spectrum such bounds simply follow from the maximal lower bound for the corresponding hyperbolic sub-systems. 
In order to handle exponents at the boundary of the spectrum as well as a set of irregular points, we introduce the concept of a w-measure as the main tool of our analysis.
The proofs of Theorems~\ref{thm:1} and~\ref{thm:2} are given at the end of Section~\ref{sec:low} and in Section~\ref{sec:ent}.

%--------------------------------------------------------------------------------------------------------
\section{Preliminaries}\label{sec:pre}
%------------------------------------------------------------------------------------

%------------------------------------------------------------------------------------
\subsection{Examples}
%-------------------------------------------------------------------------------------

Before we collect some examples, let us introduce some notation. 
Consider the topological Markov chain $\sigma\colon \Sigma\to\Sigma$ defined by $\sigma(i_1i_2\ldots)=(i_2i_3\ldots)$ on the set 
\[
\Sigma
\eqdef\{1,\ldots,p\}^\bN
\]
The inverse branches of $\sigma$ will be denoted by $\sigma_i$.
We denote $\Sigma_n=\{1,\ldots,p\}^n$ and $\Sigma_*=\bigcup_{n=0}^\infty \Sigma_n$, where we use the convention $\Sigma_0=\{\emptyset\}$.

We will assume that $f|\Lambda$ is topologically conjugate to a topologically mixing subshift of finite type $(\Sigma_A,\sigma)\subset (\Sigma, \sigma)$.

Let $I_1$, $\ldots$, $I_p$ be a family of compact subintervals of $I$ with pairwise disjoint interiors and assume that $g_i(I)\subset I_i$, where $g_i$ is the inverse branch of $f$, conjugate to $\sigma_i$.
For each $(i_1\ldots i_n)\in\Sigma_n$ we define
\[
\Delta_{i_1\ldots i_n}\eqdef  g_{i_1\ldots i_{n-1}}(I_{i_n})
\]
and $\Delta_\emptyset=I$. Given $x\in \Lambda$, we denote a cylinder $\Delta_{i_1  \ldots i_n}$ containing $x$ also by $\Delta_n(x)$.
% \marginpar{-- better --?.} 

Our standing assumption is the tempered distortion property of $f$:

\begin{defn}\label{lem1}
	The map $f$ has \emph{tempered distortion} on $\Lambda$ if there exists a positive sequence $(\rho_n)_n$ decreasing to $0$ such that for every $n$ we have
	\begin{equation}\label{t35}
  	\sup_{(i_1\ldots i_n)}\sup_{x,y\in\Delta_{i_1\ldots i_n}}
  	\frac{\lvert (f^n)'(x)\rvert}{\lvert(f^n)'(y)\rvert} 
	\le e^{n\rho_n}.
	\end{equation}
\end{defn}

We say that $f$ is \emph{uniformly expanding} or \emph{uniformly hyperbolic} on an $f$-invariant compact set $K\subset\Lambda$ if there exists $c>0$ and $\lambda>1$ such that $\lvert (f^n)'\rvert\ge c \lambda^n$ everywhere on $K$. 
There are two main classes of (nonuniformly hyperbolic) examples we can work with.
The first class is closely related to parabolic Cantor sets, introduced in~\cite{Urb:96}.

\begin{example}[parabolic IFS]
Assume that $\lvert f'\rvert>1$ everywhere except a finite set of fixed points $p_i$ where $\lvert f'(p_i)\rvert=1$.
Assume also that $f$ is $C^{1+s}$ for some positive $s$.
We construct the subsystems $\Lambda_m$ by removing some small cylinder neighborhoods of parabolic points and all their pre-images.
Those subsystems are hyperbolic and have bounded distortion.

This class of examples contains for example the celebrated Manneville-Pomeau maps~\cite{PomMan}: $f\colon[0,1]\to[0,1]\colon x\mapsto x(1+x^s)\mod 1$, $s>0$.
\end{example}

\begin{remark}
Strictly speaking, the Manneville-Pomeau map is not conjugated to a subshift of finite type (some cylinders are only essentially disjoint, thus there exists a countable family of points belonging to two different cylinders of the same level).
We will allow this situation, our proofs work in this case as well without major changes.
\end{remark}

The second class is related to the one introduced in~\cite{GelRam:07}.

\begin{example}[expansive Markov systems]
Consider less restrictive assumptions about $f$, demanding only that 
\[
\lim_{n\to\infty} \max_{i_1\ldots i_n}\, \lvert \Delta_{i_1\ldots i_n}\rvert =0 .
\]
Assume also that  $f$ is piecewise $C^2$.
The subsystems $\Lambda_m$ are constructed like in the previous case.
Their hyperbolicity and bounded distortion property follows from the Ma\~{n}\'e hyperbolicity theorem, see~\cite{MelStr:93} for the reference.
\end{example}

\begin{remark}
For both the above-mentioned classes of examples we have equality in the assertion of Theorem \ref{thm:2}.
\end{remark}

%--------------------------------------------------------------------------------------------------------
\subsection{Topological pressure}\label{sec:press}
%--------------------------------------------------------------------------------------------------------

Let $\varphi$ be a continuous function on $\Lambda$. The \emph{topological pressure} of $\varphi$ (with respect to $f|\Lambda$) is defined by
\begin{equation}\label{wurm}
  P(\varphi) \eqdef
  \lim_{n\to\infty}\frac{1}{n} \log \sum_{(i_1\ldots i_n)}
      \exp \max_{x\in\Delta_{i_1\ldots i_n}} S_n\varphi(x),
\end{equation}
where here and in the sequel the sum is taken over the cylinders with non-empty intersection with the set $\Lambda$.
The existence of the limit follows easily from the fact that the sum constitutes a sub-multiplicative sequence. 
Moreover, the value $P(\varphi)$ does not depend on the particular Markov partition that we use in its definition.  

Denote by $\cM(\Lambda)$ the family of $f$-invariant Borel probability measures on $\Lambda$. 
We simply write $\cM=\cM(\Lambda)$ if there is no confusion about the system.
By the variational principle we have
\begin{equation}\label{wirm}
 P(\varphi)= 
      \max_{\mu\in\cM}\left( h_\mu(f)+\int_{\Lambda} \varphi d\mu\right),
\end{equation}
where $h_\mu(f)$ denotes the entropy of $f$ with respect to $\mu$ (see~\cite{Wal:81}). 
A measure $\mu\in\cM$ is called \emph{equilibrium state} for the potential $\varphi$ if
\[
P(\varphi)=h_\mu(f)+\int_\Lambda\varphi\,d\mu.
\]

Given $d\in\bR$, we define the function $\varphi_d\colon\Lambda_Q\to\bR$ by
\begin{equation}\label{varphit}
        \varphi_d(x) \eqdef
         -d\log\lvert f'(x)\rvert.
\end{equation}
The tempered distortion property~\eqref{t35} ensures in particular that in the definition of $P(\varphi_d)$ in~\eqref{wurm} one can replace the maximum by the minimum or, in fact, by any intermediate value. 

\begin{proposition}
  The function $d\mapsto P(\varphi_d)$ is a continuous, convex, and non-increasing function of $\bR$. 
$P(\varphi_d)$ is negative for large $d$ if and only if there exist no $f$-invariant probability measures with zero Lyapunov exponent.   
\end{proposition}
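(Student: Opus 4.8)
The plan is to establish the three properties of $d\mapsto P(\varphi_d)$ in turn, using only the variational principle~\eqref{wirm} and the tempered distortion property, and then to analyze the sign of $P(\varphi_d)$ for large $d$ via the same variational characterization.

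First, I would prove convexity. Write $P(\varphi_d)=\max_{\mu\in\cM}\bigl(h_\mu(f)-d\int_\Lambda\log\lvert f'\rvert\,d\mu\bigr)$. For fixed $\mu$ the map $d\mapsto h_\mu(f)-d\int\log\lvert f'\rvert\,d\mu$ is affine in $d$, hence convex; and a pointwise supremum of convex (affine) functions is convex. This also immediately gives that $d\mapsto P(\varphi_d)$ is a supremum of a family of affine functions, so it is finite and convex on all of $\bR$ provided it is finite somewhere, which it is since the integrals $\int\log\lvert f'\rvert\,d\mu$ are bounded (here the only subtlety is that $\log\lvert f'\rvert$ need not be bounded below near parabolic points; but the tempered distortion property~\eqref{t35} gives a uniform control $\frac1n S_n\log\lvert f'\rvert \le \frac1n\log\max_{i_1\ldots i_n}\sup|(f^n)'| + \rho_n$ on cylinders, and one checks the defining limit in~\eqref{wurm} stays finite). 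Monotonicity is equally direct: for $d_1\le d_2$ and any $\mu$, since $\int\log\lvert f'\rvert\,d\mu\ge 0$ (the exponent of an invariant measure is non-negative, as $f$ is uniformly expanding on the dense subsystems $\Lambda_m$, or directly from~\eqref{t35}), we get $h_\mu(f)-d_2\int\log\lvert f'\rvert\,d\mu\le h_\mu(f)-d_1\int\log\lvert f'\rvert\,d\mu$, and taking the supremum over $\mu$ yields $P(\varphi_{d_2})\le P(\varphi_{d_1})$. Continuity then comes for free: a finite convex function on $\bR$ is automatically continuous.

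For the last assertion, observe $P(\varphi_d)=\max_\mu\bigl(h_\mu(f)-d\chi(\mu)\bigr)$ where $\chi(\mu)\eqdef\int\log\lvert f'\rvert\,d\mu\ge 0$. If there exists $\mu_0\in\cM$ with $\chi(\mu_0)=0$, then $P(\varphi_d)\ge h_{\mu_0}(f)-d\cdot 0=h_{\mu_0}(f)\ge 0$ for every $d$, so $P(\varphi_d)$ is never negative. Conversely, suppose every $\mu\in\cM$ has $\chi(\mu)>0$. One wants $\inf_{\mu\in\cM}\chi(\mu)=:c>0$; this follows because $\cM$ is weak-$*$ compact and $\mu\mapsto\chi(\mu)$ is lower semicontinuous (it is an integral of a function bounded below that is a monotone limit of continuous functions, namely the truncations $\min(\log\lvert f'\rvert,N)$), so the infimum is attained and is positive. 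Since entropy is bounded, $h_\mu(f)\le h_{\rm top}(f|\Lambda)=:H<\infty$, we get $P(\varphi_d)\le H-dc$, which is negative once $d>H/c$. Combining the two directions gives the stated equivalence.

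The main obstacle is the care needed with the potential $\log\lvert f'\rvert$ being unbounded (it vanishes nowhere but $\log\lvert f'\rvert$ can be $0$ and, more importantly, in the related setting of Example~1 one must be sure $P(\varphi_d)$ stays finite and the integrals behave well even though distortion is only tempered, not bounded); this is exactly where~\eqref{t35} is used, to guarantee that the cylinder sums in~\eqref{wurm} grow sub-exponentially enough and that the variational principle~\eqref{wirm} applies verbatim. The lower semicontinuity of $\mu\mapsto\chi(\mu)$ and the non-negativity $\chi(\mu)\ge 0$ are the two analytic facts to nail down carefully; everything else is formal manipulation of the variational principle.
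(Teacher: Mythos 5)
Your proposal is correct and takes essentially the same route as the paper, whose proof is simply the remark that everything follows from general facts about the pressure together with the variational principle~\eqref{wirm}; your sketch is the standard unpacking of exactly those facts (supremum of affine functions, $\chi(\mu)\ge 0$, boundedness of entropy, compactness of $\cM$). Your worry about $\log\lvert f'\rvert$ being unbounded is unnecessary here, since the paper treats $\log\lvert f'\rvert$ as a continuous function on the compact set $\Lambda$ (so $\mu\mapsto\chi(\mu)$ is in fact weak-$*$ continuous), and the cleanest justification of $\chi(\mu)\ge 0$ is the generator/Markov property argument (cylinder lengths are bounded above, so tempered distortion forces $\frac1n\log\lvert(f^n)'\rvert\ge -\rho_n+O(1/n)$) rather than expansion on the subsystems $\Lambda_m$.
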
  

\begin{proof}
  The claimed properties follow immediately from general facts about the
  pressure together with the variational principle~\eqref{wirm}. 
\end{proof}

\begin{lemma}\label{lem:dim}
	We have
	$ d_0 \le \dim_{\rm H}\Lambda$.
\end{lemma}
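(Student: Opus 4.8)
The plan is to approximate $\Lambda$ from the inside by the uniformly expanding subsystems $\Lambda_m$ and to combine Bowen's formula with the monotonicity of Hausdorff dimension. Write $P_{\Lambda_m}(\cdot)$ for the topological pressure with respect to $f|\Lambda_m$. Since $f|\Lambda_m$ is uniformly expanding, has bounded distortion and is conjugate to a subshift of finite type, the function $s\mapsto P_{\Lambda_m}(\varphi_s)$ is continuous and strictly decreasing (the latter because $\log\lvert f'\rvert$ is bounded below by a positive constant on $\Lambda_m$), hence it has a unique zero $s_m$, and Bowen's formula gives $\dim_{\rm H}\Lambda_m=s_m$ (see, e.g., \cite{Pes:96}; we will only use $\dim_{\rm H}\Lambda_m\ge s_m$, which also follows from the lower bounds obtained in Section~\ref{sec:low}). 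As $\Lambda_m\subset\Lambda$, monotonicity of $\dim_{\rm H}$ yields $\dim_{\rm H}\Lambda\ge s_m$ for every $m$; so it suffices to prove $\sup_m s_m\ge d_0$.

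To this end, fix an arbitrary $d<d_0$. By the definition of $d_0$ together with the continuity and monotonicity of $d\mapsto P(\varphi_d)$ we have $P(\varphi_d)>0$. Since the $\Lambda_m$ increase to $\Lambda$, the numbers $P_{\Lambda_m}(\varphi_d)$ form a non-decreasing sequence bounded above by $P(\varphi_d)$, and the key point is that $P_{\Lambda_m}(\varphi_d)\to P(\varphi_d)$. Granting this, $P_{\Lambda_m}(\varphi_d)>0$ for all large $m$, and since $s\mapsto P_{\Lambda_m}(\varphi_s)$ is strictly decreasing with zero $s_m$ this forces $d<s_m$. Hence $\sup_m s_m\ge d$, and letting $d\uparrow d_0$ gives $\sup_m s_m\ge d_0$, completing the proof modulo the convergence of pressures.

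That convergence is the step I expect to be the main obstacle; the inequality ``$\le$'' is automatic from $\Lambda_m\subset\Lambda$, so only lower semicontinuity along the approximation has to be shown. I would carry this out symbolically: $f|\Lambda_m$ is conjugate to a subshift $(\Sigma_{A_m},\sigma)$ whose admissible words exhaust those of $(\Sigma_A,\sigma)$ as $m\to\infty$, so for each fixed $n$ every generation-$n$ cylinder meeting $\Lambda$ also meets $\Lambda_m$ once $m$ is large; comparing the corresponding partial sums in the defining formula~\eqref{wurm} for the pressure --- the discrepancy introduced by the tempered distortion property~\eqref{t35} being only the harmless factor $e^{\pm nd\rho_n}$ --- and using that the $n$-th term of that sub-multiplicative sequence always bounds $P(\varphi_d)$ from above, one obtains $\liminf_m P_{\Lambda_m}(\varphi_d)\ge P(\varphi_d)$. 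Alternatively one can bypass pressure continuity entirely: for $d<d_0$ the variational principle~\eqref{wirm} and ergodic decomposition yield an ergodic $\mu\in\cM$ with $h_\mu(f)>d\int_\Lambda\log\lvert f'\rvert\,d\mu$; since $P(\varphi_1)\le 0$ by the Ruelle inequality we have $d_0\le 1$, so we may assume $d<1$, whereupon the Ruelle inequality forces $\int_\Lambda\log\lvert f'\rvert\,d\mu>0$, and then the dimension formula $\dim_{\rm H}\mu=h_\mu(f)/\int_\Lambda\log\lvert f'\rvert\,d\mu$ (valid for ergodic measures of positive exponent on piecewise monotone interval maps) gives $\dim_{\rm H}\Lambda\ge\dim_{\rm H}\mu>d$, hence $\dim_{\rm H}\Lambda\ge d_0$ as before.
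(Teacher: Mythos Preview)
Your proposal is correct, and in fact your brief ``alternative'' at the very end is precisely the paper's proof: pick $t<d_0$, so $P(\varphi_t)>0$; the variational principle gives an ergodic $\nu$ with $h_\nu(f)-t\chi(\nu)>0$, hence $h_\nu(f)>0$; since Lyapunov exponents are non-negative, the Ruelle inequality forces $\chi(\nu)>0$, and then the Hofbauer--Raith dimension formula yields $t<h_\nu(f)/\chi(\nu)=\dim_{\rm H}\nu\le\dim_{\rm H}\Lambda$. That is the entire argument in the paper, occupying five lines.

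Your \emph{main} route --- approximating by the uniformly hyperbolic $\Lambda_m$, invoking Bowen's formula $\dim_{\rm H}\Lambda_m=s_m$, and then using $P_{\Lambda_m}(\varphi_d)\to P(\varphi_d)$ to conclude $\sup_m s_m\ge d_0$ --- is also valid, but it is heavier: the pressure convergence you flag as ``the main obstacle'' is indeed a separate result in the paper (Proposition~\ref{convpress}), proved there not via the cylinder-counting sketch you give but via a comparison of conformal measures on $\Lambda_m$ and on $\Lambda$. So your main approach effectively factors through a later, harder proposition, whereas the paper (and your own alternative) stays at the level of a single ergodic measure and the variational principle. The upside of your subsystem route is that it ties the lemma directly to the approximation structure that drives the rest of the paper; the downside is that it imports machinery unnecessary for this particular inequality.
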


\begin{proof}
	It follows immediately from the generator condition that every ergodic $f$-invariant measure $\nu$ has non-negative Lyapunov exponent. 	
Suppose now that $P(-t\log\lvert f'\rvert)>0$ for some $t\ge 0$. 
Then, by the variational principle, there exists an ergodic $f$-invariant measure $\nu$ such that $ h_\nu(f)-t\chi(\nu)>0$ and thus $h_\nu(f)>0$. 
It follows then from~\cite{HofRai:92} that 
	\[
	t<\frac{h_\nu(f)}{\chi(\nu)} =
	\dim_{\rm H}\nu\le \dim_{\rm H}\Lambda.
	\]
\end{proof}

We now give a geometric description of $F$ defined in~\eqref{def:Fa},~\eqref{def:Fb} for positive $\alpha$. In general, $F$ is always a concave function with range $\{-\infty\} \cup [0,\infty)$. Let us write $P(d)\eqdef P(-d\log\lvert f'\rvert)$. 
Note that, by continuity and convexity, the pressure function $d\mapsto P(d )$ may fail to be differentiable on an at most countable set. We sketch below the particular case that we may have at most one point of non-differentiability.
If $P'(d)=-\alpha$ then 
\[
F(\alpha)= \frac {P(d)+\alpha d} {\alpha}.
\]
 If $P(d)\geq 0$ for all $d$, that is, the system is parabolic, then we have
\[
F(0) = \inf\{s\geq 0\colon P(s)=0\}.
\] 
In this situation we have the following two possible cases. \\
Case I: The  pressure function is differentiable at $F(0)$. Then $F(\alpha)$ is strictly decreasing for positive $\alpha$.\\
Case II: The  pressure function is not differentiable at $F(0)$. 
Then $F(\alpha)=F(0)$ for every $\alpha \le - \lim_{s\to F(0)_-} P'(s )$, and $F$ is strictly decreasing for greater $\alpha$.

\begin{figure}[h]\label{figuras}
      \psfrag{s1}[t][rr]{$\sim -d\alpha^+$}
      \psfrag{s2}[t][rr]{$\sim-d\alpha^-$}
      \psfrag{smi}[t][rr]{$\alpha^-$}
      \psfrag{spl}[t][rr]{$\alpha^+$}
      \psfrag{p1}[t][ll]{$P(d)$}
      \psfrag{dim}[t][cc]{$\dim_{\rm H}\Lambda$}
      \psfrag{a}[b][rr]{$d$}
      \psfrag{sal}[b][rr]{$\sim-d\alpha_0$}
      \psfrag{al}[t][rr]{$\alpha_0$}
      \psfrag{mi}[b][r]{$-\infty$}
      \psfrag{t1}[t][ll]{$F(\alpha)$}
      \psfrag{d}[t][cc]{$\alpha$}
      \includegraphics[width=0.7\linewidth]{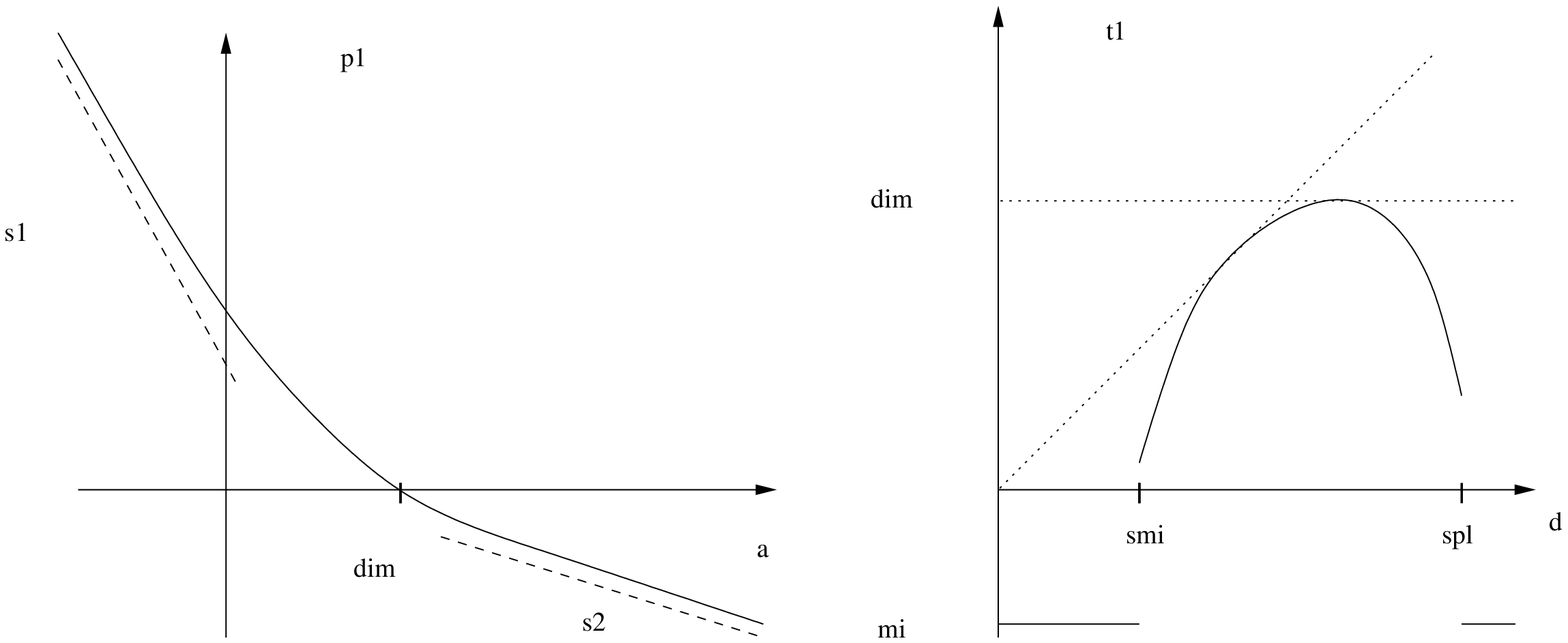} 
  \caption{Pressure and Lyapunov spectrum for uniformly hyperbolic system}
      \includegraphics[width=0.7\linewidth]{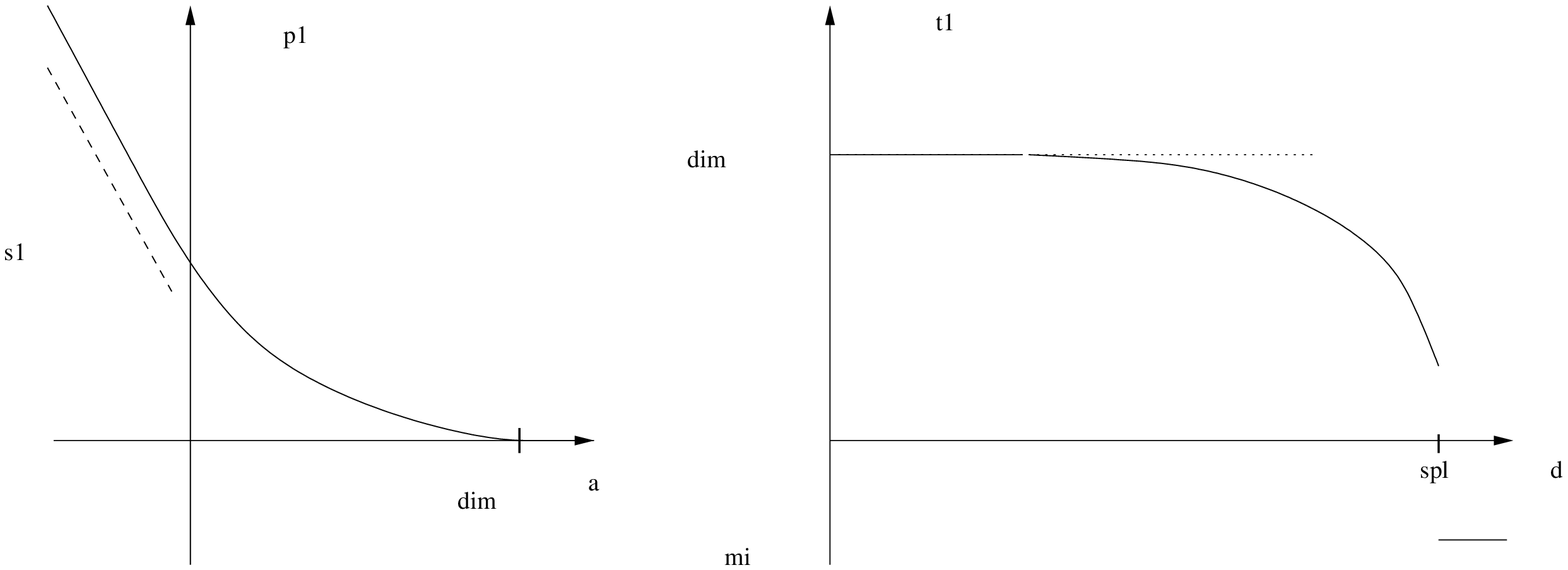} 
   \caption{Pressure and Lyapunov spectrum for parabolic system, Case I}
      \includegraphics[width=0.7\linewidth]{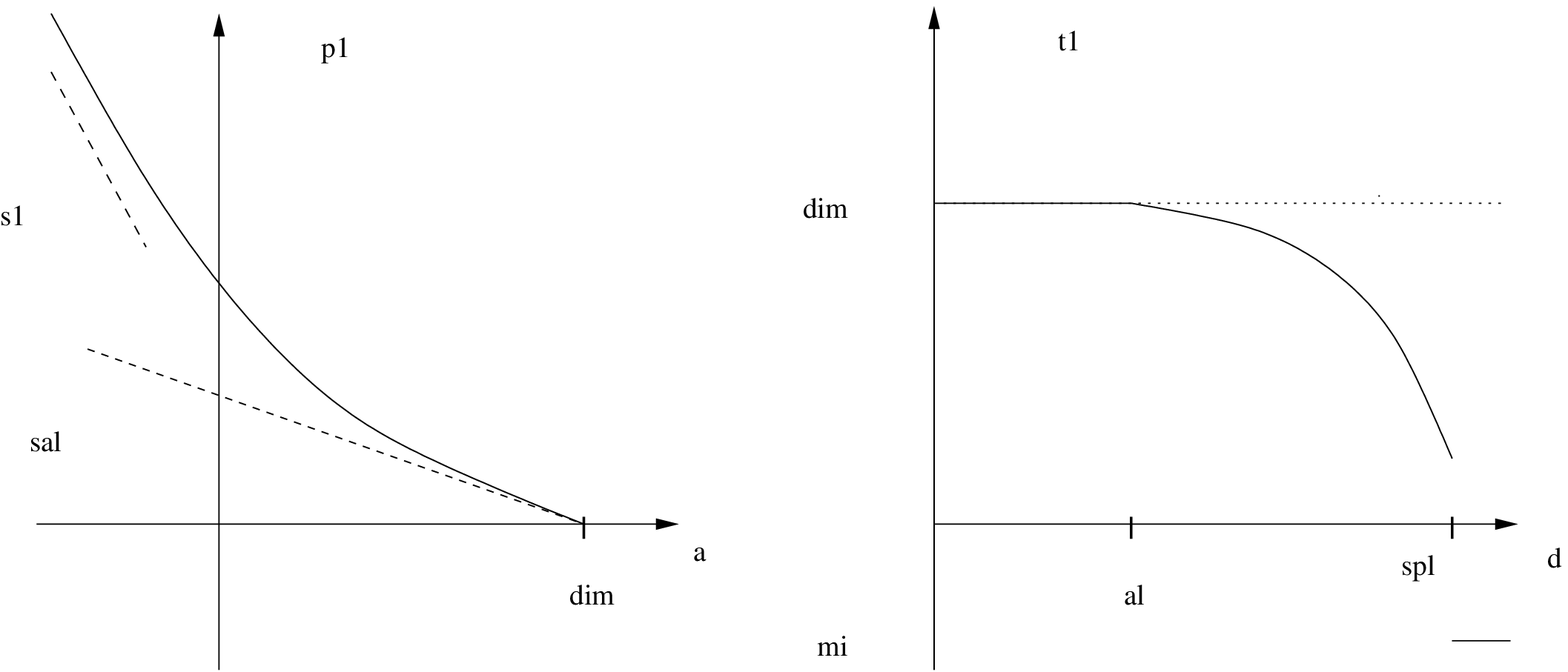} 
    \caption{Pressure and Lyapunov spectrum for parabolic system, Case II}
 \end{figure}

%--------------------------------------------------------------------------------------------------------
\subsection{Conformal measures}
%--------------------------------------------------------------------------------------------------------

The \emph{Ruelle-Perron-Frobenius transfer operator} $\cL_\varphi\colon
C(\Lambda)\to C(\Lambda)$ defined on the space $C(\Lambda)$ of continuous
functions $\psi\colon\Lambda\to\bR$ is given by
\[
\cL_\varphi\psi(x) \eqdef \sum_{\tiny \begin{matrix}f(y)=x,\\ y\in \Lambda\end{matrix}}e^\varphi(y)\psi(y)
\]
if $x\in\Lambda$. Denote by $\lambda_\varphi$ the spectral radius of $\cL_\varphi$. Let $\nu$ be an eigenmeasure of the dual operator $\cL_\varphi^\ast$ with eigenvalue $\lambda_\varphi$.
Note that $\nu$ is a probability measure but not necessarily $f$-invariant. However, the dynamical properties of $\nu$ with respect to $f|\Lambda$ are captured through its Jacobian. The \emph{Jacobian} of $\nu$ with respect to $f|\Lambda$ is the (essentially) unique function $J_\nu f$ determined through
\begin{equation}\label{conf}
\nu(f(A)) = \int_AJ_\nu f d\nu.
\end{equation}
for every Borel subset $A$ of $\Lambda$ such that $f|_A$ in
injective, and is given by $J_\nu f = \lambda_\varphi e^{-\varphi}$
(see~\cite{Wal:78}). %Lemma 3 
Moreover, by~\cite[Theorem 2.1]{Yur:99} we have $\log\lambda_\varphi = P(\varphi)$.
A measure satisfying~\eqref{conf}  is called \emph{$e^{P(\varphi)-\varphi}$-conformal measure}. 
Such a measure always exists if $f|\Lambda$ is expansive and open (\cite[Theorem 3.12]{DenUrb:91}).

%---------------------------------------------------------------------------------------------------------
\section{Hyperbolic sub-systems}\label{sec:hypsub}
%---------------------------------------------------------------------------------------------------------

For shortness, we will write
\[
P_m=P_{f|\Lambda_m}.
\]
Given $m$, the function $d\mapsto P_m(\varphi_d)$ is analytic and strictly decreasing. 
For fixed $d\in\bR$, the sequence $P_m(\varphi_d)$ is non-decreasing.

\begin{proposition}\label{convpress}
	Given $d\in\bR$, we have
	 $P(\varphi_d)=\lim_{m\to\infty}P_m(\varphi_d)$.
\end{proposition}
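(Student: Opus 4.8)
I would prove the two inequalities separately. The bound $\lim_{m\to\infty}P_m(\varphi_d)\le P(\varphi_d)$ is immediate: each $\Lambda_m\subseteq\Lambda$ is compact and $f$-invariant, so $\cM(\Lambda_m)\subseteq\cM(\Lambda)$ and the variational principle~\eqref{wirm} gives $P_m(\varphi_d)\le P(\varphi_d)$ for every $m$; since $(P_m(\varphi_d))_m$ is non-decreasing, the limit $L\eqdef\lim_{m\to\infty}P_m(\varphi_d)$ exists and $L\le P(\varphi_d)$ (which is finite, being at most $h_{\rm top}(f|\Lambda)+\lvert d\rvert\sup_\Lambda\lvert\log\lvert f'\rvert\rvert$). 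For the reverse bound I would reduce to the following claim: for every $\epsilon>0$ there is a compact $f$-invariant set $K\subseteq\Lambda$ on which $f$ is uniformly expanding and with $P_{f|K}(\varphi_d)>P(\varphi_d)-\epsilon$. Granting this, fix such a $K$. Since $f|K$ is uniformly expanding, every $\nu\in\cM(K)$ has $\chi(\nu)>0$, so the compact set $K$ contains none of the finitely many parabolic points and hence lies at a positive distance from them. Because the $\Lambda_m$ are obtained from $\Lambda$ by deleting a neighbourhood of the parabolic set together with all its $f$-preimages, these neighbourhoods shrinking to the parabolic set as $m\to\infty$, and because $K$ is $f$-invariant, it follows that $K\subseteq\Lambda_m$ for all large $m$; then $\cM(K)\subseteq\cM(\Lambda_m)$, so $P_m(\varphi_d)\ge P_{f|K}(\varphi_d)>P(\varphi_d)-\epsilon$, whence $L\ge P(\varphi_d)-\epsilon$, and letting $\epsilon\to0$ gives $L\ge P(\varphi_d)$.

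To construct $K$ I would build a horseshoe, in the spirit of Katok's theorem. By the variational principle and the ergodic decomposition, choose an ergodic $\mu\in\cM(\Lambda)$ with $h_\mu(f)+\int\varphi_d\,d\mu>P(\varphi_d)-\epsilon$, and assume first that $h_\mu(f)>0$, so $\chi(\mu)>0$ by the Ruelle inequality. Using the Shannon--McMillan--Breiman theorem, the Birkhoff theorem for $\log\lvert f'\rvert$, and the tempered distortion estimate~\eqref{t35} (to compare point values of $\lvert(f^N)'\rvert$ with values essentially constant on cylinders), for all large $N$ one obtains a family $\cE$ of level-$N$ cylinders with $\#\cE\ge\frac12 e^{N(h_\mu(f)-\epsilon)}$ on each of which $\lvert\frac1N\log\lvert(f^N)'\rvert-\chi(\mu)\rvert<\epsilon$. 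Let $R$ be the finite mixing constant of the topologically mixing subshift of finite type conjugate to $f|\Lambda$, and let $K\subseteq\Lambda$ be the compact $f$-invariant set given by the transitive subshift of finite type generated by freely concatenating blocks from $\cE$, inserting admissible connecting words of length $R$ between consecutive blocks (chosen, when possible, to avoid the parabolic region). Since the $\cE$-blocks occupy a fraction $N/(N+R)$ of every long orbit segment of $f|K$ and $\log\lvert f'\rvert$ is bounded on $\Lambda$, every $x\in K$ satisfies $\frac1n\log\lvert(f^n)'(x)\rvert\ge\chi(\mu)-\epsilon-O(1/N)$ for all large $n$, so $f|K$ is uniformly expanding once $N$ is large. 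Counting the admissible words of $f|K$ of length $k(N+R)$, letting $k\to\infty$, and using~\eqref{t35} to evaluate $\varphi_d$ on them, one gets for the measure of maximal entropy $\nu$ of $f|K$ that
\[
P_{f|K}(\varphi_d)\ \ge\ h_\nu(f)+\int\varphi_d\,d\nu\ \ge\ \frac{N(h_\mu(f)-\epsilon)-\log 2}{N+R}-d\,\chi(\mu)-2\lvert d\rvert\epsilon,
\]
which for $N$ large exceeds $P(\varphi_d)-c_d\epsilon$ for some constant $c_d$ depending only on $d$; replacing $\epsilon$ by $\epsilon/c_d$ proves the claim.

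If the near-optimal $\mu$ has $h_\mu(f)=0$ the situation is degenerate: then $P(\varphi_d)-\epsilon<\int\varphi_d\,d\mu=-d\,\chi(\mu)$, and for $\chi(\mu)>0$ one takes for $K$ a single periodic orbit obtained by closing a $\mu$-typical orbit segment (its Lyapunov exponent is close to $\chi(\mu)$ and it is trivially uniformly expanding), while for $\chi(\mu)=0$ one has $P(\varphi_d)\le0$, hence $P(\varphi_d)=0$ in the parabolic case, and periodic orbits concentrating near a parabolic point, whose Lyapunov exponents tend to $0$, give $L\ge0=P(\varphi_d)$. The step that requires real care --- and where the particular structure of the example classes of Section~\ref{sec:pre} enters --- is the construction of $K$ in the claim: one must choose the connecting words and discard the few cylinders of $\cE$ that carry long ``parabolic'' runs, so that the expansion rate of $f|K$ stays bounded away from $0$ while $P_{f|K}(\varphi_d)$ stays within $\epsilon$ of $P(\varphi_d)$. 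This is possible because the mixing constant $R$ of $f|\Lambda$ is a fixed finite number and, for each fixed $N$, only finitely many cylinders are involved; once $K$ has been fixed, the inclusion $K\subseteq\Lambda_m$ for large $m$ is automatic from the construction of the $\Lambda_m$.
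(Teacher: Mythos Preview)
Your strategy is sound for the example classes but takes a quite different route from the paper. The paper gives a short proof by contradiction via conformal measures: assuming $\delta \eqdef P(\varphi_d)-\lim_m P_m(\varphi_d)>0$, one compares the $e^{P(\varphi_d)-\varphi_d}$-conformal measure $\nu_d$ on $\Lambda$ with any weak$^*$ accumulation point $\nu_d^*$ of the $e^{P_m(\varphi_d)-\varphi_d}$-conformal measures $\nu_d^m$ on $\Lambda_m$. Tempered distortion forces $\nu_d(\Delta_{i_1\ldots i_n})\le e^{n(2\rho_n-\delta)}\nu_d^*(\Delta_{i_1\ldots i_n})$ on every cylinder, which is impossible for two probability measures. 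The only structural fact used about the family $(\Lambda_m)_m$ is that each fixed cylinder of $\Lambda$ eventually meets $\Lambda_m$, and this follows from Hausdorff convergence alone.

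Your Katok-type horseshoe construction, by contrast, hinges on the step $K\subseteq\Lambda_m$ for all large $m$, and you justify this by invoking the \emph{specific} construction of the $\Lambda_m$ in the examples (deleting a shrinking cylinder neighbourhood of the parabolic set and all its preimages). Under the paper's abstract standing hypotheses---an increasing family of uniformly expanding subsystems Hausdorff-converging to $\Lambda$---there is no reason a given uniformly hyperbolic $K\subseteq\Lambda$ must ever be contained in some $\Lambda_m$; Hausdorff convergence only guarantees that $\Lambda_m$ becomes dense in $K$, not that it swallows $K$. Since Proposition~\ref{convpress} is stated and used (e.g.\ in Lemmas~\ref{lem:apm} and~\ref{lem:Fcon}) for the general framework, this is a genuine loss of generality, not merely a matter of filling in details. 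What your approach buys is a transparent, constructive explanation of the approximation (near-equilibrium states live on hyperbolic horseshoes); what the paper's argument buys is brevity, no horseshoe bookkeeping, no separate treatment of the cases $h_\mu(f)=0$ or $\chi(\mu)=0$, and validity under the abstract hypotheses.
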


\begin{proof}
	Assume that this is not the case for some $d\in\bR$. 
	Clearly, $P_m(\varphi_d)$ form an increasing sequence.
	Notice that $P^\ast(d)\eqdef\lim_{m\to\infty}P_m(\varphi_d)\le P(\varphi_d)$. 
Let $\delta\eqdef P(\varphi_d)-P^\ast(d)$. 
Let $m_0\ge 1$ such that $P_m(\varphi_d)\ge P^\ast(d)-\delta/2$ for every $m\ge m_0$.
	
There exists a sequence of $\exp\left(P_m(\varphi_d)-\varphi_d\right)$-conformal measures (with respect to the sub-system $f|\Lambda_m$) which we denote by $\nu_d^m$. 
Each such measure satisfies
	\[
	1= \nu_d^m(f^n(\Delta_n(x))) 
	= \int_{\Delta_n(x)} e^{nP_m(\varphi_d)} 
		\lvert (f^n)'(y)\rvert^d\,d\nu_d^m(y)
	\]
	for every $n\ge 1$ and every $x\in\Lambda_m$. 
	Hence, from the tempered distortion property~\eqref{t35} we can conclude that
	\[
	e^{ -n\rho_n}\le 
	\frac{\nu_d^m(\Delta_n(x))}
		{ \exp\left(-nP_m(\varphi_d)\right) \lvert (f^n)'(x)\rvert^{-d} }\le
	e^{ n\rho_n}.
	\]
	Notice that this inequality holds only for cylinders $\Delta_n(x)$ which intersects $\Lambda_m$. However, if $\Delta_n(x)$ intersects $\Lambda$, then it intersects $\Lambda_m$ for every $m$ sufficiently big.
	
	Likewise for $f|\Lambda$ we obtain for the $\exp\left(P(\varphi_d)-\varphi_d\right)$-conformal measure
	\[
	 e^{ -n\rho_n}\le 
	\frac{\nu_d(\Delta_n(x))}
		{\exp\left(-nP(\varphi_d)\right)\lvert (f^n)'(x)\rvert^{-d} }\le  
	e^{ n\rho_n}
	\]
	for every cylinder $\Delta_n(x)$ intersecting $\Lambda$.
	Hence, we obtain for every $n\ge 1$ and every cylinder $\Delta_{i_1\ldots i_n}(x)$ which intersects $\Lambda_m$. 
	\[
	\nu_d(\Delta_{i_1\ldots i_n}) 
	\le	\nu_d^m(\Delta_{i_1\ldots i_n}) 
	e^{-n(P(\varphi_d)-P_m(\varphi_d))}
		e^ { 2n\rho_n} 
	\le \nu_d^m(\Delta_{i_1\ldots i_n})  e^{-n\delta}e^ { 2n\rho_n} 
	\]
	for every $m\ge 1$.
	Take a subsequence $(\nu_d^{m_k})_k$ converging to some probability measure $\nu_d^\ast$ in the weak$\ast$ topology. Then we obtain
	\[
	\nu_d(\Delta_{i_1\ldots i_n}) \le 
		\nu_d^\ast(\Delta_{i_1\ldots i_n})  e^ { n(2\rho_n-\delta)}
		< \nu_d^\ast(\Delta_{i_1\ldots i_n}) 
	\]	
	for every  $(i_1\ldots i_n)\in\Sigma_{Q_m,n}$. 
	This contradicts the fact that both measures are probability measures. 
	\end{proof}
	
We introduce some further notation. Let 
\[\begin{split}
\alpha_m^- &\eqdef
 \inf\{\alpha\ge 0\colon \chi(x)=\alpha\text{ for some }x\in\Lambda_m\}, \\
\alpha_m^+ &\eqdef
 \sup\{\alpha\ge 0\colon \chi(x)=\alpha\text{ for some }x\in\Lambda_m\} .
\end{split}\]
Similarly, let
\[\begin{split}
\alpha^- &\eqdef
 \inf\{\alpha\ge 0\colon \chi(x)=\alpha\text{ for some }x\in\Lambda\}, \\
\alpha^+ &\eqdef
 \sup\{\alpha\ge 0\colon \chi(x)=\alpha\text{ for some }x\in\Lambda\} .
\end{split}\]
Those are easy to calculate using the pressure, since we have 
\[\begin{split}
\alpha^- &= \lim_{d\to\infty} - \frac 1 d P(\varphi_d), \\
\alpha^+ &= \lim_{d\to -\infty} - \frac 1 d P(\varphi_d).
\end{split}\]

\begin{lemma} \label{lem:apm}
We have
\[\lim_{m\to \infty} \alpha_m^- = \inf_{m\ge 1}\alpha_m^-=\alpha^- ,
\quad
\lim_{m\to \infty} \alpha_m^+ = \sup_{m\ge 1}\alpha_m^+=\alpha^+.
\]
\end{lemma}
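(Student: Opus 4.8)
The plan is to prove the two chains of equalities separately, but by symmetric arguments. Consider first the statement for $\alpha_m^-$. The monotonicity $\alpha_m^- \ge \alpha_{m+1}^-$ is immediate: since $\Lambda_m \subset \Lambda_{m+1} \subset \Lambda$, every Lyapunov exponent realized on $\Lambda_m$ is realized on $\Lambda_{m+1}$ (and on $\Lambda$), so the infimum defining $\alpha_{m+1}^-$ is over a larger set, hence smaller or equal; likewise $\alpha_m^- \ge \alpha^-$ for every $m$. Therefore $\lim_{m\to\infty}\alpha_m^- = \inf_{m\ge 1}\alpha_m^- \ge \alpha^-$, and only the reverse inequality $\inf_m \alpha_m^- \le \alpha^-$ needs work. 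Since $f|\Lambda_m$ is uniformly hyperbolic with bounded distortion and conjugate to a subshift of finite type, the classical formula $\alpha_m^- = \lim_{d\to\infty} -\frac{1}{d}P_m(\varphi_d)$ holds; the analogous formula for $\Lambda$ is stated in the excerpt just before the lemma.

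The key step is to pass this spectral information to the limit using Proposition~\ref{convpress}, which gives $P(\varphi_d) = \lim_{m\to\infty} P_m(\varphi_d)$ pointwise in $d$. For each fixed $d > 0$ we have $\alpha_m^- \le -\frac{1}{d}P_m(\varphi_d)$ (since $\alpha_m^-$ is the limit of this quantity as $d\to\infty$ and the function $d\mapsto -\frac1d P_m(\varphi_d)$ is non-increasing — this uses convexity of $P_m$, which is true since $P_m$ is analytic and strictly decreasing and the subsystems are hyperbolic). Passing $m\to\infty$ on the right gives $\inf_m \alpha_m^- = \lim_m \alpha_m^- \le -\frac1d P(\varphi_d)$ for every $d>0$, and then letting $d\to\infty$ yields $\inf_m\alpha_m^- \le \alpha^-$. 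Combined with the trivial inequality this proves the first chain of equalities. For $\alpha^+$ the argument is the mirror image: $\alpha_m^+ \le \alpha_{m+1}^+ \le \alpha^+$ gives $\sup_m\alpha_m^+ = \lim_m\alpha_m^+ \le \alpha^+$, and using $\alpha_m^+ \ge -\frac1d P_m(\varphi_d)$ for $d<0$ together with Proposition~\ref{convpress} and then $d\to-\infty$ gives the reverse inequality.

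The main obstacle I anticipate is making precise the inequality $\alpha_m^- \le -\frac1d P_m(\varphi_d)$ for finite $d$, i.e. justifying that the monotone limit defining $\alpha_m^-$ is approached from above; this amounts to checking that $d\mapsto -\frac1d P_m(\varphi_d)$ is monotone on $(0,\infty)$, which is a standard consequence of the convexity of $d\mapsto P_m(\varphi_d)$ together with $P_m(0) = h_{\rm top}(f|\Lambda_m) \ge 0$. One should also be slightly careful about whether $\alpha^-$ is actually attained as a Lyapunov exponent of some point in $\Lambda$ (as opposed to merely being an infimum); but since we only need the equality of the numbers $\alpha_m^-$, $\alpha^-$ and not attainment, the pressure formulas quoted in the excerpt suffice and no extra work is required there. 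Alternatively, and perhaps more cleanly, one can avoid the pressure formulas entirely: given $\varepsilon > 0$, pick an ergodic measure $\mu$ on $\Lambda$ with $\chi(\mu) < \alpha^- + \varepsilon$ and $h_\mu(f) > 0$ (possible unless $\log|f'|$ is cohomologous to a constant, the excluded case; near the bottom of the spectrum one uses that there exist measures of small exponent with positive entropy — this is where tempered distortion and topological mixing enter), then approximate $\mu$ weak$\ast$ by measures supported on the $\Lambda_m$ using that $\Lambda_m \to \Lambda$ in the Hausdorff topology, obtaining for large $m$ a point in $\Lambda_m$ with exponent close to $\chi(\mu)$. I would present the pressure-based argument as the main line since it is shorter and self-contained given the material already developed.
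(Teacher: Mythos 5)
Your overall strategy (the trivial inequalities from $\Lambda_m\subset\Lambda$, plus a pressure estimate combined with Proposition~\ref{convpress}, taking $m\to\infty$ at fixed $d$ before letting $d\to\pm\infty$) matches the paper's, but the key inequality you rely on points the wrong way, and the monotonicity claim supporting it is false. By the variational principle, for $d>0$,
\[
-\frac{1}{d}P_m(\varphi_d)=\inf_{\mu\in\cM(\Lambda_m)}\Big(\chi(\mu)-\frac{h_\mu(f)}{d}\Big),
\]
and since $h_\mu(f)\ge 0$ this is a \emph{non-decreasing} function of $d$ on $(0,\infty)$, converging to $\alpha_m^-$ \emph{from below} (equivalently: the $y$-intercept of the supporting line to the convex function $d\mapsto P_m(\varphi_d)$ is the entropy of the corresponding equilibrium state, hence non-negative, which forces the derivative of $-P_m(\varphi_d)/d$ to be non-negative). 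So the correct inequality is $\alpha_m^-\ge -\frac{1}{d}P_m(\varphi_d)$, the opposite of what you assert; a sanity check with $\lvert f'\rvert\equiv 2$ gives $-\frac{1}{d}P(\varphi_d)=(1-\frac{1}{d})\log 2<\log 2=\alpha^-$. Feeding the corrected inequality through your limit procedure yields only $\inf_m\alpha_m^-\ge\alpha^-$, i.e.\ the trivial direction you already had. The same reversal occurs for $\alpha^+$: for $d<0$ one has $\alpha_m^+\le-\frac{1}{d}P_m(\varphi_d)$, not $\ge$.

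The repair is precisely the error term the paper keeps. For $d>0$ the variational principle gives $P_m(\varphi_d)\le P_m(0)-d\alpha_m^-\le P(0)-d\alpha_m^-$, i.e.\ $\alpha_m^-\le-\frac{1}{d}P_m(\varphi_d)+\frac{P(0)}{d}$; letting $m\to\infty$ at fixed $d$ via Proposition~\ref{convpress} and then $d\to\infty$ kills the term $P(0)/d$ and yields $\inf_m\alpha_m^-\le\alpha^-$, and symmetrically for $\alpha^+$ with $d<0$. Your alternative sketch (choosing an ergodic measure of near-minimal exponent and approximating it by measures on $\Lambda_m$) is not a substitute as written: Hausdorff convergence of $\Lambda_m$ to $\Lambda$ does not by itself produce invariant measures on $\Lambda_m$ whose Lyapunov exponents converge to $\chi(\mu)$, and controlling exponents near the bottom of a parabolic spectrum is exactly the delicate point the pressure argument is designed to avoid.
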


\begin{proof}
We have for $d<0$
\[ 
P_m(\varphi_d)+ d \alpha_m^+ \le P_m(0)  \le P(0) , 
\]
hence,  by Proposition~\ref{convpress}, we obtain
\[
\alpha^+ = \lim_{d\to -\infty} - \frac{1}{d} P(\varphi_d) \le \sup_{m\ge 1} \alpha_m^+ .
\]
Similarly, for $d>0$ and $m$ big enough we have
\[
P_m(\varphi_d) \le P_m(0) - d \alpha_m^- \le P(0) - d \alpha_m^-,
\]
and  thus
\[
\alpha^- = \lim_{d\to \infty} - \frac{1}{d} P(\varphi_d) \ge \inf_{m\ge 1} \alpha_m^- .
\]
The opposite inequalities follow from the definition of $\alpha^\pm$ and $\alpha_m^\pm$ and from $\Lambda_m\subset \Lambda$.
\end{proof}

Given $\alpha>0$ and $m\ge 1$ let us denote
\[
	F_m(\alpha)\eqdef 
	\frac{1}{\alpha}\inf_{d\in\bR}\left(P_m(\varphi_d)+\alpha d\right).
\]

\begin{lemma}\label{lem:Fcon}
For every $\alpha\in (\alpha^-, \alpha^+)$ we have
\begin{equation}\label{fcon}
\sup_{m\ge1}F_m(\alpha)
= \lim_{m\to\infty}F_m(\alpha) 
= F(\alpha).
\end{equation} 
\end{lemma}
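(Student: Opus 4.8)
The plan is to deduce the convergence of the $F_m$ from the convergence of the pressure functions established in Proposition~\ref{convpress}, using the fact that a Legendre--Fenchel transform is continuous under monotone limits of convex functions. First I would record the basic monotonicity: since $\Lambda_m\subset\Lambda_{m+1}$, we have $P_m(\varphi_d)\le P_{m+1}(\varphi_d)\le P(\varphi_d)$ for every $d$, and hence
\[
F_m(\alpha)=\frac1\alpha\inf_{d\in\bR}\bigl(P_m(\varphi_d)+\alpha d\bigr)
\le \frac1\alpha\inf_{d\in\bR}\bigl(P_{m+1}(\varphi_d)+\alpha d\bigr)=F_{m+1}(\alpha)
\le F(\alpha).
\]
Thus $(F_m(\alpha))_m$ is non-decreasing and bounded above by $F(\alpha)$, so $\sup_{m}F_m(\alpha)=\lim_{m\to\infty}F_m(\alpha)$ exists and is $\le F(\alpha)$. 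It remains to prove the reverse inequality $\lim_{m\to\infty}F_m(\alpha)\ge F(\alpha)$.

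For the reverse inequality I would argue as follows. Fix $\alpha\in(\alpha^-,\alpha^+)$ and let $\varepsilon>0$. The function $d\mapsto P(\varphi_d)+\alpha d$ is convex, and because $\alpha$ lies strictly between the asymptotic slopes $\alpha^-=\lim_{d\to\infty}-\frac1dP(\varphi_d)$ and $\alpha^+=\lim_{d\to-\infty}-\frac1dP(\varphi_d)$, this function tends to $+\infty$ as $d\to\pm\infty$; hence its infimum is attained (or nearly attained) at some finite $d^\ast=d^\ast(\alpha)$, and we may choose a compact interval $J=[-R,R]$ containing a point $d_0$ with $P(\varphi_{d_0})+\alpha d_0\le \alpha F(\alpha)+\alpha\varepsilon$ and such that outside $J$ the quantity $P(\varphi_d)+\alpha d$ exceeds $\alpha F(\alpha)+1$, say. (The latter uses the strict inequalities $\alpha^-<\alpha<\alpha^+$ together with convexity to get a uniform lower bound on the growth at infinity.) On the compact interval $J$, Proposition~\ref{convpress} gives pointwise convergence $P_m(\varphi_d)\to P(\varphi_d)$ of convex functions, which is automatically uniform on $J$; choose $m_0$ so that $P_m(\varphi_d)\ge P(\varphi_d)-\alpha\varepsilon$ for all $d\in J$ and all $m\ge m_0$, and also large enough (using Lemma~\ref{lem:apm}) that $\alpha\in(\alpha_m^-,\alpha_m^+)$, so the infimum defining $F_m(\alpha)$ is likewise attained in $J$ (after possibly enlarging $J$ once and for all, which is harmless). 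Then for $m\ge m_0$,
\[
\alpha F_m(\alpha)=\inf_{d\in\bR}\bigl(P_m(\varphi_d)+\alpha d\bigr)
=\inf_{d\in J}\bigl(P_m(\varphi_d)+\alpha d\bigr)
\ge \inf_{d\in J}\bigl(P(\varphi_d)+\alpha d\bigr)-\alpha\varepsilon
\ge \alpha F(\alpha)-2\alpha\varepsilon,
\]
whence $\lim_{m\to\infty}F_m(\alpha)\ge F(\alpha)-2\varepsilon$. Letting $\varepsilon\to0$ finishes the proof.

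The main obstacle is the bookkeeping needed to confine the infimum defining $F_m(\alpha)$ to a fixed compact set of $d$'s, uniformly in $m$: a priori the minimizer $d^\ast_m$ for $F_m$ could run off to $\pm\infty$ as $m$ grows. This is exactly where the hypothesis $\alpha\in(\alpha^-,\alpha^+)$ is essential. One controls it by noting that the asymptotic slopes of $d\mapsto P_m(\varphi_d)$ are $-\alpha_m^+$ (as $d\to-\infty$) and $-\alpha_m^-$ (as $d\to+\infty$), that $\alpha_m^-\le\alpha^-<\alpha<\alpha^+\le\alpha_m^+$, and that these bounds, combined with the uniform bound $P_m(\varphi_d)\le P(\varphi_d)$ and $P_m(0)\le P(0)$, force $P_m(\varphi_d)+\alpha d\to+\infty$ as $|d|\to\infty$ at a rate independent of $m$; concretely, for $d\ge R$ one has $P_m(\varphi_d)+\alpha d\ge P_m(\varphi_R)+\alpha_m^-(d-R)\cdot(-1)\cdot(-1)+\cdots$ — more simply, convexity gives $P_m(\varphi_d)\ge P_m(\varphi_0)-\alpha_m^+ d\ge -\alpha^+d + (\text{bounded})$ for $d<0$ and an analogous estimate for $d>0$, so the linear term $\alpha d$ with $\alpha^-<\alpha<\alpha^+$ dominates. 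Once this equi-coercivity is in hand, everything else is the routine uniform-convergence argument sketched above, and one should also remark (as the proof of Proposition~\ref{convpress} implicitly does) that cylinders meeting $\Lambda$ meet $\Lambda_m$ for all large $m$, so no issue arises from the supports shrinking.
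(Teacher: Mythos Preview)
Your strategy is essentially the paper's own first argument: monotonicity of $P_m$ gives $F_m\nearrow$, Proposition~\ref{convpress} gives pointwise convergence $P_m\to P$, and the hypothesis $\alpha\in(\alpha^-,\alpha^+)$ is used to confine the minimizers to a fixed compact interval so that one can pass to the limit. The paper packages the last step as a nested-compacts contradiction (the sets $J_m=\{s:P_m(\varphi_s)+s\alpha\le(F(\alpha)-\varepsilon)\alpha\}$ are nonempty, closed, bounded, and decreasing, so their intersection contains some $s$ at which $P(\varphi_s)+s\alpha\le(F(\alpha)-\varepsilon)\alpha$, a contradiction), and also notes a second proof via Wijsman's theorem on infimal convergence of Legendre--Fenchel pairs; your uniform-convergence-on-compacts version is an equivalent repackaging.

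One correction, however: in your equi-coercivity paragraph you write $\alpha_m^-\le\alpha^-<\alpha<\alpha^+\le\alpha_m^+$, which is backwards---since $\Lambda_m\subset\Lambda$ one has $\alpha_m^-\ge\alpha^-$ and $\alpha_m^+\le\alpha^+$. The clean way to get coercivity uniform in $m$ is the one you almost state: by Lemma~\ref{lem:apm} pick a single $m_0$ with $\alpha_{m_0}^-<\alpha<\alpha_{m_0}^+$; then $d\mapsto P_{m_0}(\varphi_d)+\alpha d$ is coercive, hence bounded below by some constant outside a fixed compact $J$, and for every $m\ge m_0$ the inequality $P_m\ge P_{m_0}$ gives the same lower bound. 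This replaces the garbled estimate at the end of your proposal and makes the argument go through cleanly.
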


\begin{proof}
First notice that we can rewrite
\[\begin{split}
F(\alpha)& =
\frac{1}{\alpha}\inf_{d\in\bR}\left(P(\varphi_d)+\alpha d\right)\\
&= \sup \{d\colon d\alpha\le P(\varphi_s) + s\alpha \text{ for every }s\}.
\end{split}\]	
The analogous relation holds for $F_m$ with $P$ replaced by $P_m$.
Let us assume that there exists $\varepsilon>0$ such that $F_m(\alpha) < F(\alpha) - \varepsilon$ for every $m\ge 1$. This would imply that for every $m$ the set
\[
J_m\eqdef \{s\colon P_m(\varphi_s) +s\alpha \leq (F(\alpha) - \epsilon)\alpha\}
\] 
is non-empty, closed, and bounded. %, since $\alpha<\alpha_{\rm max}$). 
Moreover, as $P_{m+1}\ge P_m$, we have $J_{m+1}\subset J_m$. Hence, $\bigcap_{m\ge 1} J_m$ is non-empty. 
For $s\in \bigcap_{m\ge 1} J_m$ we conclude that 
\[
P_m(\varphi_s) +s\alpha 
\le (F(\alpha) -\varepsilon)\alpha
\] 
for every $m\ge 1$. Together with Proposition~\ref{convpress} we hence would obtain  
\[
P(\varphi_s)+s\alpha \le (F(\alpha) -\varepsilon) \alpha
\]
which is a contradiction.

We mention a second way of proving~\eqref{fcon} which is based on the convex conjugate functions. 
Let
\[
T_m(\alpha)\eqdef \sup_{d\in\bR}\left(\alpha d-P_m(\varphi_{-d})\right)
\]
denote the convex conjugate of $d\mapsto P_m(\varphi_{-d})$. 
Then $(P_m,T_m)$ form a Legendre-Fenchel pair.
Wijsman~\cite{Wij:66} has shown that for given Legendre-Fenchel pairs $(P_m,T_m)$ and $(P,T)$, the functions $P_m$ converge infimally to $P$ if and only if $T_m$ converges infimally to $T$ (we refer to~\cite{Wij:66} for the definition of infimal convergence). In general, this kind of convergence does not coincide with the pointwise convergence. However, by monotonicity and continuity of the pressure function 
%we have ${}_\rho P(s\log\lvert f'\rvert) = P((s-\rho)\log\lvert f'\rvert)$, 
we obtain that $P_m$ converges infimally if and only if it converges pointwise. The application of Proposition~\ref{convpress} implies~\eqref{fcon}.
\end{proof}

For the remainder of this section let $K\subset \Lambda$ be some  $f$-invariant compact set  such that $f|K$ is uniformly expanding.
We have  the following result by Jenkinson~\cite{Jen:01}.
	
\begin{lemma}[\cite{Jen:01}]\label{lem:Jen}
	For any $\alpha\in(\inf_{\nu\in\cM(K)}\chi(\nu),\sup_{\nu\in\cM(K)}\chi(\nu))$  there exists a number  $q=q(\alpha)$ and some equilibrium state $\nu=\nu(\alpha)$ for  the potential $q\log\lvert f'\rvert$  (with respect to $f|K$) such that 
	\[
	\int_K \log\,\lvert f'\rvert \,d\nu=\alpha.
	\]
\end{lemma}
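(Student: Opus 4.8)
The plan is to deduce Lemma~\ref{lem:Jen} from the differentiability and the Legendre–Fenchel structure of the pressure function for the uniformly expanding subsystem $f|K$. Since $f|K$ is uniformly expanding and has bounded distortion and is conjugate to a subshift of finite type, the function $q\mapsto P_{f|K}(q\log\lvert f'\rvert)$ is real-analytic and strictly convex (unless $\log\lvert f'\rvert$ is cohomologous to a constant on $K$, the case we have excluded), and for each $q$ there is a unique equilibrium state $\nu_q$ for $q\log\lvert f'\rvert$. The key identity is the standard derivative formula
\[
\frac{d}{dq}\,P_{f|K}\big(q\log\lvert f'\rvert\big)=\int_K\log\lvert f'\rvert\,d\nu_q,
\]
which holds because the pressure is differentiable in the direction of the Hölder potential $\log\lvert f'\rvert$ and its derivative is given by integration against the corresponding equilibrium state. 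Thus the map $q\mapsto\int_K\log\lvert f'\rvert\,d\nu_q$ is exactly the derivative of the analytic convex function $q\mapsto P_{f|K}(q\log\lvert f'\rvert)$.

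The next step is to identify the range of this derivative with the open interval $(\inf_{\nu\in\cM(K)}\chi(\nu),\sup_{\nu\in\cM(K)}\chi(\nu))$. By the variational principle, $\chi(\nu)=\int_K\log\lvert f'\rvert\,d\nu$ over $\nu\in\cM(K)$ sweeps out a closed bounded interval whose endpoints are $\alpha_K^-$ and $\alpha_K^+$; these are the asymptotic slopes $\lim_{q\to+\infty}-\tfrac1q P_{f|K}(-q\log\lvert f'\rvert)$ and $\lim_{q\to-\infty}$ of the analogous expression (this is the same computation as for $\alpha^\pm$ in the excerpt, now for the subsystem). A strictly convex analytic function on $\bR$ has a strictly increasing continuous derivative whose image is precisely the open interval between its two asymptotic slopes. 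Hence for any $\alpha\in(\inf_\nu\chi(\nu),\sup_\nu\chi(\nu))$ there is a (unique) $q=q(\alpha)$ with $P'_{f|K}(q\log\lvert f'\rvert)=\alpha$, and taking $\nu=\nu(\alpha)\eqdef\nu_q$ gives $\int_K\log\lvert f'\rvert\,d\nu=\alpha$, which is exactly the claim.

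I expect the main technical point to be the justification of the derivative formula and the strict convexity, i.e.\ the nondegeneracy input: one must know that $\log\lvert f'\rvert$ is \emph{not} cohomologous to a constant on $K$ so that the variance is positive and the pressure is strictly convex, and that the standard transfer-operator perturbation theory applies to give analyticity and the formula for $P'$. Both follow from classical thermodynamic formalism for uniformly expanding maps conjugate to topologically mixing subshifts of finite type with Hölder potentials (Ruelle, Bowen), and the non-cohomology-to-a-constant is part of the standing assumptions of the paper. Once these ingredients are in place the argument is just the elementary fact that a $C^1$ strictly increasing function attains every value strictly between its limits at $\pm\infty$; no further estimates are needed, and in particular the tempered/bounded distortion and the $\Lambda_m\to\Lambda$ approximation play no role here since we work entirely inside the fixed hyperbolic set $K$. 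Alternatively, one can cite \cite{Jen:01} directly, as the authors do, and merely remark that the hypotheses there (Hölder potential over a mixing SFT, expanding dynamics) are met by $f|K$.
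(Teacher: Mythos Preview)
The paper does not supply its own proof of this lemma: it is stated as a citation of Jenkinson~\cite{Jen:01} and used as a black box. Your sketch via analyticity of $q\mapsto P_{f|K}(q\log\lvert f'\rvert)$, the derivative formula $P'(q)=\int\log\lvert f'\rvert\,d\nu_q$, and strict convexity is the standard route to this fact and is correct; in that sense you have provided strictly more than the paper does. One small point: the standing non-cohomology assumption in the paper is for $\log\lvert f'\rvert$ on $\Lambda$, not on the subsystem $K$, so you cannot invoke it directly. However this is harmless, since the hypothesis $\inf_{\nu\in\cM(K)}\chi(\nu)<\sup_{\nu\in\cM(K)}\chi(\nu)$ already forces $\log\lvert f'\rvert$ not to be cohomologous to a constant on $K$ (otherwise the interval would be empty and the statement vacuous). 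Your closing remark that one may simply cite~\cite{Jen:01} is exactly what the authors do.
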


We finally collect results on the dimension of level sets for hyperbolic systems. 

\begin{proposition}\label{proclass}
	For every $\alpha\in(\inf_{\nu\in\cM(K)}\chi(\nu),\sup_{\nu\in\cM(K)}\chi(\nu))$ we have
	\begin{equation}\label{hoh}
	\dim_{\rm H}\left(K\cap \cL(\alpha)\right)
	= \frac{1}{\alpha}
		\inf_{d\in\bR} \left( P_{f|K}(\varphi_d) +d\alpha\right).
	\end{equation}
\end{proposition}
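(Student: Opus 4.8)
The plan is to establish the two inequalities separately, reducing everything to the uniformly hyperbolic situation for $f|K$, where the classical multifractal formalism of Pesin--Weiss applies. Since $f|K$ is uniformly expanding and (via the tempered distortion property inherited from $f|\Lambda$, together with the bounded distortion assumed for the subsystems) has bounded distortion, and since $f|K$ is conjugate to a topologically mixing subshift of finite type, the level sets $\cL(\alpha)$ restricted to $K$ are amenable to the standard machinery. The quantity on the right-hand side of~\eqref{hoh} is, by definition, $F_{f|K}(\alpha)$ in the notation of the paper, so the claim is precisely $\dim_{\rm H}(K\cap\cL(\alpha)) = F_{f|K}(\alpha)$ for $\alpha$ in the interior of the Lyapunov spectrum of $f|K$.

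For the lower bound, I would invoke Lemma~\ref{lem:Jen}: for $\alpha$ in the interior of the spectrum there is an exponent $q=q(\alpha)$ and an equilibrium state $\nu$ for $q\log\lvert f'\rvert$ (with respect to $f|K$) with $\int_K\log\lvert f'\rvert\,d\nu=\alpha$. This $\nu$ is ergodic, has positive entropy and positive Lyapunov exponent, and is supported on $K$; by the Birkhoff ergodic theorem $\nu$-a.e.\ point lies in $\cL(\alpha)$, so $\nu(K\cap\cL(\alpha))=1$. The Young/Hofbauer--Raith dimension formula for hyperbolic systems gives $\dim_{\rm H}\nu = h_\nu(f)/\chi(\nu) = h_\nu(f)/\alpha$, and since $\nu$ is an equilibrium state for $q\log\lvert f'\rvert = -\varphi_{-q}$ with $P_{f|K}(\varphi_{-q}) = h_\nu(f) - q\int_K\log\lvert f'\rvert\,d\nu = h_\nu(f) - q\alpha$, we get $h_\nu(f) = P_{f|K}(\varphi_{-q}) + q\alpha$; hence $\dim_{\rm H}(K\cap\cL(\alpha)) \ge \dim_{\rm H}\nu = \frac{1}{\alpha}(P_{f|K}(\varphi_{-q}) + q\alpha) \ge \frac{1}{\alpha}\inf_{d}(P_{f|K}(\varphi_d)+d\alpha)$, and a short convexity argument (the infimum is attained at $d = -q$ by the tangency condition $P_{f|K}'(-q) = -\alpha$) shows the last inequality is in fact an equality.

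For the upper bound, I would use a standard covering argument. Fix $d$ and the $\exp(P_{f|K}(\varphi_d) - \varphi_d)$-conformal measure $\nu_d$ for $f|K$, which exists since $f|K$ is expansive and open. As computed in the proof of Proposition~\ref{convpress}, the tempered distortion property gives $\nu_d(\Delta_n(x)) \asymp e^{-nP_{f|K}(\varphi_d)}\lvert(f^n)'(x)\rvert^{-d}$ up to the factor $e^{\pm n\rho_n}$. For $x\in K\cap\cL(\alpha)$ one has $\lvert(f^n)'(x)\rvert = e^{n\alpha+o(n)}$, and the diameter of $\Delta_n(x)$ is comparable to $\lvert(f^n)'(x)\rvert^{-1}$ by bounded distortion, so $\nu_d(\Delta_n(x)) \approx \lvert\Delta_n(x)\rvert^{(P_{f|K}(\varphi_d)+d\alpha)/\alpha}$. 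Applying the mass distribution principle (using that $\rho_n\to 0$ and the cylinders $\Delta_n(x)$ form a basis of neighborhoods shrinking in diameter, by uniform expansion) yields $\dim_{\rm H}(K\cap\cL(\alpha)) \le \frac{1}{\alpha}(P_{f|K}(\varphi_d)+d\alpha)$; taking the infimum over $d$ finishes the bound. The main technical point — and the step I expect to require the most care — is handling the $o(n)$ fluctuations in $\log\lvert(f^n)'(x)\rvert$ uniformly: one stratifies $K\cap\cL(\alpha)$ into countably many pieces on which the convergence $\frac1n\log\lvert(f^n)'(x)\rvert\to\alpha$ is controlled (e.g.\ via $K\cap\cL(\alpha) = \bigcup_N\bigcap_{n\ge N}\{x : \lvert \frac1n\log\lvert(f^n)'(x)\rvert - \alpha\rvert < \varepsilon\}$), estimates the Hausdorff dimension of each piece with the error $\varepsilon$ built in, uses countable stability of $\dim_{\rm H}$, and then lets $\varepsilon\to 0$. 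This is routine in the uniformly hyperbolic setting but is exactly where the hypothesis that $\alpha$ lies in the open spectrum and that $f|K$ is uniformly expanding gets used.
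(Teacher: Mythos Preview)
Your argument is essentially correct, but there is a sign slip in the lower-bound bookkeeping: since $\varphi_d=-d\log\lvert f'\rvert$, the potential $q\log\lvert f'\rvert$ equals $\varphi_{-q}$, not $-\varphi_{-q}$. Hence $P_{f|K}(\varphi_{-q})=h_\nu(f)+q\alpha$, so $h_\nu(f)=P_{f|K}(\varphi_{-q})-q\alpha$ and $\dim_{\rm H}\nu=\frac{1}{\alpha}\bigl(P_{f|K}(\varphi_{-q})-q\alpha\bigr)$, which is exactly the value of $\frac{1}{\alpha}\bigl(P_{f|K}(\varphi_d)+d\alpha\bigr)$ at $d=-q$. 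With this corrected, your chain of inequalities goes through.

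On the approach: your lower bound via Jenkinson plus the Hofbauer--Raith dimension formula is the same as the paper's. Your upper bound, however, differs genuinely. The paper does not run a direct conformal-measure/covering argument here; instead it invokes \cite[Theorem~6]{BarSau:01} to obtain the conditional variational principle
\[
\dim_{\rm H}\bigl(K\cap\cL(\alpha)\bigr)=\max\Bigl\{\frac{h_\nu(f)}{\alpha}\colon \nu\in\cM(K),\ \chi(\nu)=\alpha\Bigr\},
\]
and then bounds this maximum above by $\frac{1}{\alpha}\bigl(P_{f|K}(\varphi_d)+d\alpha\bigr)$ for every $d$ via a one-line application of the variational principle for pressure. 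Your route (pointwise-dimension estimate with the conformal measure $\nu_d$, stratification by $N$, countable stability of Hausdorff dimension) is exactly the mechanism the paper reserves for the nonhyperbolic set in Proposition~\ref{prop:dimH}; transplanting it to the hyperbolic subsystem $K$ is perfectly legitimate and makes the proposition self-contained, at the cost of repeating work that \cite{BarSau:01} already packages. The paper's choice buys brevity; yours buys independence from the external reference.
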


\begin{proof}
	From~\cite[Theorem 6]{BarSau:01} it follows that for arbitrary $d\in\bR$
	\[\begin{split}
	\dim_{\rm H}&\left(K\cap \cL(\alpha)\right)
	=\max\left\{ \frac{h_\nu(f)}{\int_K\log\lvert f'\rvert \,d\nu}
			\colon \nu\in\cM(K), 
				\int_K\log\lvert f'\rvert \,d\nu = \alpha
				\right\}\\
	&=\frac{1}{\alpha}
		\max\left\{ h_\nu(f) + d\int_K\log\lvert f'\rvert\,d\nu
			\colon \nu\in\cM(K), \chi(\nu) = \alpha
				\right\}	-d\\
	&\le\frac{1}{\alpha}
		\max\left\{ h_\nu(f) + d\int_K\log\lvert f'\rvert\,d\nu
			\colon \nu\in\cM(K)\right\}-d\\
	&=\frac{1}{\alpha}\left( P_{f|K}(d\log\lvert f'\rvert) -d\alpha\right),
	\end{split}\]
	where we applied the variational principle for the topological pressure. So we obtain
	\[
	\dim_{\rm H}\left(K\cap \cL(\alpha)\right) \le 
	\frac{1}{\alpha}
	\inf_{d\in\bR}\left( P_{f|K}(\varphi_d) + d\alpha\right).
	\]
	Lemma~\ref{lem:Jen}  implies that
	\[\begin{split}
	\max\left\{ h_\nu(f)
			\colon \nu\in\cM(K), \chi(\nu) = \alpha
				\right\}	
	&\ge 	P_{f|K}(-q\log\lvert f'\rvert) +q\alpha		\\
	&\ge	\inf_{d\in\bR}\left( P_{f|K}(\varphi_d)+d\alpha\right).
	\end{split}\]
	This finishes the proof.
\end{proof}

%We mention two further equivalent representations of equation~\eqref{hoh}. It is easy to see that
%\[
%\frac{1}{\alpha}\inf_{d\in\bR}\left(P_{f|K}(\varphi_d)+d\alpha\right)
%= \inf_{d\in\bR} \Big\{\delta(d,\alpha)\colon 
%	P_{f|K}\left((d-\delta(d,\alpha))\log\lvert f'\rvert - d\alpha \right)=0	\Big\}
%\]
%and that $\widehat\delta\eqdef\dim_{\rm H}(K\cap\cL(\alpha))$ satisfies
%\[
%\inf_{d\in\bR}P_{f|K}((d-\widehat\delta)\log\lvert f'\rvert -d\alpha) =0
%\]

Given  any H\"older continuous potential $\psi\colon K\to \bR$, there exists a unique ergodic equilibrium state $\mu\in\cM(K)$ which moreover has the Gibbs property, that is, for which there exists a constant $c>1$ such that for all $x\in K$ and every $n\ge 1$ we have
\begin{equation}\label{gibbs}
	c^{-1}\le 
	\frac{\mu(\Delta_n(x)\cap K)}
		{\exp\left(-nP_{f|K}(\psi)+S_n\psi(x)\right)}\le
		c\, .
\end{equation}
We refer for example to~\cite{Pes:96} for more details and references of the above results.
	
%---------------------------------------------------------------------------------------------------------
\section{Upper bound for the dimension}\label{sec:upp}
%---------------------------------------------------------------------------------------------------------

\begin{proposition}\label{prop:dimH}
  We have for  every $0\le\alpha\le\beta$, $\beta>0$
	\[
	\dim_{\rm H} \ccL(\alpha,\beta) \le \max_{\alpha\le q \le \beta} F(q) 
	\quad\text{ and }\quad
	\dim_{\rm H} \cL(\alpha,\beta) \le \min_{\alpha\le q \le \beta} F(q) .
  	\] 
\end{proposition}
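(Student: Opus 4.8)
The plan is to bound the Hausdorff dimension from above by constructing, for each target exponent, an efficient cover of the level set by cylinders and then applying a pressure/conformal-measure estimate.

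First I would fix $0\le\alpha\le\beta$, $\beta>0$, and set $D^+\eqdef\max_{\alpha\le q\le\beta}F(q)$, $D^-\eqdef\min_{\alpha\le q\le\beta}F(q)$. Using the geometric description of $F$ from Section~\ref{sec:press} (and continuity/convexity of $P(d)=P(-d\log|f'|)$ from the Proposition there), I would choose, for $\ccL$, a single exponent $q^*\in[\alpha,\beta]$ with $F(q^*)=D^+$ together with a supporting parameter $d^*\in\bR$ for which $F(q^*)=(P(d^*)+q^*d^*)/q^*$, i.e. $D^+q^* = P(d^*)+q^*d^*$; then for \emph{every} $s$, $D^+ s \ge P(s/(\,\cdot\,))$... more precisely the convexity of $P$ gives $D^+ q \le P(d^*) + q d^*$ for all $q$ in a suitable range, which is exactly what the cover will exploit. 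For the lower exponent $\beta$ case (the set $\cL(\alpha,\beta)$ with $\alpha<\beta$ containing irregular points) I would instead use that $\underline\chi(x)=\alpha$ forces infinitely many scales $n$ with $|(f^n)'(x)|\approx e^{n\alpha}$ and $\overline\chi(x)=\beta$ forces infinitely many scales with $\approx e^{n\beta}$, and pick the parameter $d$ supporting $F$ at the value of $q\in[\alpha,\beta]$ that minimizes $F$; at that $q$ one has $D^- q = P(d)+qd$ and $D^- s \le P(d')+sd'$ fails to help, so the point is rather that for the \emph{worst} scale we still get the bound $D^-$.

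The core estimate is the following. For the $\ccL$-bound, take the $e^{P(d^*)-\varphi_{d^*}}$-conformal measure $\nu=\nu_{d^*}$ on $\Lambda$ (existence by \cite{DenUrb:91}, since $f|\Lambda$ is expansive and open). By the tempered distortion property~\eqref{t35}, for every cylinder $\Delta_n(x)$ meeting $\Lambda$,
\[
e^{-n\rho_n}\le\frac{\nu(\Delta_n(x))}{\exp(-nP(d^*))\,|(f^n)'(x)|^{-d^*}}\le e^{n\rho_n}.
\]
Now fix $x\in\ccL(\alpha,\beta)$: then $0<\underline\chi(x)\le\beta$ and $\overline\chi(x)\ge\alpha$, so for a suitable infinite set of $n$ the value $\tfrac1n\log|(f^n)'(x)|$ lies in an arbitrarily small neighborhood of some $q=q(x,n)\in[\alpha,\beta]$, whence $|(f^n)'(x)|^{-d^*}\le\exp(n(-d^*q+o(1)))$ when $d^*\ge0$ (the case $d^*<0$ is symmetric), and combining with $\text{diam}\,\Delta_n(x)\le C|(f^n)'(x_n)|^{-1}$ (bounded distortion gives $|\Delta_n(x)|\asymp|(f^n)'|^{-1}$ on the cylinder) yields
\[
\nu(\Delta_n(x))\ge \exp\!\big(-n(P(d^*)+d^*q+o(1))\big)\ge (\operatorname{diam}\Delta_n(x))^{\,(P(d^*)+d^*q)/q+o(1)}\ge(\operatorname{diam}\Delta_n(x))^{D^++\varepsilon}
\]
using $(P(d^*)+d^*q)/q\le F(q^*)=D^+$ by the supporting-line property of $F$. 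Extracting from $\{\Delta_n(x)\}_{x,n}$ a Vitali-type subcover (or just: each $x$ has arbitrarily small cylinders with this property, and cylinders of a fixed level are disjoint, so a Besicovitch/greedy selection works) gives $\mathcal H^{D^++\varepsilon}(\ccL(\alpha,\beta))\lesssim\nu(\Lambda)=1<\infty$, hence $\dim_{\rm H}\ccL(\alpha,\beta)\le D^++\varepsilon$; let $\varepsilon\to0$. For $\cL(\alpha,\beta)$ the argument is the same but one uses that \emph{both} $\alpha$ and $\beta$ are attained along subsequences: choosing the parameter $d$ that supports $F$ at the minimizing $q_0\in[\alpha,\beta]$, the bad scales where $\tfrac1n\log|(f^n)'(x)|$ is far from $q_0$ are still controlled because for any $q\in[\alpha,\beta]$ one has $(P(d)+dq)/q\ge\min_{[\alpha,\beta]}F=D^-$ only at $q_0$; so instead one must cover using scales adapted to each point, taking $n$ with $\tfrac1n\log|(f^n)'(x)|$ close to $q_0$ — such $n$ exist infinitely often precisely because $\underline\chi(x)\le q_0\le\overline\chi(x)$ by the intermediate value behavior of the Birkhoff average $\tfrac1n\log|(f^n)'(x)|$ (its $\liminf$ is $\alpha\le q_0$ and $\limsup$ is $\beta\ge q_0$, and consecutive values change by $o(1)$ since $\log|f'|$ is bounded, so every value in $[\alpha,\beta]$ is a sublimit). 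Then the same conformal-measure estimate gives the $D^-$ bound.

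The main obstacle is the second (irregular) case: for $\ccL$ with $d^*\ge 0$ fixed one simply wants $|(f^n)'(x)|$ \emph{small}, i.e. the slow scales, which are guaranteed by $\underline\chi(x)\le\beta$; but when the supporting parameter $d^*$ is negative (which happens when the maximizing $q^*$ is near $\alpha^+$) one needs the \emph{fast} scales, guaranteed by $\overline\chi(x)\ge\alpha$ — and one must check these two regimes are exhaustive and that in each the chosen infinite family of cylinders genuinely covers the set and can be thinned to keep $\sum(\operatorname{diam})^{D+\varepsilon}$ bounded. This is a standard but delicate covering argument: I would handle it by, for each $x$ and each large $N$, selecting the first $n\ge N$ along the good subsequence, noting the resulting cylinders (over all $x$) at a fixed level are pairwise disjoint so for each level the sum of their $\nu$-measures is $\le1$, and summing a geometric-type series in the level using $\operatorname{diam}\Delta_n\le C\lambda^{-n}$... except that uniform expansion $\lambda>1$ is only available on the $\Lambda_m$, not on $\Lambda$; on $\Lambda$ one only has the generator/tempered-distortion property, so the "diameters shrink" must be extracted from $\underline\chi(x)>0$ pointwise rather than uniformly, which forces the covering to be organized point-by-point with a Besicovitch covering lemma rather than level-by-level. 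Reconciling that with the need for disjointness is the technical heart of the proof.
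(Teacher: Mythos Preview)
Your reduction for $\cL(\alpha,\beta)$ is correct and matches the paper: since $\underline\chi(x)=\alpha\le q_0\le\beta=\overline\chi(x)$ and consecutive Birkhoff averages differ by $O(1/n)$, every $q_0\in[\alpha,\beta]$ is a sublimit, so $\cL(\alpha,\beta)\subset\ccL(q_0,q_0)$; taking $q_0$ to be the minimizer of $F$ on $[\alpha,\beta]$ gives the $\min F$ bound from the $\ccL$ bound. The paper states exactly this reduction in one line.

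The genuine gap is in your $\ccL(\alpha,\beta)$ argument. Fixing a \emph{single} parameter $d^*$ supporting $F$ at the maximizer $q^*$ does \emph{not} yield the claimed inequality $(P(d^*)+d^*q)/q\le D^+$ for all $q\in[\alpha,\beta]$. The Legendre duality gives the opposite: $(P(d)+dq)/q\ge F(q)$ for every $d$, with equality only at the supporting $d$. Concretely, $(P(d^*)+d^*q)/q=P(d^*)/q+d^*$ is monotone in $q$ and exceeds $D^+$ on one side of $q^*$ (whenever $P(d^*)\ne0$). Now a point $x\in\ccL(\alpha,\beta)$ may be a regular point with exponent $\gamma\in[\alpha,\beta]$, $\gamma\ne q^*$; the only available sublimit is $\gamma$, and your estimate at those scales gives only $\underline d_{\nu_{d^*}}(x)\le P(d^*)/\gamma+d^*$, which can be strictly larger than $D^+$. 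Your proposed fix via choosing ``slow'' versus ``fast'' scales according to the sign of $d^*$ does not help: if $P(d^*)>0$ and $\underline\chi(x)$ is very small, $P(d^*)/\underline\chi(x)+d^*$ is arbitrarily large.

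The paper's remedy is to use not one but a countable family of conformal measures. One covers $(\max\{0,\alpha-\delta\},\beta+\delta)$ by small intervals $(q_i',q_i'')$, chooses for each a parameter $d_i$ with $P(d_i)/q_i''+d_i\le F(q_i'')+\varepsilon$, and sets $\nu\eqdef\sum_i 2^{-i}\nu_{d_i}$. For any $x\in\ccL(\alpha,\beta)$ its sublimit $q(x)$ falls in some $(q_i',q_i'')$, and the corresponding $\nu_{d_i}$ (hence $\nu$) gives $\underline d_\nu(x)\le F(q(x))+2\varepsilon\le D^++2\varepsilon$. This is the missing idea.

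Finally, your worries about Vitali/Besicovitch covers and the lack of uniform contraction on $\Lambda$ are unnecessary. Once you have, for every $x$ in the set, arbitrarily small cylinders $\Delta_n(x)$ with $\nu(\Delta_n(x))\ge(\diam\Delta_n(x))^{s+\varepsilon}$, you conclude $\underline d_\nu(x)\le s+\varepsilon$ and invoke the standard Frostman-type bound $\dim_{\rm H}\{x:\underline d_\nu(x)\le t\}\le t$; no disjointness, summation over levels, or uniform expansion is needed.
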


\begin{proof}
Note first that $\cL(\alpha, \beta)\subset \ccL(q,q)$ for any $q\in [\alpha, \beta]\setminus \{0\}$. 
Hence, the second assertion follows from the first one.

We now prove the first assertion. For a point $x\in \ccL(\alpha, \beta)$ there exists a positive number $q=q(x)\in[\alpha,\beta]$ and a sequence $(n_k)_k$ for which we have  
\begin{equation} \label{eqn:gh}
\lim_{k\to\infty}\frac{1}{n_k}\log\lvert (f^{n_k})'(x)\rvert=q
\end{equation}
Let $\delta\in(0,q)$. There exists $N=N(x)\ge 1$ such that
\begin{equation} \label{est1}
e^{n_k(q-\delta)}\le \lvert(f^{n_k})'(x)\rvert \le e^{n_k(q+\delta)}
\end{equation}
for every $n_k\ge N$. 
By the tempered distortion property~\eqref{t35} we obtain
\begin{equation} \label{est2}
\lvert \Lambda\rvert \,\lvert(f^{n_k})'(x)\rvert^{-1} \, e^{-n_k\rho_{n_k}}
%\le \int_{\Delta_{n_k}(x)}\lvert (f^{n_k})'(y)\rvert \,dy 
\le \lvert \Delta_{n_k}(x)\rvert 
\le \lvert \Lambda\rvert \,\lvert(f^{n_k})'(x)\rvert^{-1} \, e^{n_k\rho_{n_k}} .
\end{equation}
Using again the tempered distortion property~\eqref{t35} we can conclude that the  $\exp\left(P(\varphi_d)-\varphi_d\right)$-conformal measure satisfies
	\begin{equation}\label{huch}
	 e^{-nP(\varphi_d)} \lvert (f^n)'(x)\rvert^{-d} e^{ -n\rho_n}\le 
	\nu_d(\Delta_n(x))\le
	e^{-nP(\varphi_d)} \lvert (f^n)'(x)\rvert^{-d} e^{ n\rho_n}.
	\end{equation}
We obtain
\begin{equation}\label{jh}
P(\varphi_d) + \lim_{k\to\infty}\frac{1}{n_k}\log\nu_d(\Delta_{n_k}(x)) 
= -d\,q,
\end{equation}
and in particular the limit on the left hand side exists. Hence, possibly after increasing $N$, we have
\[
 e^{-n_k\left( P(\varphi_d) +dq + \delta \right) } \le \nu_d(\Delta_{n_k}(x)) 
\]
for every $n_k\ge N$. 

With~\eqref{est2} and~\eqref{huch}  we can conclude that
\[
\nu_d(\Delta_{n_k}(x))
\ge e^{-n_kP(\varphi_d)} \lvert\Delta_{n_k}(x)\rvert^d
 	\lvert\Lambda\rvert^{-d} \, e^{-n_k\rho_{n_k} (1+\lvert d\rvert)}
	 .
\]
Case 1)
Let us first assume that $P(\phi_d)\geq 0$.
Using~\eqref{est1} and~\eqref{est2} we can estimate 
\[
e^{-n_k P(\varphi_d)}\ge 
\left( \lvert\Delta_{n_k}(x)\rvert\,\lvert\Lambda\rvert^{-1} 
	e^{-n_k\rho_{n_k}}
\right)^{P(\varphi_d)/(q-\delta)}.
\]
Thus we obtain
\[
\nu_d(\Delta_{n_k}(x))\ge
\left(
\lvert\Delta_{n_k}(x)\rvert \, \lvert\Lambda\rvert^{-1} 
\right)^{d+\frac{P(\varphi_d)}{q-\delta}}
	e^{-n_k\rho_{n_k}\left(\frac{P(\varphi_d)}{q-\delta} +(1+\lvert d\rvert)\right)} .
\]
There exists $\varepsilon=\varepsilon(\delta)>0$ such that, perhaps after increasing $N$ again, we have 
\[
\lvert\Lambda\rvert^{-d-\frac{P(\varphi_d)}{q-\delta}}
	e^{-n_k\rho_{n_k}\left(\frac{P(\varphi_d)}{q-\delta} +(1+\lvert d\rvert)\right)}
\ge 	\lvert\Delta_{n_k}(x)\rvert^\varepsilon
\]
for every $n_k\ge N$.
Note that $\Delta_{n_k}(x)\subset B(x,\lvert\Delta_{n_k}(x)\rvert)$.
Hence,  we obtain the following upper bound for the lower pointwise dimension at $x$
\begin{equation} \label{esto3}
\underline{d}_{\nu_d}(x) \le \frac{P(\varphi_d)}{q-\delta}+d+\varepsilon.
\end{equation}
Case 2) Let us now assume that  $P(\varphi_d)< 0$. Using~\eqref{est1} and~\eqref{est2} we can estimate 
\[
e^{-n_k P(\varphi_d)}\ge 
\left( \lvert\Delta_{n_k}(x)\rvert\lvert\Lambda\rvert^{-1} \right)
	^{\frac{P(\varphi_d)}{q+\delta}}
	e^{n_k\rho_{n_k}\frac{P(\varphi_d)}{q+\delta}}.
\]
Thus we obtain
\[
\nu_d(\Delta_{n_k}(x))\ge
%\lvert\Delta_{n_k}(x)\rvert^{d+\frac{P(\varphi_d)}{q+\delta}}
%\lvert\Lambda\rvert^{-d-\frac{P(\varphi_d)}{q+\delta}}
%	e^{-n_k\rho_{n_k}\left(\frac{P(\varphi_d)}{q+\delta} +(1+\lvert d\rvert)\right)} \ge
\lvert\Delta_{n_k}(x)\rvert^{d+\frac{P(\varphi_d)}{q+\delta}+\varepsilon}
\]
for every $n_k\ge N$, possibly after increasing $N$, and hence in this case
\begin{equation}
 \label{estuo3}
\underline{d}_{\nu_d}(x) \le \frac{P(\varphi_d)}{q+\delta}+d+\varepsilon.
\end{equation}

In both cases, continuity of $d\mapsto P(\varphi_d)$ implies that for any given sufficiently small interval $(q',q'')\subset (\max\{0,\alpha-\delta\}, \beta+\delta)$ there exist $d\in\bR$ such that
\[
\frac{1}{q''} P(\varphi_d) + d \leq F(q'') + \varepsilon .
\]
We can then choose a countable family of intervals $(q_i', q_i'')$, covering $(\max\{0,\alpha-\delta\}, \beta+\delta)$, and consider the corresponding sequence $(d_i)_i$.
Define
\[
\nu \eqdef \sum_{i=1}^\infty 2^{-i} \nu_{d_i}
\]
We have
\[
\underline{d}_\nu(x) \le 
\sup_{i\ge 1} \underline{d}_{\nu_{d_i}}(x) \le
\max_{\alpha-\delta\leq q \leq \beta+\delta} F(q) +2\varepsilon,
\]
where the second inequality follows from~\eqref{esto3},~\eqref{estuo3}.
This implies that 
\[
\dim_{\rm H}\ccL(\alpha, \beta) \le 
\max_{\alpha-\delta\leq q \leq \beta+\delta} F(q) +2\varepsilon.
\]
Since $\delta$ and $\varepsilon$ can be chosen arbitrarily small, this finishes the proof.
\end{proof}

Given $\alpha>0$ we  denote%\marginpar{i changed $\alpha\ge 0$ to $\alpha>0$}
\begin{equation}\label{hich}
\ccL(\alpha) \eqdef  \left\{x\in \Lambda\colon 
\underline\chi(x)=0, \, \overline\chi(x) \ge \alpha\right\} .
\end{equation}
The following proposition is proved in a similar way to Proposition~\ref{prop:dimH}.

\begin{proposition} \label{prop:ga}
We have for every $\alpha> 0$%\marginpar{i changed $\alpha\ge 0$ to $\alpha>0$}
\[
\dim_{\rm H}\ccL(\alpha) \le F(\alpha).
\]
\end{proposition}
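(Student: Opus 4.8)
The plan is to mimic the proof of Proposition~\ref{prop:dimH}, but to exploit the fact that now the lower Lyapunov exponent is exactly zero. First I would fix a point $x\in\ccL(\alpha)$. Because $\underline\chi(x)=0$, there is a sequence $(m_k)_k$ along which $\frac1{m_k}\log\lvert(f^{m_k})'(x)\rvert\to 0$; because $\overline\chi(x)\ge\alpha$, there is also a sequence $(n_k)_k$ along which $\frac1{n_k}\log\lvert(f^{n_k})'(x)\rvert\to q$ for some $q=q(x)\ge\alpha$ (after passing to a subsequence, using that $\overline\chi(x)$ could be $+\infty$, in which case one simply takes $q$ as large as one likes; the monotonicity $\alpha\le q$ is what matters, together with $F$ being eventually $-\infty$ or at least not increasing past the spectrum). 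The key new feature is the slow sequence $(m_k)_k$: along it the cylinders $\Delta_{m_k}(x)$ shrink subexponentially, i.e. $\lvert\Delta_{m_k}(x)\rvert=e^{o(m_k)}$ by the tempered distortion estimate~\eqref{est2}, while the $\exp(P(\varphi_d)-\varphi_d)$-conformal measure $\nu_d$ assigns them mass $e^{-m_kP(\varphi_d)+o(m_k)}$ by~\eqref{huch}.

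The main step is a case distinction on the sign of $P(\varphi_d)$, exactly as in Proposition~\ref{prop:dimH}. If $P(\varphi_d)\ge 0$, then along the slow sequence $(m_k)$ we get $\nu_d(\Delta_{m_k}(x))\ge e^{-m_kP(\varphi_d)+o(m_k)}$, and since $\lvert\Delta_{m_k}(x)\rvert=e^{o(m_k)}$, for any $\varepsilon>0$ this exceeds $\lvert\Delta_{m_k}(x)\rvert^{d+\varepsilon}$ once $k$ is large; hence $\underline d_{\nu_d}(x)\le d+\varepsilon$ — note the pressure term has dropped out because the cylinder is subexponentially small. If $P(\varphi_d)<0$, then we cannot use the slow sequence (the mass would be too small), so instead we use the fast sequence $(n_k)$ with exponent $q$ and the estimate~\eqref{estuo3} from the previous proof, giving $\underline d_{\nu_d}(x)\le \frac{P(\varphi_d)}{q+\delta}+d+\varepsilon$. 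Combining, for \emph{every} $d$ with $P(\varphi_d)\ge 0$ we get a bound of $d$, and for $d$ with $P(\varphi_d)<0$ we get $\frac{P(\varphi_d)}{q+\delta}+d$; optimizing over such $d$ yields a bound of essentially $\max\{d_0,\ F(q)\}$ for the lower pointwise dimension of a suitable countable convex combination $\nu=\sum 2^{-i}\nu_{d_i}$.

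The remaining point is to check that this maximum is in fact just $F(\alpha)$. By the geometric description of $F$ in Section~\ref{sec:press}, $F$ is non-increasing (it is concave with the relevant monotonicity on the spectrum), so $F(q)\le F(\alpha)$ for $q\ge\alpha$; and $F(0)=d_0$ with $F$ non-increasing gives $d_0=F(0)\ge F(\alpha)$... which is the wrong direction, so I would instead argue more carefully: for the values $d$ with $P(\varphi_d)\ge0$ the sharpest bound is obtained by taking $d$ close to $d_0=\inf\{d:P(\varphi_d)=0\}$, giving the bound $d_0$; but we only need this for $d$ for which the infimum defining $F(\alpha)$ is nearly attained, and since $F(\alpha)\le d_0$ always (because $F(\alpha)=\frac1\alpha\inf_d(P(\varphi_d)+\alpha d)\le P(\varphi_{d_0})+d_0 = d_0$ using $P(\varphi_{d_0})=0$ in the parabolic case, with the analogous estimate in general), the maximum of $d_0$-type bounds and $F(q)$-type bounds is controlled by a single application of the defining infimum for $F$ at the value $q$, which in turn is $\le F(\alpha)$ by monotonicity. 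I would then let the auxiliary parameters $\delta,\varepsilon\to0$ and appeal to the mass distribution principle (lower pointwise dimension of a measure bounds Hausdorff dimension of its support from below, hence of any set it charges from above in the relevant sense), exactly as at the end of Proposition~\ref{prop:dimH}.

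The step I expect to be the genuine obstacle is the bookkeeping that fuses the two regimes into the clean bound $F(\alpha)$: one must be careful that the ``fast'' sequence $(n_k)$ and the ``slow'' sequence $(m_k)$ are used for \emph{disjoint} ranges of $d$ (those with $P(\varphi_d)<0$ versus $P(\varphi_d)\ge0$), that the single measure $\nu$ simultaneously witnesses the bound for all these $d$, and that the optimization $\inf_d$ really collapses to $F(\alpha)$ rather than $\max\{d_0,F(\alpha)\}$ — which requires invoking precisely the monotonicity/concavity structure of $F$ recorded in Section~\ref{sec:press} and the fact that $F(\alpha)\le d_0$. Everything else is a routine repetition of the distortion and conformality estimates~\eqref{est1}--\eqref{huch} already established.
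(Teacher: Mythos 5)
There is a genuine gap, and it sits exactly where you put the main step: the ``slow sequence'' estimate in your Case 1 is false. Along a sequence $(m_k)_k$ with $\frac{1}{m_k}\log\lvert(f^{m_k})'(x)\rvert\to 0$, estimate~\eqref{huch} gives $\nu_d(\Delta_{m_k}(x))\approx e^{-m_kP(\varphi_d)}\,e^{o(m_k)}$, while $\lvert\Delta_{m_k}(x)\rvert^{d+\varepsilon}=e^{o(m_k)}$. If $P(\varphi_d)>0$ the left-hand side is exponentially small and the right-hand side only subexponentially small, so the inequality $\nu_d(\Delta_{m_k}(x))\ge\lvert\Delta_{m_k}(x)\rvert^{d+\varepsilon}$ fails for large $k$; in fact $\log\nu_d(\Delta_{m_k}(x))/\log\lvert\Delta_{m_k}(x)\rvert\to+\infty$, so the slow sequence yields no upper bound at all on $\underline d_{\nu_d}(x)$ (and for $P(\varphi_d)=0$ the ratio is $d+O(\rho_{m_k}/L_{m_k}(x))$ with $L_{m_k}(x)\to 0$, which you also do not control). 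This is not a repairable bookkeeping issue: whenever $\ccL(\alpha)\neq\emptyset$ there exists an invariant measure with zero Lyapunov exponent (a weak$\ast$ limit of Birkhoff averages along the liminf subsequence), hence $P(\varphi_d)\ge 0$ for \emph{every} $d$, so your Case 2 is vacuous and the whole proof rests on the broken Case 1. Note also that if your Case 1 bound $\underline d_{\nu_d}(x)\le d+\varepsilon$ held for all $d$ with $P(\varphi_d)\ge 0$, it would give $\dim_{\rm H}\ccL(\alpha)\le\inf\{d\colon P(\varphi_d)\ge 0\}=-\infty$ in the parabolic case, which is absurd.

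The intended ``similar'' argument does not use the liminf subsequence at all. Since $\log\lvert f'\rvert$ is bounded on $\Lambda$, the increments of $n\mapsto\frac1n\log\lvert(f^n)'(x)\rvert$ are $O(1/n)$, so the set of accumulation points of this sequence is the whole interval $[\underline\chi(x),\overline\chi(x)]=[0,\overline\chi(x)]\ni\alpha$. Hence for every $x\in\ccL(\alpha)$ there is a subsequence $(n_k)_k$ along which~\eqref{eqn:gh} holds with $q=\alpha$ exactly, and Cases 1 and 2 of the proof of Proposition~\ref{prop:dimH} then give $\underline d_{\nu_d}(x)\le P(\varphi_d)/(\alpha\mp\delta)+d+\varepsilon$ for every $d$; choosing $d$ near the infimum defining $F(\alpha)$ finishes the proof. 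Your alternative of taking a limit point $q\ge\alpha$ and invoking monotonicity of $F$ in the parabolic case would also work, but only if you run \emph{both} pressure cases along that fast subsequence rather than along the slow one.
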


%---------------------------------------------------------------------------------------------------------
\section{Lower bound for the dimension}\label{sec:low}
%---------------------------------------------------------------------------------------------------------

%---------------------------------------------------------------------------------------------------------
\subsection{The interior of the spectrum -- regular points}
%---------------------------------------------------------------------------------------------------------

\begin{proposition}\label{prop:lb}
	For $\alpha>\alpha^-$ we have
	\[
	\dim_{\rm H}\cL(\alpha)\ge F(\alpha).
	\]
\end{proposition}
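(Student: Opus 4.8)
The strategy is to exhaust $\Lambda$ by the hyperbolic subsystems $\Lambda_m$ and apply the known multifractal formalism on each of them. First I would fix $\alpha>\alpha^-$ and choose $\varepsilon>0$. Since $\alpha>\alpha^-=\inf_{m}\alpha_m^-$ (Lemma~\ref{lem:apm}), there exists $m$ with $\alpha_m^-<\alpha$. If in addition $\alpha<\alpha_m^+$, then $\alpha$ lies in the open interval $(\inf_{\nu\in\cM(\Lambda_m)}\chi(\nu),\sup_{\nu\in\cM(\Lambda_m)}\chi(\nu))=(\alpha_m^-,\alpha_m^+)$, so Proposition~\ref{proclass} applies with $K=\Lambda_m$ and yields
\[
\dim_{\rm H}\cL(\alpha)\ge \dim_{\rm H}\bigl(\Lambda_m\cap\cL(\alpha)\bigr)
=\frac{1}{\alpha}\inf_{d\in\bR}\bigl(P_m(\varphi_d)+d\alpha\bigr)=F_m(\alpha).
\]

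\textbf{Passing to the limit.} The point $\alpha$ being fixed with $\alpha>\alpha^-$, one has $\alpha\in(\alpha^-,\alpha^+)$ provided $\alpha<\alpha^+$; the case $\alpha\ge\alpha^+$ is vacuous since then $\cL(\alpha)$ would be empty (no invariant measure, hence by the variational/pressure characterization no regular point, has exponent exceeding $\alpha^+$ — here one uses that every regular point carries the exponent of some measure supported on its orbit closure, or more directly that $\cL(\alpha)\subset\ccL(\alpha,\alpha)$ whose dimension bound from Proposition~\ref{prop:dimH} forces emptiness when $F(\alpha)=-\infty$). So assume $\alpha\in(\alpha^-,\alpha^+)$. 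For all large $m$ we have $\alpha\in(\alpha_m^-,\alpha_m^+)$ by Lemma~\ref{lem:apm} (the sequence $\alpha_m^-$ decreases to $\alpha^-$ and $\alpha_m^+$ increases to $\alpha^+$), so the displayed lower bound $\dim_{\rm H}\cL(\alpha)\ge F_m(\alpha)$ holds for all such $m$. Letting $m\to\infty$ and invoking Lemma~\ref{lem:Fcon}, which gives $\lim_{m\to\infty}F_m(\alpha)=F(\alpha)$ for $\alpha\in(\alpha^-,\alpha^+)$, we conclude $\dim_{\rm H}\cL(\alpha)\ge F(\alpha)$.

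\textbf{The boundary value $\alpha=\alpha^+$ and the main obstacle.} The delicate point is the behaviour at the right endpoint of the spectrum and, more subtly, ensuring the interval $(\alpha_m^-,\alpha_m^+)$ actually contains $\alpha$ rather than merely accumulating at it. For $\alpha\in(\alpha^-,\alpha^+)$ strictly interior this is automatic from the monotone convergence in Lemma~\ref{lem:apm}: there is $m_0$ with $\alpha_{m_0}^-<\alpha<\alpha_{m_0}^+$ and the same holds for all $m\ge m_0$. For $\alpha=\alpha^+$ one cannot expect $\alpha<\alpha_m^+$ for any $m$, so Proposition~\ref{proclass} does not apply to the subsystems and a separate argument (the w-measure construction announced in the introduction, carried out later in Section~\ref{sec:low}) is needed; this proposition, however, only claims the bound for $\alpha>\alpha^-$ in the interior situation, and indeed the statement should be read together with the nonemptiness of $\cL(\alpha)$, so the genuinely interior range $(\alpha^-,\alpha^+)$ is what is being proved here. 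The main technical care, then, is simply the bookkeeping of which $m$ make $\alpha$ interior to the $m$-th spectrum, combined with the continuity of $F_m$ and the convergence $F_m\to F$; no essentially new estimate beyond Propositions~\ref{proclass},~\ref{convpress} and Lemmas~\ref{lem:apm},~\ref{lem:Fcon} is required.
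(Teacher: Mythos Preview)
Your argument is correct and follows the paper's own proof essentially verbatim: restrict to the hyperbolic subsystems $\Lambda_m$, apply Proposition~\ref{proclass} to get $\dim_{\rm H}\cL(\alpha)\ge F_m(\alpha)$, and pass to the limit via Lemma~\ref{lem:Fcon}. You are in fact more careful than the paper in insisting that both $\alpha>\alpha_m^-$ and $\alpha<\alpha_m^+$ are needed to invoke Proposition~\ref{proclass}, and in flagging that $\alpha=\alpha^+$ falls outside the scope of this argument and is deferred to the w-measure construction in Proposition~\ref{prop:ha}.
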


\begin{proof}
	Denote $H_m(\alpha)\eqdef \cL(\alpha)\cap \Lambda_m$. 
For each exponent $\alpha>\alpha^-$ there exists $m\ge 1$ such that $\alpha>\alpha_m^-$ and hence $\alpha>\alpha_{m'}^-$ for every $m'\ge m$. 
By Proposition~\ref{proclass} we have $F_{f|\Lambda_{m'}}(\alpha)=\dim_{\rm H}H_{m'}(\alpha)$, and we can conclude that
	\[
	 \dim_{\rm H}\cL(\alpha)
	 \ge \sup_{m\ge1}\dim_{\rm H}H_m(\alpha)
	 = \sup_{m\ge1} F_{f|\Lambda_m}(\alpha).
	\]
The application of Lemma~\ref{lem:Fcon} finishes the proof.
\end{proof}

%To obtain the lower bound for the endpoints of the spectrum and for irregular subsets we will construct a special family of measures (which we will call \emph{w-measures}) supported on $\Lambda$ in the following subsection.

%---------------------------------------------------------------------------------------------------------
\subsection{Construction of w-measures and their properties}\label{sec:wme}%---------------------------------------------------------------------------------------------------------

Recall the notation for hyperbolic sub-systems introduced in Section~\ref{sec:hypsub}. 
Given a nondecreasing sequence of positive integers $(n_i)_i$, let $(\mu_i)_i$ be a sequence of certain equilibrium states for potentials $\phi_i$ with respect to $f|\Lambda_{n_i}$.
%We will denote by $h_i\eqdef h_{\mu_i}(f)$ the entropy, by $\chi_i\eqdef \chi(\mu_i)$ the Lyapunov exponent, and by $d_i\eqdef h_i/\chi_i$ the Hausdorff dimension of $\mu_i$.
We denote 
\[
h_i\eqdef h_{\mu_i}(f),\quad
\chi_i\eqdef \chi(\mu_i),\quad 
d_i\eqdef \frac{h_i}{\chi_i}=\dim_{\rm H}\mu_i
.
\]
(Note that the last equality uses a result in~\cite{HofRai:92}.)
We note that the same construction can be performed for an arbitrary, not necessarily non-decreasing, sequence $(n_i)_i$. But this assumption simplifies the exposition.
We will in the following assume that 
\begin{equation}\label{hae}
P_{f|\Lambda_{n_i}}(\phi_i)=0
\end{equation} 
(note that otherwise we can replace $\phi_i$ by $\phi_i-P_{f|\Lambda_{n_i}}(\phi_i)$ without changing the equilibrium state $\mu_i$). 

We now describe the construction of a measure $\mu$, satisfying certain special properties.
Let $(m_i)_i$ be a fast increasing sequence of positive integers. 
We will specify the specific growth speed in the course of this section.
We demand that
\begin{equation} \label{cond05}
\frac {\rho_{m_i}} {\chi_i} \rightarrow 0,
\end{equation}
where $(\rho_m)_m$ is a positive sequence decreasing to $0$ as in~\eqref{t35}.
We define a probability measure $\mu$ on the algebra generated by the cylinders $\Delta_{i_1\ldots i_m}$.
As the beginning of the construction, for cylinders of level $m_1$ we define 
\[
\mu(\Delta_{\omega^{m_1}})\eqdef \mu_1(\Delta_{\omega^{m_1}}).
\]
Given a cylinder of level $m_i$ of positive measure $\mu$, we sub-distribute the measure on its sub-cylinders of level $m_{i+1}$ which intersect $\Lambda_{m_{i+1}}$ in the following way. 
Let
\[
\mu(\Delta_{\omega^{m_i}\tau^{m_{i+1}-m_i}}) 
\eqdef c_{i+1}(\omega^{m_i}) 
        \, \mu(\Delta_{\omega^{m_i}}) 
        \, \mu_{i+1}(\Delta_{\tau^{m_{i+1}-m_i}})
\]
where
\[
c_{i+1}(\omega^{m_i}) = 
\left(
\sum_{\tau^{m_{i+1}-m_i}\colon 
        \omega^{m_i}\tau^{m_{i+1}-m_i} \in \Sigma_{n_{i+1}}} 
\mu_{i+1}(\Delta_{\tau^{m_{i+1}-m_i}})\right)^{-1}
\]
is the normalizing constant. 
For every $m_i<m<m_{i+1}$ let
\[
\mu(\Delta_{\pi^m}) \eqdef
 \sum_{\tau^{m_{i+1}-m}} \mu(\Delta_{\pi^m\tau^{m_{i+1}-m}})\,.
\]
We extend the measure $\mu$ arbitrarily to the Borel $\sigma$-algebra of $\Lambda$.
We call the probability measure $\mu$ a \emph{w-measure} with respect to the sequence $(f|\Lambda_{n_i},\phi_i,\mu_i)_i$. 
We will in the following analyze the precise way in which the Lyapunov exponents and the local entropies of $\mu$ are determined by the asymptotic fluctuations of   the Lyapunov exponents and the local entropies of the equilibrium states $\mu_i$.

As each $\mu_{i+1}$ is an equilibrium state for a uniformly hyperbolic system, it has the Gibbs property~\eqref{gibbs} with some constant $D_{i+1}$. Hence we can conclude that
\begin{multline*}
 D_{i+1}^{-2}\le \\
        \frac{1}{\mu_{i+1}(\Delta_{\omega^{m_i}})}
        \left[ \sum_{\tau^{m_{i+1}-m_i}\colon
         \omega^{m_i}\tau^{m_{i+1}-m_i} \in \Sigma_{n_{i+1}}} 
        \mu_{i+1}(\Delta_{\omega^{m_i} \tau^{m_{i+1}-m_i}}) \right]
        c_{i+1}(\omega^{m_i}) \\
\le D_{i+1}^2.
\end{multline*}
Now observe that $\mu_{i+1}(\Delta_{\omega^{m_i}})^{-1}[\cdots]=1$ and hence
\[
D_{i+1}^{-2} \le c_{i+1}(\omega^{m_i}) \le D_{i+1}^2 .
\]
Notice further that the Gibbs property~\eqref{gibbs} implies that 
\begin{equation}\label{starstarstar}
D_{i+1}^{-3} \le 
\frac{\mu_{i+1}(\Delta_{\omega^{m_i}\tau^{m-m_i}})}
	{\mu_{i+1}(\Delta_{\omega^{m_i}})\mu_{i+1}(\Delta_{\tau^{m-m_i}})}
	\le D_{i+1}^3.
\end{equation}
Hence we obtain for the constructed measure $\mu$
% ... old ... \begin{equation} \label{ki}
%K_i^{-1} \frac {\mu(\Delta_{\omega^{m_i}})} {\mu_{i+1}(\Delta_{\omega^{m_i}})}  \le \frac {\mu(\Delta_{\omega^{m_i} \tau^{m_{i+1}-m_i}})} {\mu_{i+1}(\Delta_{\omega^{m_i} \tau^{m_{i+1}-m_i}})} \le K_i \frac {\mu(\Delta_{\omega^{m_i}})} {\mu_{i+1}(\Delta_{\omega^{m_i}})}
%\end{equation}
 \begin{equation} \label{ki}
D_{i+1}^{-5}
\frac {\mu(\Delta_{\omega^{m_i}})} {\mu_{i+1}(\Delta_{\omega^{m_i}})}  
\le \frac{\mu(\Delta_{\omega^{m_i} \tau^{m_{i+1}-m_i}})} 
	{\mu_{i+1}(\Delta_{\omega^{m_i} \tau^{m_{i+1}-m_i}})} 
\le D_{i+1}^5\frac {\mu(\Delta_{\omega^{m_i}})} {\mu_{i+1}(\Delta_{\omega^{m_i}})}\,.
\end{equation}
Consider now the measure of a cylinder at level $m$ for any $m_i<m<m_{i+1}$. 
Notice that
\[
\mu_{i+1}(\Delta_{\pi^m}) = 
\sum_{\tau^{m_{i+1}-m}} \mu_{i+1}(\Delta_{\pi^m\tau^{m_{i+1}-m}}) \,.
\]
Hence,~\eqref{ki} implies that for any  $m_i<m<m_{i+1}$
\begin{equation}\label{locu}
D_{i+1}^{-5} \frac {\mu(\Delta_{\omega^{m_i}})} {\mu_{i+1}(\Delta_{\omega^{m_i}})}  \le 
\frac{\mu(\Delta_{\omega^{m_i} \tau^{m-m_i}})} 
	{\mu_{i+1}(\Delta_{\omega^{m_i} \tau^{m-m_i}})} \le 
D_{i+1}^5 \frac {\mu(\Delta_{\omega^{m_i}})} {\mu_{i+1}(\Delta_{\omega^{m_i}})} \,.
\end{equation}
Now~\eqref{locu} and~\eqref{starstarstar} imply
and
\begin{equation} \label{ki2}
D_{i+1}^{-8} \le 
\frac {\mu(\Delta_{\omega^{m_i} \tau^{m-m_i}})} {\mu(\Delta_{\omega^{m_i}}) 
\mu_{i+1}(\Delta_{\tau^{m-m_i}})} \le 
D_{i+1}^8
\end{equation}
for any $m_i<m\le m_{i+1}$ and any sequence $\omega^{m_i} \tau^{m-m_i}$ such that $\Delta_{\omega^{m_i} \tau^{m-m_i}}$ intersects $\Lambda_{i+1}$.

Denote
\[
L_m(x) \eqdef \frac 1 m \log \lvert (f^m)'(x)\rvert 
\]
and
\[
H_m(x) \eqdef - \frac 1 m \log \mu(\Delta_m(x)) .
\]
Note that for $m_i<m\le m_{i+1}$ we have
\begin{equation} \label{eqn:l}
L_m(x) = \frac {m-m_i} m L_{m-m_i}(f^{m_i}(x)) + \frac {m_i} m L_{m_i}(x).
\end{equation}
Further, by~\eqref{ki2} for $m_i<m\le m_{i+1}$ we can estimate
\begin{equation} \label{eqn:h}
\left\lvert H_m(x) + \frac{1}{m} \log \mu_{i+1}(\Delta_{m-m_i}(f^{m_i}(x))) 
	- \frac {m_i} m H_{m_i}(x)\right\rvert
\le \frac {8\log D_{i+1}} {m_i} .
\end{equation}

\begin{proposition} \label{cor1}
	For any $\varepsilon>0$, denote 
	\begin{multline} \label{cond2}
	A_H(\varepsilon, m_i)\eqdef
	 \Big\{ x\colon
	\left\lvert H_m(x) - \frac{m_i}{m} H_{m_i}(x) - \frac{m-m_i}{m} h_{i+1}\right\rvert  
	\le \varepsilon \, \lvert h_{i+1}-h_i\rvert\\
	 \text{ for every }m_i<m\le m_{i+1}\Big\}.
	\end{multline}
	Then for any $\delta>0$ there exists $M(\delta)\ge1$ such that for $m_i>M(\delta)$
	\[
	\mu(A_H(\varepsilon, m_i))> 1- \delta
	\]
\end{proposition}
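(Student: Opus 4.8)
The plan is to control the measure of the "bad" set $\Lambda\setminus A_H(\varepsilon,m_i)$ via a Borel–Cantelli-type estimate coming from the Gibbs property of $\mu_{i+1}$. First I would rewrite the quantity inside the absolute value in \eqref{cond2}. Using \eqref{eqn:h}, up to an error $\tfrac{8\log D_{i+1}}{m_i}$ the expression $H_m(x)-\tfrac{m_i}{m}H_{m_i}(x)$ equals $-\tfrac1m\log\mu_{i+1}(\Delta_{m-m_i}(f^{m_i}(x)))$; since the growth sequence $(m_i)_i$ is at our disposal, we arrange $m_i$ large enough that $\tfrac{8\log D_{i+1}}{m_i}\le \tfrac{\varepsilon}{2}\lvert h_{i+1}-h_i\rvert$ (this is the place where the "fast increasing" hypothesis on $(m_i)_i$ is used; it must grow fast relative to $D_{i+1}$). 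So it suffices to show that, with $\mu_{i+1}$-probability close to $1$ in the $f^{m_i}$-pushed-forward coordinate, one has
\[
\left\lvert -\frac1m\log\mu_{i+1}(\Delta_{m-m_i}(y)) - \frac{m-m_i}{m}h_{i+1}\right\rvert\le \frac{\varepsilon}{2}\lvert h_{i+1}-h_i\rvert
\]
for all $m_i<m\le m_{i+1}$, where $y=f^{m_i}(x)$; note $\tfrac{m-m_i}{m}h_{i+1}$ is essentially $\tfrac{m-m_i}{m_{i+1}}h_{i+1}$ up to a controllable factor, but in fact it is cleaner to keep $\tfrac{m-m_i}{m}$ as written.

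The key point is that $\mu_{i+1}$ is a Gibbs measure for a Hölder potential on the uniformly hyperbolic system $f|\Lambda_{n_{i+1}}$, hence the Shannon–McMillan–Breiman convergence $\tfrac1k\log\mu_{i+1}(\Delta_k(y))\to -h_{i+1}$ holds $\mu_{i+1}$-a.e., and by Egorov's theorem this convergence is uniform off a set of small $\mu_{i+1}$-measure. More precisely I would fix $\delta>0$, apply Egorov to get a set $G_{i+1}\subset\Lambda_{n_{i+1}}$ with $\mu_{i+1}(G_{i+1})>1-\delta$ and an integer $k_0(i+1,\delta)$ such that $\lvert\tfrac1k\log\mu_{i+1}(\Delta_k(y))+h_{i+1}\rvert\le \tfrac{\varepsilon}{4}\lvert h_{i+1}-h_i\rvert$ for all $y\in G_{i+1}$ and all $k\ge k_0$. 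Choosing $m_i$ (equivalently $M(\delta)$) large enough that $m_{i+1}-m_i\ge k_0$ is automatic once $m_i>M(\delta)$ and $(m_i)$ grows; for the finitely many small values $m_i<m\le m_i+k_0$ the factor $\tfrac{m-m_i}{m}$ is at most $\tfrac{k_0}{m_i}$, which we again force to be $\le\tfrac{\varepsilon}{4}\lvert h_{i+1}-h_i\rvert/(\sup|\log\mu_{i+1}(\Delta_1)| )$ by taking $m_i$ large — so those terms are harmless. Finally, since $\mu$ restricted to level-$m_i$ cylinders and then renormalised agrees with $\mu_{i+1}$ up to the bounded factor $D_{i+1}^{\pm 5}$ by \eqref{ki}, the $\mu$-measure of $\{x: f^{m_i}(x)\in G_{i+1}\}$ is at least $1-D_{i+1}^{10}\delta$ — wait, this is not good enough; instead one works cylinder by cylinder: by \eqref{ki} each level-$m_i$ cylinder $\Delta_{\omega^{m_i}}$ of positive $\mu$-measure has its sub-distribution comparable to $\mu_{i+1}(\cdot\mid\Delta_{\omega^{m_i}})$, and the SMB statement for $\mu_{i+1}$ localises to such conditional measures up to the Gibbs constant, so summing over $\omega^{m_i}$ gives $\mu(A_H(\varepsilon,m_i))>1-\delta'$ with $\delta'\to0$ as $\delta\to0$. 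Relabelling $\delta'$ as $\delta$ completes the proof.

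The main obstacle is bookkeeping the two competing large-parameter requirements: $m_i$ must be large enough that (i) the distortion/Gibbs errors $\tfrac{\log D_{i+1}}{m_i}$ and the "short block" terms $\tfrac{k_0}{m_i}$ are absorbed into $\tfrac{\varepsilon}{2}\lvert h_{i+1}-h_i\rvert$, and (ii) simultaneously $m_{i+1}-m_i$ exceeds the Egorov threshold $k_0(i+1,\delta)$. Since in the w-measure construction we are free to choose $(m_i)_i$ as fast-growing as we wish, and since for fixed $i$ the constants $D_{i+1}$, $h_i$, $h_{i+1}$, $k_0(i+1,\delta)$ are all determined before $m_i$ is chosen, both requirements can be met; one should state explicitly that the choice of the sequence $(m_i)$ is made recursively so that these inequalities hold for all $i$, and that $M(\delta)$ is then simply the least $m_i$ for which the Egorov estimate at level $i+1$ has kicked in. A minor subtlety worth a line: the quantity $\lvert h_{i+1}-h_i\rvert$ could in principle vanish, in which case the statement is trivially arranged by making all the errors $0$ — but more honestly one takes the right-hand side of \eqref{cond2} to be interpreted with the convention that when $h_{i+1}=h_i$ the claim still follows from the pure SMB estimate with an additive $\varepsilon$; I would simply note that the sequence $(\mu_i)$ in all our applications is chosen with $h_{i+1}\ne h_i$ infinitely often, or replace $\lvert h_{i+1}-h_i\rvert$ by $\lvert h_{i+1}-h_i\rvert+\eta_i$ for an auxiliary $\eta_i\to0$; this does not affect later use of the proposition.
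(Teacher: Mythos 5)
Your proposal is correct and follows essentially the same route as the paper's proof: reduce via \eqref{eqn:h} to a uniform (Egorov-type) Shannon--McMillan--Breiman estimate for $\mu_{i+1}$ (the paper phrases this as Birkhoff for $\phi_{i+1}$ combined with the Gibbs property, which is equivalent here), and transfer back to $\mu$ cylinder by cylinder through the comparison \eqref{ki}--\eqref{ki2}. Your explicit treatment of the short blocks $m-m_i<k_0$ and of the degenerate case $h_{i+1}=h_i$ is slightly more careful than the paper's, but the argument is the same.
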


\begin{proof}
Fix some $\varepsilon>0$ and $\delta>0$.
We will prove that the set $A_H(\varepsilon, m_i)\cap \Delta_{\omega^{m_i}}$ has measure greater than $1-\delta \,\mu(\Delta_{\omega^{m_i}})$ for all $\omega^{m_i}$ when $m_i$ is big enough.
Denote 
\[
C(\varepsilon, m_i, \omega_i) =
 f^{m_i}(A_H(\varepsilon, m_i)\cap \Delta_{\omega^{m_i}}).
\]
As this set is a union of cylinders of level at most $m_{i+1}-m_i$, by~\eqref{ki2} it is enough to prove that $\mu(C(\varepsilon, m_i, \omega_i)) \ge 1-D_{i+1}^{-8} \delta$ uniformly in $\omega^{m_i}$ for $m_i$ big enough.

By~\eqref{eqn:h}, we obtain
\begin{multline} \label{eqn:mid}
	\left\lvert H_m(x) - \frac{m_i}{m} H_{m_i}(x) 
	- \frac{m-m_i}{m} h_{i+1}\right\rvert \\
	\le \frac {8\log D_{i+1}}{m_i} 
	+ \frac{m-m_i}{m} 
	\left\lvert h_{i+1} + \frac{1}{m-m_i} \log \mu_{i+1} (\Delta_n(y))\right\rvert 
%	\left\lvert H_m(x) - \frac{m_i}{m_i+n} H_{m_i}(x) 
%	- \frac{n}{m_i+n} h_{i+1}\right\rvert \\
%	\le \frac {8\log D_{i+1}}{m_i} 
%	+ \frac{n}{m_i+n} 
%	\left\lvert h_{i+1} + \frac{1}{n} \log \mu_{i+1} (\Delta_n(y))\right\rvert 
\end{multline}
for $y=f^{m_i}(x)$.
%Let $\phi_{i+1}$ be the potential generating $\mu_{i+1}$, normalized so that its pressure equals 0.
From the Gibbs property of the measure $\mu_{i+1}$ we obtain that 
\[
\left\lvert \log\mu_{i+1}(\Delta_n(y)) -S_n\phi_{i+1}(y)\right\rvert \le \log D_{i+1} 
\]
for any $n\ge 1$.
Thus, the right hand side of~\eqref{eqn:mid} is not greater than
\[
W\eqdef
%\frac {8\log D_{i+1}} {m_i} + \frac{1}{m_i+n} \left\lvert n h_{i+1} + S_n \phi_{i+1}(y) \right\rvert .
\frac {9\log D_{i+1}} {m_i} + 
\frac{1}{m} \left\lvert (m-m_i)h_{i+1} + S_{m-m_i} \phi_{i+1}(y) \right\rvert .
\]
The first summand is arbitrarily small for big $m_i$.
To estimate the second one we note that it is a consequence of the Birkhoff ergodic theorem and the Egorov theorem that for the given numbers $\varepsilon>0$ and $\delta>0$ there exists $N=N(\varepsilon,\delta)$ such that we have 
\[
\mu_{i+1}\left(\left\{x\colon 
\left\lvert S_n\phi_{i+1}(x) - n \int\phi_{i+1}\,d\mu_{i+1}\right\rvert  
\le n \varepsilon \,\,\,\,\forall n\ge N\right\}\right) \ge 1- \delta.
\]
From~\eqref{hae} and from the fact that $\mu_{i+1}$ is an equilibrium state we conclude that 
\[
h_{i+1} %= -\mu_{i+1}(\phi_{i+1}) 
= -\int\phi_{i+1}\,d\mu_{i+1}.
\]

Hence, for $m_i$ sufficiently big, $W$ is smaller than any constant with arbitrarily big probability.
In particular, for $m_i>N(\varepsilon\lvert h_{i+1} - h_i\rvert, D_{i+1}^{-8} \delta)$ it is smaller than $\varepsilon\lvert h_{i+1} - h_i\rvert$ with probability bigger than $1- D_{i+1}^{-8} \delta$ and the assertion follows.
\end{proof}

\begin{proposition} \label{cor2}
	For any $\varepsilon>0$, denote
	\begin{multline} \label{cond3}
	 A_L(\varepsilon, m_i)\eqdef
	 \Big\{ x \colon
	 \left\lvert  L_m(x) - \frac{m_i}{m} L_{m_i}(x) - \frac{m-m_i}{m} \chi_{i+1}\right\rvert  
	\le \varepsilon \, \lvert \chi_{i+1}-\chi_i\rvert \\
	\text{ for every }m_i<m\le m_{i+1}\Big\}.
	\end{multline}
	Then for any $\delta>0$ there exists $M(\delta)\ge1$ such that for $m_i>M(\delta)$
	\[
	\mu(A_L(\varepsilon, m_i))\ge 1-\delta.
	\]
\end{proposition}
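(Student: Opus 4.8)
The plan is to mimic the proof of Proposition~\ref{cor1} almost verbatim, replacing the local entropy function $H_m$ by the Lyapunov average $L_m$ and the local-entropy Gibbs estimate by the direct distortion estimate~\eqref{eqn:l} together with the tempered distortion property~\eqref{t35}. The key simplification compared with the previous proof is that the analogue of~\eqref{eqn:h} is now the \emph{exact} identity~\eqref{eqn:l}, so no error term of the form $8\log D_{i+1}/m_i$ appears: we have, for $m_i<m\le m_{i+1}$ and $y=f^{m_i}(x)$,
\[
L_m(x) - \frac{m_i}{m}L_{m_i}(x) - \frac{m-m_i}{m}\chi_{i+1}
= \frac{m-m_i}{m}\Bigl(L_{m-m_i}(y) - \chi_{i+1}\Bigr).
\]
Hence it suffices to bound $\lvert L_{n}(y) - \chi_{i+1}\rvert$ uniformly for all $n\ge 1$ on a set of large $\mu$-measure.

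First I would reduce, exactly as in Proposition~\ref{cor1}, to proving that $\mu(A_L(\varepsilon,m_i)\cap\Delta_{\omega^{m_i}})\ge (1-\delta)\,\mu(\Delta_{\omega^{m_i}})$ for every $\omega^{m_i}$ once $m_i$ is large; passing to the image $f^{m_i}(A_L(\varepsilon,m_i)\cap\Delta_{\omega^{m_i}})$ and using~\eqref{ki2} to compare $\mu$ on these sub-cylinders with $\mu_{i+1}$ up to the factor $D_{i+1}^{8}$, this is reduced to showing
\[
\mu_{i+1}\Bigl(\bigl\{y\colon \lvert L_n(y)-\chi_{i+1}\rvert\le \tfrac{m}{m-m_i}\,\varepsilon\,\lvert\chi_{i+1}-\chi_i\rvert\ \text{for all } 1\le n\le m_{i+1}-m_i\bigr\}\Bigr)\ge 1-D_{i+1}^{-8}\delta.
\]
Since $\chi_{i+1}=\chi(\mu_{i+1})=\int\log\lvert f'\rvert\,d\mu_{i+1}$ and $L_n(y)=\frac1n S_n(\log\lvert f'\rvert)(y)$, this is precisely a Birkhoff-plus-Egorov statement for the continuous observable $\log\lvert f'\rvert$ with respect to the ergodic measure $\mu_{i+1}$ on the uniformly hyperbolic system $f|\Lambda_{n_{i+1}}$. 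Concretely: by the ergodic theorem $L_n\to\chi_{i+1}$ $\mu_{i+1}$-a.e., so by Egorov there is $N=N(\varepsilon\lvert\chi_{i+1}-\chi_i\rvert,\,D_{i+1}^{-8}\delta)$ with $\mu_{i+1}(\{\,\lvert L_n-\chi_{i+1}\rvert\le \varepsilon\lvert\chi_{i+1}-\chi_i\rvert\ \forall n\ge N\,\})\ge 1-D_{i+1}^{-8}\delta$; and for the finitely many $n<N$ one uses that $L_n$ is bounded (by $\log\sup_\Lambda\lvert f'\rvert$, finite since $\Lambda$ is compact and $\log\lvert f'\rvert$ continuous), so the factor $\frac{m}{m-m_i}$, which is $\ge 1$ and could a priori be large only when $m$ is very close to $m_{i+1}$, causes no trouble — in fact for the troublesome small $n$ one simply notes $m-m_i\ge 1$ and the left-hand bound reads $\varepsilon\lvert\chi_{i+1}-\chi_i\rvert$ again after absorbing, or one argues as in Proposition~\ref{cor1} that $W$-type remainder is smaller than any prescribed constant with large probability once $m_i$ is big. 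Taking $M(\delta)$ to be this $N$ (plus whatever is needed to make $\rho_{m_i}/\chi_i$ and related quantities small via~\eqref{cond05}) finishes the argument.

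The only mild subtlety — and the step I would be most careful about — is the dependence of the Egorov threshold on $i$ through $D_{i+1}$ and through $\lvert\chi_{i+1}-\chi_i\rvert$, exactly the same bookkeeping issue already handled in Proposition~\ref{cor1}: one invokes Egorov for each $\mu_{i+1}$ separately to produce $M(\delta)$, and it is the \emph{later} choice of the growth speed of $(m_i)_i$ (made "in the course of this section", cf.\ the remark after~\eqref{cond05}) that ensures $m_i>M(\delta)$ can actually be arranged simultaneously with all the other constraints. Since $\log\lvert f'\rvert$ is genuinely continuous and bounded on the compact set $\Lambda$ (hence on every $\Lambda_{n_i}$), there is no integrability obstruction and no need for the Gibbs property of $\mu_{i+1}$ at all here — the proof is in fact strictly easier than that of Proposition~\ref{cor1}, and I would present it as "the proof is analogous to that of Proposition~\ref{cor1}, using~\eqref{eqn:l} in place of~\eqref{eqn:h} and the Birkhoff ergodic theorem applied to the continuous potential $\log\lvert f'\rvert$ and the measure $\mu_{i+1}$, noting that no Gibbs error term appears because~\eqref{eqn:l} is an exact identity."
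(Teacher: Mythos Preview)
Your approach is essentially the same as the paper's: reduce to a Birkhoff--Egorov statement for $\log\lvert f'\rvert$ under $\mu_{i+1}$, transferring between $\mu$ and $\mu_{i+1}$ via~\eqref{ki2}. The paper's two-sentence proof simply notes $\lvert L_m(x)-L_m(y)\rvert\le\rho_m$ for $y\in\Delta_m(x)$, invokes~\eqref{cond05}, and refers back to Proposition~\ref{cor1}.

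One point of emphasis is worth correcting, though. You describe the tempered distortion and~\eqref{cond05} as a parenthetical afterthought and claim the proof is ``strictly easier'' than Proposition~\ref{cor1}. In fact the tempered distortion is the \emph{one} genuinely new ingredient here, and the paper is right to foreground it. The reason is that $H_m(x)=-\tfrac1m\log\mu(\Delta_m(x))$ is constant on $m$-cylinders, so $A_H(\varepsilon,m_i)$ is a union of cylinders of level $m_{i+1}$ and~\eqref{ki2} applies directly --- this is exactly the sentence ``As this set is a union of cylinders\ldots'' in the proof of Proposition~\ref{cor1}. By contrast $L_m(x)=\tfrac1m\log\lvert(f^m)'(x)\rvert$ is \emph{not} constant on $m$-cylinders, so $A_L(\varepsilon,m_i)$ is not a union of cylinders and your phrase ``compare $\mu$ on these sub-cylinders'' does not parse as written. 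The tempered distortion bound $\lvert L_m(x)-L_m(y)\rvert\le\rho_m$ is precisely what lets one replace $A_L$ by a nearby cylinder-measurable set (at the cost of shifting the tolerance by $\rho_m$, which~\eqref{cond05} makes negligible against $\varepsilon\lvert\chi_{i+1}-\chi_i\rvert$), after which the transfer via~\eqref{ki2} and the Birkhoff--Egorov argument go through exactly as you describe. So: right argument, but the distortion step is load-bearing, not cosmetic.
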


\begin{proof}
We note that if $y \in \Delta_m(x)$ then $\lvert L_m(x)-L_m(y)\rvert  \le \rho_m$, where $(\rho_m)_{m\ge 1}$ is the to $0$ decreasing sequence from the tempered distortion property~\eqref{t35}.
Now we can apply~\eqref{cond05} and repeat the same reasoning as in the proof of Proposition~\ref{cor1}.
\end{proof}

For any sequence $(\varepsilon_i)_{i\ge1}$ we can choose a summable sequence $(\delta_i)_{i\ge1}$ and a sequence $(m_i)_{i \ge 1}$ such that Propositions~\ref{cor1} and~\ref{cor2} hold for the constructed measure $\mu$.
In such a situation, by the Borel-Cantelli lemma for $\mu$-almost every $x\in \Lambda$ both~\eqref{cond2} and~\eqref{cond3} are satisfied for all except finitely many $i$.
This leads us to the following proposition.

\begin{proposition} \label{prop:lim}
If $(m_i)_i$ in the construction above increases sufficiently fast, then for $\mu$-almost every $x\in \Lambda$ we have
\begin{itemize}
\item[i)] $\displaystyle \liminf_{m\to\infty} H_m(x) = \liminf_{i\to\infty} h_i$,   
	 $\displaystyle \limsup_{m\to\infty} H_m(x) = \limsup_{i\to\infty} h_i$,
\item[ii)] $\displaystyle\liminf_{m\to\infty} L_m(x) = \liminf_{i\to\infty} \chi_i$, 
	 $\displaystyle\limsup_{m\to\infty} L_m(x) = \limsup_{i\to\infty} \chi_i$, and
\item[iii)] $\displaystyle\liminf_{m\to\infty} \frac {H_m(x)} {L_m(x)} = \liminf_{i\to\infty} d_i$.
\end{itemize}
Moreover,
\[
\dim_{\rm H}\mu \ge \liminf_{i\to\infty} d_i .
\]
\end{proposition}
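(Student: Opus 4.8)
The plan is to show that the statements (i)--(iii) follow essentially from the measure estimates \eqref{ki2}, \eqref{eqn:h}, and from Propositions~\ref{cor1} and~\ref{cor2}, together with a Borel--Cantelli argument, and that the final dimension bound follows from (iii) via the standard relation between lower pointwise dimension and Hausdorff dimension.

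First I would fix a sequence $(\varepsilon_i)_i$ decreasing to $0$ (say $\varepsilon_i=1/i$), apply Propositions~\ref{cor1} and~\ref{cor2} to get, for each $i$, a value $\delta_i$ with $\sum_i\delta_i<\infty$ and a choice of $m_i$ (increasing fast enough, and satisfying \eqref{cond05}) so that $\mu(A_H(\varepsilon_i,m_i))>1-\delta_i$ and $\mu(A_L(\varepsilon_i,m_i))>1-\delta_i$. By Borel--Cantelli, for $\mu$-a.e.\ $x$ there is $i_0(x)$ so that $x\in A_H(\varepsilon_i,m_i)\cap A_L(\varepsilon_i,m_i)$ for all $i\ge i_0(x)$. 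For such an $x$ I would then run the recursion in \eqref{cond2}, \eqref{cond3}: writing $H_{m_{i+1}}(x)$ in terms of $H_{m_i}(x)$ and $h_{i+1}$ with error $\varepsilon_i|h_{i+1}-h_i|$, one telescopes to see that $H_{m_i}(x)$ and the Cesàro-type averages of the $h_j$ differ by something that goes to $0$ relative to the oscillation; the upshot is that $\liminf_i H_{m_i}(x)=\liminf_i h_i$ and $\limsup_i H_{m_i}(x)=\limsup_i h_i$. Then for intermediate $m$, $m_i<m\le m_{i+1}$, the defining inequality of $A_H(\varepsilon_i,m_i)$ says $H_m(x)$ is a convex-combination-type interpolation between $\frac{m_i}{m}H_{m_i}(x)$ and $h_{i+1}$, up to a small error, so no new $\liminf/\limsup$ values are introduced; this gives (i). The same argument with $L$, $\chi_i$ in place of $H$, $h_i$ gives (ii).

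Next, for (iii), the point is that the fast growth of $(m_i)$ forces the ratio $H_m(x)/L_m(x)$ to be asymptotically pinched between the ratios $h_{i+1}/\chi_{i+1}=d_{i+1}$: when $m=m_{i+1}$ both numerator and denominator are dominated (for fast-growing $m_i$) by the contribution of the block from $m_i$ to $m_{i+1}$, which carries entropy $\approx(m_{i+1}-m_i)h_{i+1}$ and log-derivative $\approx(m_{i+1}-m_i)\chi_{i+1}$; for intermediate $m$ one interpolates. So along a.e.\ orbit $\liminf_m H_m(x)/L_m(x)=\liminf_i d_i$. I would be a little careful here that the $\limsup$ need not behave so cleanly (which is why only the $\liminf$ is claimed), because the running ratio $H_{m_i}(x)/L_{m_i}(x)$ can be pulled up by earlier blocks; but the $\liminf$ is controlled because any long block with small $d_{i+1}$ drags the ratio down, and this is exactly what the interpolation inequalities in $A_H,A_L$ encode. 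Finally, the dimension bound: by \eqref{eqn:h}-type estimates and the definition of $H_m$, the lower local dimension satisfies
\[
\underline d_\mu(x)=\liminf_{r\to0}\frac{\log\mu(B(x,r))}{\log r}\ge\liminf_{m\to\infty}\frac{H_m(x)}{L_m(x)}
\]
for $\mu$-a.e.\ $x$ (using that $\Delta_m(x)$ has length comparable to $|(f^m)'(x)|^{-1}$ by tempered distortion \eqref{est2}, and that balls of radius $|\Delta_m(x)|$ contain $\Delta_m(x)$, so $\mu(B(x,r))$ is squeezed appropriately as $r$ ranges between consecutive cylinder scales). Then (iii) gives $\underline d_\mu(x)\ge\liminf_i d_i$ a.e., and the standard mass-distribution / Frostman-type lemma (e.g.\ as in \cite{Pes:96}) yields $\dim_{\rm H}\mu\ge\liminf_i d_i$.

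The main obstacle I anticipate is the bookkeeping in passing from cylinder scales to ball scales and, relatedly, controlling $H_m(x)/L_m(x)$ for $m$ strictly between $m_i$ and $m_{i+1}$ rather than only at the endpoints: one must verify that within such a block the ratio cannot dip below $\min_{j\le i+1}d_j-o(1)$, which requires combining \eqref{eqn:h} for the entropy with \eqref{eqn:l} and the tempered distortion bound $\rho_m\to0$ for the Lyapunov part, and checking the worst case is the ``fresh'' block contribution. Once the interpolation inequalities of $A_H$, $A_L$ are in hand this is routine but needs care so that the error terms (the $8\log D_{i+1}/m_i$ in \eqref{eqn:h} and the $\rho_m$ terms) are genuinely negligible, which is guaranteed by \eqref{cond05} and by choosing $m_i$ large relative to $\log D_i$.
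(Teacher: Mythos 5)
Your proposal is correct and follows essentially the same route as the paper: Borel--Cantelli over the sets $A_H(\varepsilon_i,m_i)\cap A_L(\varepsilon_i,m_i)$, a telescoped recursion showing $H_{m_i}(x)/h_i\to 1$ (and likewise for $L$), interpolation for intermediate $m$ giving the convex-combination bounds whose ratio is pinched below by $\min\{d_i,d_{i+1}\}$, and then the mass-distribution principle via the lower pointwise dimension. The cylinder-to-ball bookkeeping you flag is exactly the step the paper handles by restricting $\mu$ to $A_j$ and comparing $\mu(B(x,r_i))$ with cylinder measures at level $m_i$.
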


\begin{proof}
Choose some $\varepsilon>0$.
Denote by
\[
A_j\eqdef 
\bigcap_{i\ge j} \left( A_H(\varepsilon_i,m_i)\cap A_L(\varepsilon_i,m_i) \right).
\]
the set of points for which~\eqref{cond2} and~\eqref{cond3} are satisfied for all $i\ge j$ for some sequence $(\varepsilon_i)_{i\ge1}$ (which will be specified in the following).
Note that $\{A_j\}$ is an increasing family of sets and that $\bigcup_{j\ge1}A_j$ is of full measure $\mu$.

Clearly $H_{m_1}(x) \in [H^-, H^+]$ for some $0<H^-$, $H^+<\infty$ independent of $x$ (because there are only finitely many cylinders) and
$L_{m_1}(x) \in [L^-, L^+]$ for some $0<L^-$,  $L^+<\infty$ independent of $x$ (because $\log \,\lvert f'\rvert $ is uniformly bounded on $\Lambda$).

Let $j\ge1$ be such that $\mu(A_j)>0$. 
Given $x\in A_j$, by~\eqref{cond2} we have
\begin{multline*}
\left\lvert H_{m_{j+k+1}}(x)-h_{j+k+1}\right\rvert  \\
\le \left(\frac {m_{j+k}} {m_{j+k+1}} 
	+ \varepsilon_{j+k}\right) \lvert h_{j+k+1} - h_{j+k}\rvert  
	+ \frac {m_{j+k}} {m_{j+k+1}} \lvert H_{m_{j+k}}(x)-h_{j+k}\rvert ,
\end{multline*}
and hence for every $\ell\ge j$ we obtain
\[
\left\lvert \frac{H_{m_{\ell+1}}(x)}{h_{\ell+1}}-1\right\rvert  
\le \left(\frac {m_\ell} {m_{\ell+1}} 
	+ \varepsilon_\ell\right) \left\lvert \frac{h_\ell}{h_{\ell+1}} - 1 \right\rvert  
	+ \frac {m_\ell} {m_{\ell+1}} \left\lvert \frac{H_{m_\ell}(x)}{h_\ell}- 1 \right\rvert 
		\frac{h_\ell}{h_{\ell+1}} .
\]
Thus, if $\varepsilon_i$ is sufficiently small %in comparison to $\min(h_i/h_{i+1}, h_{i+1}/h_i)$ 
and if $m_i$ grows fast enough then we obtain that
\[
\frac {H_{m_i}(x)} {h_i} \rightarrow 1
\]
as $i\to \infty$ uniformly in $x\in A_j$.

Let $I=I(j)$ be sufficiently big such that for all $x\in A_j$ and all $i>I$ we have
\[
\frac {H_{m_i}(x)} {h_i} \in (1-\varepsilon, 1+\varepsilon).
\]
By~\eqref{cond2},  for all $m_i<m\le  m_{i+1}$ we have then
\begin{equation} \label{res1}
\left\lvert H_m(x) - \frac {m_i} m h_i - \frac{m-m_i}{m} h_{i+1}\right \rvert  
%\le \varepsilon_i \,\lvert h_{i+1}-h_i\rvert  + \varepsilon \frac {m_i} m 
\le  \varepsilon_i \,\lvert h_{i+1}-h_i\rvert  + \varepsilon .
\end{equation}
Using~\eqref{cond3} instead of~\eqref{cond2}, we can, possibly after changing $(\varepsilon_i)_i$ and $(m_i)_i$, prove in an analogous way that for all $m_i<m\le  m_{i+1}$ we have
\begin{equation} \label{res2}
\left\lvert L_m(x) - \frac{m_i}{m} \chi_i - \frac{m-m_i}{ m} \chi_{i+1}\right\rvert
  \le \varepsilon_i \,\lvert \chi_{i+1}-\chi_i\rvert  + \varepsilon. % \frac{m_i}{m}.
\end{equation}
Notice that
\[
 h(m)\eqdef\frac{m_i}{m} h_i + \frac{m-m_i}{m} h_{i+1} 
\]
satisfies $h_i \le h(m)\le h_{i+1}$, which implies claim i) of the assertion.
Claim ii) follows from~\eqref{res2} in an analogous way together with
\[
 \chi(m)\eqdef\frac{m_i}{m} \chi_i + \frac{m-m_i}{m} \chi_{i+1}.
\] 
satisfying $\chi_i\le \chi(m) \le \chi_{i+1}$.
%As we have $h_i\le h(m)\le h_{i+1}$, inequality~\eqref{res1} 
As 
\[ 
\frac{h(m)}{\chi(m)} \ge \min\{d_i, d_{i+1}\}
\]
for all $m_i<m\le m_{i+1}$, claim iii) of the assertion follows from~\eqref{res1} and~\eqref{res2}.

We finally prove the lower bound on the Hausdorff dimension of $\mu$.
%We have some $N=N(j)$ such that 
For all $x\in A_j$ and for all $i\ge I$ we have
\[
H_{m_i}(x) \ge (1-\varepsilon) h_i
\]
and
\[
L_{m_i}(x) \le (1+\varepsilon) \chi_i.
\]
Let $\widetilde\mu_j$ be the restriction of $\mu$ to $A_j$. 
For all $x\in A_j$ we have
\[
\widetilde\mu_j(\Delta_m(x)) \le 
\mu(\Delta_{m_i}(x)) 
= e^{- m_i H_{m_i}(x)}
\le e^{-m_i(1-\varepsilon) h_i}
\]
and
\[
\lvert \Delta_{m_i}(x)\rvert  \ge 
e^{-m_i \rho_{m_i}}e^{-m_i L_{m_i}(x)} \ge 
e^{-m_i (1+2\varepsilon) \chi_i}
\]
for $m_i$ big enough, where we use~\eqref{cond05} to obtain the second inequality.
Let 
\[
r_i \eqdef e^{-m_i (1+2\varepsilon) \chi_i}.
\] 
We obtain
\[
\widetilde\mu_j(B(x,r_i)) \le 2 e^{-m_i(1-\varepsilon) h_i},
\]
and hence the lower pointwise dimension of $\widetilde\mu_j$ at $x$ is bounded by
\[
\underline d_{\widetilde\mu_j}(x) \ge (1-3\varepsilon) \liminf_{i\to\infty} d_i.
\]
Since $\varepsilon$ was arbitrary, we obtain $\underline d_{\widetilde\mu_j}(x)\ge \liminf_{i\to\infty} d_i$ for every $x\in A_j$. Thus, we can conclude that $\dim_{\rm H} \mu \ge\liminf_{i\to\infty} d_i$.
\end{proof}

%---------------------------------------------------------------------------------------------------------
\subsection{The boundary of the spectrum -- regular and irregular points}
%---------------------------------------------------------------------------------------------------------

The considerations in the previous section prove the following theorem.

\begin{theorem} \label{thm:meas}
Let $(n_i)_i$ be a nondecreasing sequence of positive integers and $(\mu_i)_i$ be a sequence of equilibrium states for $f|\Lambda_{n_i}$. %of Lyapunov exponents $\chi_i$ and Hausdorff dimensions $d_i$.
Then %the set of points with upper Lyapunov exponent $\limsup \chi_i$ and lower Lyapunov exponent $\liminf \chi_i$ has Hausdorff dimension not smaller than $\liminf d_i$.
\[
\dim_{\rm H}\left\{ x\colon \underline\chi(x) = \liminf_{i\to\infty}\chi(\mu_i),\quad
		\overline\chi(x)=\limsup_{i\to\infty}\chi(\mu_i) \right\} 
\ge \liminf_{i\to\infty} \dim_{\rm H}\mu_i.		
\]
\end{theorem}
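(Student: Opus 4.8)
The plan is to realize the set described in the statement as (essentially) the support of a w-measure and apply Proposition~\ref{prop:lim}. Given the nondecreasing sequence $(n_i)_i$ and the equilibrium states $(\mu_i)_i$ for $f|\Lambda_{n_i}$, I first normalize: replacing the defining potential $\phi_i$ by $\phi_i-P_{f|\Lambda_{n_i}}(\phi_i)$ does not change the equilibrium state, so I may assume~\eqref{hae} holds. With the notation $h_i=h_{\mu_i}(f)$, $\chi_i=\chi(\mu_i)$, $d_i=h_i/\chi_i=\dim_{\rm H}\mu_i$ in place, I then choose a fast-increasing sequence $(m_i)_i$ of cylinder levels so that~\eqref{cond05} holds and so that, additionally, the sequence $(\varepsilon_i)_i$ controlling Propositions~\ref{cor1} and~\ref{cor2} is summable and $m_i/m_{i+1}\to 0$ fast enough that the uniform-convergence argument in the proof of Proposition~\ref{prop:lim} goes through. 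This is possible since at each stage the required smallness conditions involve only finitely much previously fixed data. I build the w-measure $\mu$ with respect to $(f|\Lambda_{n_i},\phi_i,\mu_i)_i$ exactly as in Section~\ref{sec:wme}.

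Now Proposition~\ref{prop:lim} applies to this $\mu$: for $\mu$-almost every $x\in\Lambda$ we have $\liminf_{m\to\infty}L_m(x)=\liminf_{i\to\infty}\chi_i$ and $\limsup_{m\to\infty}L_m(x)=\limsup_{i\to\infty}\chi_i$, that is, $\underline\chi(x)=\liminf_{i\to\infty}\chi(\mu_i)$ and $\overline\chi(x)=\limsup_{i\to\infty}\chi(\mu_i)$. Denote by $S$ the set in the statement of the theorem, so that $\mu(S)=1$. Moreover, the same proposition gives $\dim_{\rm H}\mu\ge\liminf_{i\to\infty}d_i=\liminf_{i\to\infty}\dim_{\rm H}\mu_i$. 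Since $\dim_{\rm H}$ of a measure is the infimum of the Hausdorff dimensions of full-measure sets, and $\mu(S)=1$, we conclude
\[
\dim_{\rm H}S\ge\dim_{\rm H}\mu\ge\liminf_{i\to\infty}\dim_{\rm H}\mu_i,
\]
which is the claimed inequality.

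I expect the main subtlety to be purely bookkeeping rather than conceptual: one must verify that a single choice of $(m_i)_i$ (and of the auxiliary $(\varepsilon_i)_i$, $(\delta_i)_i$) can be made to satisfy simultaneously all of the constraints~\eqref{cond05}, the summability needed for Borel--Cantelli, and the fast-growth needed for the telescoping estimate $H_{m_i}(x)/h_i\to 1$ uniformly on each $A_j$ in Proposition~\ref{prop:lim}. Since all of these are ``eventually'' conditions that only constrain $m_{i+1}$ in terms of finitely many earlier quantities (the $D_j$, $h_j$, $\chi_j$, $\rho_{m_j}$ for $j\le i+1$), a standard diagonal/recursive choice works, and there is genuinely nothing further to prove once Proposition~\ref{prop:lim} is invoked — the theorem is precisely a restatement of that proposition together with the elementary fact $\dim_{\rm H}\mu\le\dim_{\rm H}(\mathrm{supp}\text{-type set})$ for any full-measure set.
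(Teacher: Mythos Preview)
Your proposal is correct and matches the paper's approach exactly: the paper states that ``the considerations in the previous section prove the following theorem,'' meaning Theorem~\ref{thm:meas} is precisely the summary of the w-measure construction together with Proposition~\ref{prop:lim}. Your added remarks on the recursive choice of $(m_i)_i$, $(\varepsilon_i)_i$, $(\delta_i)_i$ simply make explicit what the paper leaves implicit.
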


We now are able to bound the dimension of irregular points from below. 
We also obtain bounds on the dimension of the level sets at the boundary of the spectrum.

\begin{proposition} \label{prop:ha}
	For $\alpha^-\le\alpha\le\beta\le \alpha^+$ we have
	\[
	\dim_{\rm H}\ccL(\alpha,\beta) \ge \max_{\alpha\leq q\leq \beta} F(q)
	\]
and
	\[
	\dim_{\rm H}\cL(\alpha,\beta) \ge \min_{\alpha\leq q\leq \beta} F(q).
	\]
	In particular, we have
	\[
	\dim_{\rm H}\cL(\alpha)\ge F(\alpha).
	\]
\end{proposition}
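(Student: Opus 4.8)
The plan is to deduce Proposition~\ref{prop:ha} from Theorem~\ref{thm:meas} by constructing, for each target pair $(\alpha,\beta)$ inside the interval $[\alpha^-,\alpha^+]$, an appropriate sequence of equilibrium states $(\mu_i)_i$ on the hyperbolic subsystems $\Lambda_{n_i}$ whose Lyapunov exponents $\chi_i=\chi(\mu_i)$ have prescribed $\liminf$ and $\limsup$, and whose dimensions $d_i=\dim_{\rm H}\mu_i=h_i/\chi_i$ have a controlled $\liminf$. The two assertions for $\ccL$ and $\cL$ will come from choosing the oscillation of $(\chi_i)$ so as to realize, respectively, the best and the worst value of $F$ on $[\alpha,\beta]$, and the statement for $\cL(\alpha)$ is the case $\alpha=\beta$.

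First I would fix $q_{\max}\in[\alpha,\beta]$ attaining $\max_{\alpha\le q\le\beta}F(q)$ and $q_{\min}\in[\alpha,\beta]$ attaining the minimum. For the $\ccL$-bound: pick $\delta>0$ small, choose $m$ large enough that $\alpha^--\delta<\alpha_m^-$ and $\alpha_m^+>\alpha^+-\delta$ (using Lemma~\ref{lem:apm}), and then use Lemma~\ref{lem:Jen} together with Proposition~\ref{proclass} and Lemma~\ref{lem:Fcon} to produce, for each index, an equilibrium state on some $\Lambda_{n_i}$ with Lyapunov exponent as close as we like either to $\alpha$ or to $\beta$, and with dimension as close as we like to $F(q_{\max})$ when the exponent is chosen to be near $q_{\max}$. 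Concretely: build the sequence $(\mu_i)$ so that $\chi_i$ alternates, hitting values converging to $\alpha$ along one subsequence and to $\beta$ along another, while along a third (sparse) subsequence $\chi_i\to q_{\max}$ with $d_i\to F(q_{\max})$; arrange the alternation pattern so that $\liminf_i\chi_i=\alpha$, $\limsup_i\chi_i=\beta$, and $\liminf_i d_i=F(q_{\max})$ (this is possible because $F$ is continuous on $(\alpha^-,\alpha^+)$ and bounded below by $0$, so pushing $\chi_i$ toward the endpoints can only make $d_i$ at worst $0$, never below the value we want as a liminf, once we also interleave the $q_{\max}$-blocks densely enough in the right sense). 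Then Theorem~\ref{thm:meas} gives $\dim_{\rm H}\ccL(\alpha,\beta)\ge F(q_{\max})-o(1)$, and letting $\delta\to0$ finishes it. For the $\cL$-bound, the same scheme but now \emph{every} $\chi_i$ is driven to converge to $q_{\min}$ (so $\liminf=\limsup=q_{\min}$, placing us in $\cL(q_{\min})\subset\cL(\alpha,\beta)$ when $\alpha\le q_{\min}\le\beta$ — wait, we actually need the level set $\{\underline\chi=\alpha,\overline\chi=\beta\}$, so instead let $\chi_i$ oscillate between values near $\alpha$ and near $\beta$ but always keep $d_i$ bounded below by $\min_{\alpha\le q\le\beta}F(q)-o(1)$, which holds automatically since $d_i$ is approximately $F(\chi_i)$ and $\chi_i\in[\alpha-o(1),\beta+o(1)]$), and take $\liminf_i d_i\ge\min_{\alpha\le q\le\beta}F(q)$.

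The technical core, and the step I expect to be the main obstacle, is making the endpoint exponents $\alpha^\pm$ genuinely accessible: Lemma~\ref{lem:Jen} and Proposition~\ref{proclass} only give equilibrium states with exponents in the \emph{open} interval $(\inf_{\nu\in\cM(\Lambda_m)}\chi(\nu),\sup_{\nu\in\cM(\Lambda_m)}\chi(\nu))=(\alpha_m^-,\alpha_m^+)$, and $\alpha_m^\pm$ only converge to $\alpha^\pm$ as $m\to\infty$. So to realize $\underline\chi(x)=\alpha$ exactly when $\alpha=\alpha^-$ (and similarly at $\alpha^+$), one must let the subsystem index $n_i\to\infty$ along the subsequence where $\chi_i\to\alpha^-$, choosing $\chi_i\in(\alpha_{n_i}^-,\alpha_{n_i}^-+\eta_i)$ with $\eta_i\to0$, which is legitimate precisely because $\alpha_{n_i}^-\to\alpha^-$. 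One then checks that along this subsequence $d_i=F_{n_i}(\chi_i)$ stays under control via Lemma~\ref{lem:Fcon} (the convergence $F_{n_i}(\chi_i)\to F(\alpha^-)$ or at least a lower bound), and — crucially — that the growth rate of $n_i$ can still be reconciled with the fast-growth requirement on $(m_i)_i$ from Proposition~\ref{prop:lim} and with condition~\eqref{cond05}. Since the $m_i$ may be chosen after the $n_i$ and $\mu_i$ are fixed, and Proposition~\ref{prop:lim} only asks $(m_i)$ to grow fast enough relative to the data already in hand, there is no circularity; one just records that the construction of Theorem~\ref{thm:meas} is robust under arbitrary (in particular: nondecreasing with $n_i\to\infty$) choices of the subsystem indices, which the remark after Proposition~\ref{cor1} already anticipates.
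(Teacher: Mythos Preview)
Your overall strategy---deduce everything from Theorem~\ref{thm:meas} by manufacturing sequences of equilibrium states on the $\Lambda_{n_i}$ via Lemma~\ref{lem:Jen}, with exponents and dimensions controlled through Proposition~\ref{proclass} and Lemma~\ref{lem:Fcon}---matches the paper's, and your treatment of the $\cL(\alpha,\beta)$ bound and of the endpoint issues is essentially what the paper does (the paper's written proof is terser and only spells out the $\cL(\alpha)$ case explicitly, leaving the oscillating construction for $\cL(\alpha,\beta)$ implicit).

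There is, however, a genuine gap in your $\ccL(\alpha,\beta)$ argument. You claim you can arrange $\liminf_i\chi_i=\alpha$, $\limsup_i\chi_i=\beta$, and simultaneously $\liminf_i d_i=F(q_{\max})$. This is impossible in general: the measures supplied by Lemma~\ref{lem:Jen} satisfy $d_i=F_{n_i}(\chi_i)\approx F(\chi_i)$, so whenever $\chi_i$ is near $\alpha$ (which must happen infinitely often for $\liminf_i\chi_i=\alpha$) you are forced to have $d_i\approx F(\alpha)$, and hence $\liminf_i d_i\le\min\{F(\alpha),F(\beta)\}$, not $F(q_{\max})$. Your parenthetical justification (``pushing $\chi_i$ toward the endpoints can only make $d_i$ at worst $0$, never below the value we want as a liminf'') is backwards: making $d_i$ small along a subsequence is precisely what lowers the liminf, and Theorem~\ref{thm:meas} gives only $\liminf_i d_i$ as the lower bound. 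The fix is much simpler than your proposed three-subsequence construction: once you have $\dim_{\rm H}\cL(q)\ge F(q)$ for every $q\in(\alpha^-,\alpha^+)$ (and by your endpoint argument also at $\alpha^\pm$), just observe that $\cL(q)\subset\ccL(\alpha,\beta)$ for each $q\in[\alpha,\beta]\setminus\{0\}$ directly from the definition of $\ccL$, and take the supremum over $q$. No oscillating sequence is needed for the $\ccL$ bound at all.
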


\begin{proof}

	We consider some sequence $(a_n)_{n\ge n_0}$ of numbers $\alpha_n^-< a_n< \alpha_n^+$ such that $\lim_{n\to\infty}a_n=\alpha$ (by our assumptions, $\alpha_n^- < \alpha_n^+$ for $n$ big enough).  
By Lemma~\ref{lem:Jen}, there exists a sequence $(q_n)_{n\ge1}$ of numbers and a sequence $(\nu_n)_{n\ge1}$ of equilibrium states of the potentials $q_n\log\lvert f'\rvert$ such that
	\[
	P_{f|\Lambda_n}(q_n\log\lvert f'\rvert) = h_{\nu_n}(f)+q_na_n.
	\] 
If we apply~\cite[Theorem 1]{HofRai:92} to each of the hyperbolic sub-systems $f|\Lambda_n$ we obtain that
	\[
	\dim_{\rm H}\nu_n=
	\frac{1}{a_n}\left(P_{f|\Lambda_n}(q_n\log\lvert f'\rvert) -q_na_n\right) \ge
	F_{f|\Lambda_n}(a_n)	
	\]
Note that continuity of the function $F$ and Lemma~\ref{lem:Fcon} together imply that $\lim_{n\to\infty}F_{f|\Lambda_n}(a_n)=F(\alpha)$ for properly chosen $(a_n)$. 
The application of  Theorem~\ref{thm:meas} finishes the proof.
\end{proof}

\begin{proof}[Proof of Theorem~\ref{thm:1}]
The assertions follow from Proposition~\ref{prop:ha} and Proposition~\ref{prop:dimH}.
\end{proof}

\begin{proof}[Proof of  Theorem~\ref{thm:2}]
Recall the definition of $\ccL(\alpha)$ in~\eqref{hich}.
To prove the first part of assertion we  use the fact that 
\[
\Lambda\setminus \cL(0) = \ccL(0,\alpha^+) \cup \bigcup_{n\in \bN} \ccL(2^{-n} \alpha^+) 
%\Lambda\setminus \cL(0) =  \bigcup_{n\in \bN} \ccL(2^{-n}, \alpha^+)
\]
If $\cL(0)\neq \emptyset$, then   $q\mapsto F(q)$ is a non-increasing function and  hence Propositions~\ref{prop:dimH} and~\ref{prop:ga} imply that the Hausdorff dimension of $\Lambda\setminus \cL(0)$ is not greater than $F(0)$.
At the same time, by Proposition~\ref{prop:ha} we have $\dim_{\rm H} \cL(0) \geq F(0)$.
This implies that $\dim_{\rm H}\Lambda=\dim_{\rm H}\cL(0)\ge F(0)$. 

The second part of the assertion of Theorem~\ref{thm:2} we will prove in the following section.
\end{proof}

%--------------------------------------------------------------------------------------------------
\section{Topological entropy}\label{sec:ent}
%--------------------------------------------------------------------------------------------------

We first briefly recall one more concept from the thermodynamic formalism
(for a detailed account we refer to~\cite{Pes:96}). Given a set $Z\subset
\Lambda$ (not necessarily compact nor $f$-invariant) and numbers
$\varepsilon>0$, $n\in \bN$, we denote by $M_\varepsilon(Z,n)$ the maximal
cardinality of a set of points in $Z$ which belong to a
$(n,\varepsilon)$-separated set in $\Lambda$. We define the \emph{upper
  capacitive topological entropy} of $f$ on $Z$ by 
\begin{equation}\label{limi}
\overline{Ch}(f|Z) =\lim_{\varepsilon\to 0}\limsup_{n\to\infty}
\frac{1}{n}\log M_\varepsilon(Z,n).
\end{equation}
Analogously, we define the lower capacitive topological entropy of $f$ on $Z$,
denoted by $\underline{Ch}(f|Z)$, by replacing the limes superior
in~\eqref{limi} with the limes inferior. 
Since $f|\Lambda$ is expansive, it in fact
suffices for sufficiently small $\varepsilon$ to take in~\eqref{limi} only the
limit in $n$. 
We denote by $h(f|Z)$ the \emph{topological entropy} of $f$ on $Z$, but we
refer the reader to~\cite{Pes:96} for its precise definition.
The following properties hold:
\begin{itemize}
\item[1.] $ \underline{Ch}(f|Z_1) \le \underline{Ch}(f|Z_2)$ and
  $\overline{Ch}(f|Z_1) \le \overline{Ch}(f|Z_2)$ whenever $Z_1\subset
  Z_2\subset \Lambda$,
\item [2.]  $h(f|Z)\le \underline{Ch}(f|Z) \le \overline{Ch}(f|Z)$,
\item [3.]  for a countable union $Z=\bigcup_{k\in{\mathcal I}}Z_i$ we have
  $h(f|Z)=\sup_{k\in{\mathcal I}}h(f|Z_i)$.
\end{itemize}
Moreover, when $Z\subset\Lambda$ is $f$-invariant and compact then we have coincidence with
the classical topological entropy with respect to $f|Z$, that is,
\begin{equation*}
 h(f|Z) = \underline{Ch}(f|Z) = \overline{Ch}(f|Z) .
\end{equation*}

Recall that $\Delta_n(x)$ denotes the cylinder $\Delta_{i_1 \ldots i_n}$ containing $x$.

\begin{lemma}\label{lem:ftilde}
  $\cL(0)=\{x\in\Lambda\colon
  \limsup_{n\to\infty}\frac{1}{n}\log \lvert\Delta_n(x)\rvert =0\}$.
\end{lemma}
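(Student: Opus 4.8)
The statement asserts an exact equality of two subsets of $\Lambda$, so I would prove it by establishing inclusion in both directions, exploiting the tempered distortion property~\eqref{t35} to relate the size of the cylinder $\Delta_n(x)$ to the derivative $\lvert(f^n)'(x)\rvert$. The key quantitative link is~\eqref{est2}, which gives
\[
\lvert\Lambda\rvert\,\lvert(f^n)'(x)\rvert^{-1}e^{-n\rho_n}\le\lvert\Delta_n(x)\rvert\le\lvert\Lambda\rvert\,\lvert(f^n)'(x)\rvert^{-1}e^{n\rho_n}.
\]
Taking $\frac1n\log$ of this chain and using that $\rho_n\to0$, one gets
\[
\Bigl\lvert \tfrac1n\log\lvert\Delta_n(x)\rvert + \tfrac1n\log\lvert(f^n)'(x)\rvert\Bigr\rvert \le \rho_n + \tfrac1n\log\lvert\Lambda\rvert \longrightarrow 0.
\]
Hence the two sequences $\frac1n\log\lvert\Delta_n(x)\rvert$ and $-\frac1n\log\lvert(f^n)'(x)\rvert$ differ by a term tending to $0$, so they have the same $\limsup$ and the same $\liminf$.

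**Executing the two inclusions.** First suppose $x\in\cL(0)$, i.e. $\underline\chi(x)=\overline\chi(x)=0$, which forces $\lim_{n\to\infty}\frac1n\log\lvert(f^n)'(x)\rvert=0$. By the displayed asymptotic equivalence, $\lim_{n\to\infty}\frac1n\log\lvert\Delta_n(x)\rvert=0$ as well, so in particular $\limsup_{n\to\infty}\frac1n\log\lvert\Delta_n(x)\rvert=0$ and $x$ belongs to the right-hand set. Conversely, suppose $\limsup_{n\to\infty}\frac1n\log\lvert\Delta_n(x)\rvert=0$. Since cylinders are nested, $n\mapsto\lvert\Delta_n(x)\rvert$ is non-increasing, so $\frac1n\log\lvert\Delta_n(x)\rvert\le0$ for all $n$, which means the $\limsup$ being $0$ actually forces $\lim_{n\to\infty}\frac1n\log\lvert\Delta_n(x)\rvert=0$. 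Transferring back through the equivalence, $\lim_{n\to\infty}\frac1n\log\lvert(f^n)'(x)\rvert=0$, i.e. $\underline\chi(x)=\overline\chi(x)=0$ and $x\in\cL(0)$.

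**The delicate point.** The only subtlety is the inference that $\limsup=0$ upgrades to $\lim=0$ on the right-hand side; this rests on monotonicity of $n\mapsto\lvert\Delta_n(x)\rvert$ (so that each term of $\frac1n\log\lvert\Delta_n(x)\rvert$ is $\le 0$), which is immediate from $\Delta_{n+1}(x)\subset\Delta_n(x)$, together with the lower bound in~\eqref{est2} ruling out the value $-\infty$ — since $\lvert(f^n)'(x)\rvert$ is finite and bounded away from $0$ on the compact set $\Lambda$ only in a temperate sense, one should note $\lvert\Delta_n(x)\rvert>0$ for every $n$ because each cylinder is a nondegenerate subinterval. I would also remark that the analogous identity for each of $\underline\chi$ and $\overline\chi$ separately holds (not just their common value), i.e. $\underline\chi(x)=-\limsup_n\frac1n\log\lvert\Delta_n(x)\rvert$ and $\overline\chi(x)=-\liminf_n\frac1n\log\lvert\Delta_n(x)\rvert$, and that $\cL(0)$ is exactly the set where the former vanishes; the stated form of the lemma then follows because, by monotonicity, $\limsup_n\frac1n\log\lvert\Delta_n(x)\rvert=0$ already implies the liminf is $0$ too. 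No deeper obstacle arises — the whole argument is a direct consequence of tempered distortion and the nesting of cylinders.
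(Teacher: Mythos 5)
Your reduction of the lemma to the asymptotic equivalence $\frac1n\log\lvert\Delta_n(x)\rvert\approx-\frac1n\log\lvert(f^n)'(x)\rvert$ is exactly the paper's route (the paper derives the two-sided bound from the mean value theorem applied to $f^n$ on $\Delta_n(x)$ together with tempered distortion, which is where~\eqref{est2} comes from), and the inclusion $\cL(0)\subset\{x\colon\limsup_n\frac1n\log\lvert\Delta_n(x)\rvert=0\}$ is correct. The converse inclusion is where your argument breaks down, and the failure is precisely at the step you flagged as ``the delicate point'': a sequence of non-positive terms with $\limsup$ equal to $0$ need \emph{not} converge to $0$. Monotonicity of $n\mapsto\lvert\Delta_n(x)\rvert$ gives $\frac1n\log\lvert\Delta_n(x)\rvert\le\frac1n\log\lvert I\rvert$, hence $\limsup\le0$, but it does not make the normalized sequence $\frac1n\log\lvert\Delta_n(x)\rvert$ monotone (the factor $1/n$ destroys monotonicity), so nothing rules out $\liminf_n\frac1n\log\lvert\Delta_n(x)\rvert<0=\limsup_n\frac1n\log\lvert\Delta_n(x)\rvert$.

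Your own identities show that this is a real obstruction and not merely a gap in the write-up: since $\limsup_n\frac1n\log\lvert\Delta_n(x)\rvert=-\underline\chi(x)$ and $\liminf_n\frac1n\log\lvert\Delta_n(x)\rvert=-\overline\chi(x)$, the right-hand set as printed equals $\{x\colon\underline\chi(x)=0\}$, which contains every set $\ccL(\alpha)$ from~\eqref{hich}; these irregular points with zero lower exponent are exactly the objects of Proposition~\ref{prop:ga} and are nonempty (indeed of positive dimension) in the parabolic examples. So the equality with $\limsup$ cannot be salvaged; the correct statement replaces $\limsup$ by $\liminf$ (equivalently by $\lim$), for then the right-hand set is $\{x\colon\overline\chi(x)=0\}$, and since tempered distortion forces $\underline\chi\ge0$ everywhere on $\Lambda$, this coincides with $\cL(0)$. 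The $\liminf$ version is also the one actually needed in Proposition~\ref{pro:ent}, where one requires a lower bound on $\lvert\Delta_n(x)\rvert$ for \emph{all} large $n$. Your first inclusion and the asymptotic equivalence are fine; the proof should end by recording the two displayed identities for $\underline\chi$ and $\overline\chi$ and then invoking $\underline\chi\ge0$, rather than by the (false) upgrade of $\limsup$ to $\lim$.
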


\begin{proof}
  Given $x\in\Lambda$, we have $f^n(\Delta_n(x))\supset I_i$ for some $i\in\{1,\ldots,p\}$. 
  By the mean value theorem there exists $y=y(n)\in\Delta_n(x)$ such that
  $\lvert (f^n)'(y)\rvert\,\lvert\Delta_n(x)\rvert\ge I_i$ and hence
  \[
   \lvert I\rvert^{-1}\le       
  \lvert\Delta_n(x)\rvert^{-1} \le
  \lvert I_i\rvert^{-1}  \lvert(f^n)'(x)\rvert  \frac{\lvert(f^n)'(y)\rvert}{\lvert (f^n)'(x)\rvert}.
  \]
  By the tempered distortion property we obtain
 \[
  \limsup_{n\to\infty}\frac{1}{n}\log\sup_{y\in\Delta_n(x)}
  \frac{\lvert(f^n)'(y)\rvert}{\lvert(f^n)'(x)\rvert} = 0.
  \]
  from here the statement follows.
\end{proof}

\begin{proposition}\label{pro:ent}
  We have $h(f|\cL(0)) = 0$.   
\end{proposition}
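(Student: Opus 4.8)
The plan is to use Lemma~\ref{lem:ftilde}, which identifies $\cL(0)$ with the set of points $x$ for which the cylinders $\Delta_n(x)$ shrink subexponentially along a subsequence, and then to exhibit, for every small $\gamma>0$, an efficient cover of $\cL(0)$ by cylinders witnessing an entropy no larger than $\gamma$. Concretely, since $f|\Lambda$ is expansive, $h(f|Z)$ for any $Z$ can be computed using the cylinder cover rather than arbitrary $(n,\varepsilon)$-balls: for $s\ge 0$ one considers $\sum \lvert\Delta_{i_1\ldots i_n}\rvert$-type sums but with the ``metric'' replaced by the combinatorial weight $e^{-ns}$, i.e. one shows $h(f|Z)\le s$ by covering $Z$, for infinitely many scales, with collections $\cU_n$ of cylinders of level $\ge n$ such that $\sum_{\Delta\in\cU_n} e^{-(\mathrm{level}\,\Delta)s}$ stays bounded. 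So the task reduces to: given $\gamma>0$, produce such covers.

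First I would fix $\gamma>0$ and, for each $x\in\cL(0)$, use Lemma~\ref{lem:ftilde} to pick arbitrarily large $n$ with $\lvert\Delta_n(x)\rvert\ge e^{-n\gamma}$; equivalently, by the tempered distortion property and the mean value theorem as in the proof of Lemma~\ref{lem:ftilde}, $\lvert(f^n)'(x)\rvert\le C e^{n\gamma}$ for a uniform constant $C$. Thus $\cL(0)\subset\bigcap_{N\ge 1}\bigcup_{n\ge N}\cC_n^\gamma$, where $\cC_n^\gamma$ is the (finite) family of level-$n$ cylinders $\Delta_{i_1\ldots i_n}$ on which $\inf\lvert(f^n)'\rvert\le C e^{n\gamma}$ (well-defined since by the tempered distortion property the infimum and supremum of $\lvert(f^n)'\rvert$ on a cylinder differ by a factor $e^{n\rho_n}$). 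The key counting estimate is then an upper bound on $\#\cC_n^\gamma$. Here I would use the conformal measure: for any fixed $d>d_0$ we have $P(\varphi_d)<0$, and by~\eqref{huch} every cylinder $\Delta\in\cC_n^\gamma$ satisfies
\[
\nu_d(\Delta)\ge e^{-nP(\varphi_d)}\bigl(\sup_{\Delta}\lvert(f^n)'\rvert\bigr)^{-d}e^{-n\rho_n}\ge e^{-nP(\varphi_d)}\bigl(Ce^{n(\gamma+\rho_n)}\bigr)^{-d}e^{-n\rho_n},
\]
so, summing over the disjoint cylinders in $\cC_n^\gamma$ and using $\nu_d(\Lambda)=1$,
\[
\#\cC_n^\gamma\le C^{d}\,e^{n(P(\varphi_d)+d\gamma+(1+d)\rho_n)}.
\]
Since $\rho_n\to0$ and $P(\varphi_d)<0$, choosing $d>d_0$ and then $\gamma$ small enough so that $P(\varphi_d)+d\gamma<0$ gives $\#\cC_n^\gamma\le e^{n\eta}$ for some $\eta<0$ and all large $n$; in particular the cover $\bigcup_{n\ge N}\cC_n^\gamma$ of $\cL(0)$ has $\sum_{n\ge N}\#\cC_n^\gamma\,e^{-n\cdot 0}\to 0$, which shows $h(f|\cL(0))\le 0$, hence $=0$.

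The main obstacle, and the point needing care, is the case distinction on the sign of $P(\varphi_d)$: the clean counting bound above needs $d>d_0$, but $d_0$ could equal $\dim_{\rm H}\Lambda$ and one must make sure such $d$ exists with $P(\varphi_d)<0$ — this is exactly the content of the Proposition in Section~\ref{sec:press} (``$P(\varphi_d)$ is negative for large $d$ if and only if there are no invariant measures with zero exponent''), together with the observation that if \emph{every} large $d$ gives $P(\varphi_d)\ge0$ (the parabolic case) then there \emph{is} an invariant measure of zero exponent, and one must instead argue directly. In that parabolic situation I would replace the conformal-measure count by a more hands-on argument: the subexponential shrinking forces $x$ to spend asymptotically all of its time, along the subsequence $(n_k)$, in arbitrarily small neighborhoods of the parabolic points (where $\log\lvert f'\rvert$ is arbitrarily small), and the number of length-$n$ itineraries with average of $\log\lvert f'\rvert$ below $\gamma$ grows subexponentially because $\log\lvert f'\rvert$ separates the parabolic fixed points from the rest of the (expanding, finite-entropy) system; this is again quantified by applying~\eqref{huch} with a parameter $d$ for which $P(\varphi_d)$ is arbitrarily close to $0$ and absorbing the error into $\gamma$. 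Either way the conclusion $h(f|\cL(0))=0$ follows, completing the proof of Theorem~\ref{thm:2}.
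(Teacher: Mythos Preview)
Your strategy can be made to work, but it is considerably more elaborate than what is needed, and the parabolic branch is not actually carried out. The paper's argument is entirely elementary and avoids both conformal measures and the case split. Using Lemma~\ref{lem:ftilde} (and the fact that on $\cL(0)$ the Lyapunov exponent genuinely converges to~$0$, not just along a subsequence), one sets $L_{\varepsilon,N}=\{x:\lvert\Delta_n(x)\rvert\ge(1+\varepsilon)^{-n}\text{ for all }n\ge N\}$ and observes $\cL(0)\subset\bigcup_N L_{\varepsilon,N}$. The count of level-$n$ cylinders meeting $L_{\varepsilon,N}$ is then obtained by pure pigeonhole: these cylinders have length at least $(1+\varepsilon)^{-n}$ and are essentially disjoint subintervals of $I$, so there are at most $\lvert I\rvert(1+\varepsilon)^n$ of them. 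Hence $\overline{Ch}(f|L_{\varepsilon,N})\le\log(1+\varepsilon)$, and countable stability of $h$ together with $h\le\overline{Ch}$ gives $h(f|\cL(0))\le\log(1+\varepsilon)$ for every $\varepsilon>0$. No reference to the sign of $P(\varphi_d)$ is needed.

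Concerning your version: in the hyperbolic case your conformal-measure count is fine, but note that you are proving something far stronger than required (exponential \emph{decay} of $\#\cC_n^\gamma$), which is precisely what fails and forces the split. In the parabolic case your sketch is not a proof: the assertion that the number of itineraries with average $\log\lvert f'\rvert$ below $\gamma$ ``grows subexponentially'' is false as stated (it grows like $e^{n d_0\gamma}$, not $e^{o(n)}$), and the appeal to orbits lingering near parabolic points is heuristic. What \emph{would} work is exactly the estimate you wrote down, now with $d=d_0$ so that $P(\varphi_{d_0})=0$: then $\#\cC_n^\gamma\le C\,e^{n(d_0\gamma+o(1))}$, which gives $h(f|\cL(0))\le d_0\gamma$ for every $\gamma>0$ via the Bowen sum at level $s>d_0\gamma$ (not at $s=0$ as in your hyperbolic branch). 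Had you run this uniformly from the start you would have recovered, up to the constant $d_0$, the paper's pigeonhole bound---which in hindsight is all that was ever needed.
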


\begin{proof}
  We are going to show the existence of a decreasing sequence of sets
  $\cL_k$, all containing $\cL(0)$, such that $\overline{Ch}(f|
  \cL_k)$ decreases to $0$. 

  Given $\varepsilon>0$ and $N\in\bN$ we define the set
  \[
  L_{\varepsilon,N}=
  \{x\in\Lambda\colon \lvert\Delta_n(x)\rvert \ge(1+\varepsilon)^{-n}
        \text{ for every }n\ge N\}\,.
  \]
  Notice that $L_{\varepsilon,N}\subset L_{\varepsilon,N'}$ for $N\le N'$ and
  that by Lemma~\ref{lem:ftilde}
  \[
 \cL(0) \subset \bigcap_{\varepsilon>0}\bigcup_{N\in\bN}L_{\varepsilon,N}.
  \]
  For every $x\in L_{\varepsilon,N}$ we have $\lvert\Delta_n(x)\rvert \ge
  (1+\varepsilon)^{-n}$ for every $n\ge N$. Hence, the number of
  $(n,\varepsilon)$-separated sets needed to cover the set $L_{\varepsilon,N}$
  is at most $ I (1+\varepsilon)^n$. From the definition of
$\overline{Ch}$ we obtain $\overline{Ch}(f|L_{\varepsilon,N})\le
  \log(1+\varepsilon)$. Now it follows that 
  \[
  h(f| \cL(0)) \le 
  h(f|\bigcup_{N\in\bN}L_{\varepsilon,N}) = 
  \sup_{N\in\bN}h(f|L_{\varepsilon,N}) \le 
  \sup_{N\in\bN}\overline{Ch}(f|L_{\varepsilon,N}) \le \log(1+\varepsilon)\,.
  \]
  Since $\varepsilon$ is arbitrary, we can conclude $h(f| \cL(0))=0$. 
\end{proof}

%--------------------------------------------------------------------------------------------
\bibliographystyle{amsplain}

%TO BE INCLUDED POSSIBLY:

%--- $C^2$ -> $C^2$ on $\Lambda\setminus pp$

%--- properties of $F(\alpha)$ (concave, if $F(0)\neq -\infty$ then nondecreasing, plus this bit about when it is strictly decreasing)

\end{document}